\documentclass[11pt]{article}
\usepackage{mathptmx}
\setlength{\parindent}{0.2in}
\usepackage{authblk}
\usepackage[T1]{fontenc}
\usepackage{inputenc}
\usepackage{babel}
\usepackage{array}
\usepackage{float}
\usepackage{textcomp}
\usepackage{multirow}
\usepackage{graphicx}
\usepackage{natbib}
\usepackage{hyperref}
\usepackage{graphicx} 
\usepackage{amsmath}
\usepackage{amssymb}
\usepackage{setspace}
\usepackage[linesnumbered,ruled,vlined]{algorithm2e}
\usepackage[most]{tcolorbox}
\usepackage{geometry}
\usepackage{soul}
\usepackage{empheq}
\usepackage{moresize}
\allowdisplaybreaks

\usepackage{amsthm}
\usepackage{authblk}
 \geometry{
 a4paper,
 left=1in,
 right=0.9in,
 bottom=1in,top=1in
 }
 \hypersetup{
    colorlinks=true,
    linkcolor=blue,
    filecolor=blue,      
    urlcolor=cyan,
    citecolor=blue,
    pdftitle={Overleaf Example},
    pdfpagemode=FullScreen,
    }
\newtheoremstyle{exampstyle1}
  {8pt} 
  {8pt} 
  {} 
  {} 
  {\bfseries} 
  {.} 
  {3pt} 
  {} 
\urlstyle{same}

\newtheorem{theorem}{Theorem}[section]
\newtheorem{lemma}[theorem]{Lemma}

\theoremstyle{exampstyle1}\newtheorem{assumption}{Assumption}
\newtheorem{definition}{Definition}[subsection]
\newtheorem{Remark}{Remark}
\title{Statistical inference using debiased group graphical lasso for
multiple sparse precision matrices}
\vspace{1cm}
\author{Sayan Ranjan Bhowal\thanks{Corresponding author: Theoretical Statistics and Mathematics Unit, Indian Statistical Institute, Kolkata} \;  Debashis Paul\thanks{Applied Statistics Division, Indian Statistical Institute, Kolkata} \; Gopal K Basak\thanks{Theoretical Statistics and Mathematics Unit, Indian Statistical Institute, Kolkata} \; Samarjit Das\thanks{Economic-Research Unit, Indian Statistical Institute, Kolkata}}

\date{}

\onehalfspacing
\begin{document}
\maketitle
\begin{abstract}
    Debiasing group graphical lasso estimates enables statistical inference when multiple Gaussian graphical models share a common sparsity pattern. We analyze the estimation properties of group graphical lasso, establishing convergence rates and model selection consistency under irrepresentability conditions. Based on these results, we construct debiased estimators that are asymptotically Gaussian, allowing hypothesis testing for linear combinations of precision matrix entries across populations. We also investigate regimes where irrepresentibility conditions does not hold, showing that consistency can still be attained in moderately high-dimensional settings. Simulation studies confirm the theoretical results, and applications to real datasets demonstrate the practical utility of the method.    
\end{abstract}
\vspace{0.5cm}
\noindent {\bf Keywords:} Group graphical lasso, Precision, Sparsity, Debiasing, Gaussian graphical models
\newpage
\section{\large Introduction}
The Gaussian graphical models are the most common graphical models that have gone through intensive research for a long time. Suppose $\boldsymbol{x}=(x_{1},x_{2},\ldots,x_{p})\sim N(\boldsymbol{0},\boldsymbol{\Sigma})$ is a $p$-variate random vector having multivariate Gaussian distribution. Here $\boldsymbol{\Sigma}$ is a positive definite matrix, and we have the precision matrix $\boldsymbol{\Omega}=\boldsymbol{\Sigma}^{-1}$. This precision matrix tells us about the connectivity of the coordinates. Specifically, any two variables, say $x_{i}$ and $x_{j}$ are independent given the other variables if $\boldsymbol{\Omega}_{ij}=0$. In the graphical model, this corresponds to the interpretation that if we assume the coordinates as the vertices of a graphical model, then the vertices $x_{i}$ and $x_{j}$ do not share an edge. If we have $n$ observations, then we have the sample covariance matrix as $\hat{\Sigma}=\frac{1}{n}\boldsymbol{X}^{T}\boldsymbol{X}$, where $\boldsymbol{X}_{p\times n}=(\boldsymbol{x}_{1},\ldots,\boldsymbol{x}_{n})$ is the data matrix. However, the sample covariance matrix often leads to poor estimation of $\boldsymbol{\Sigma}$ when the sample size $n$ is comparable to the dimension $p$. Moreover, in the high-dimensional regime, the sample covariance matrix is not even invertible, and hence, cannot provide a reliable estimate of the precision matrix depending on just the sample covariance matrix.

To solve this problem, the estimation is done via penalized maximum log likelihood optimization. The problem can be written as
\begin{eqnarray}\label{as 1}
\notag
\boldsymbol{\hat{\Omega}} & = & \underset{\boldsymbol{\Omega}\in S_{++}}{arg\ max}\ \{log\ det\ \boldsymbol{\Omega}-trace(\boldsymbol{\Omega}\hat{\Sigma})-\lambda||\boldsymbol{\Omega}^{-}||_{1}\}\\
 & = & \underset{\boldsymbol{\Omega}\in S_{++}}{arg\ min}\ \{-log\ det\ \boldsymbol{\Omega}+trace(\boldsymbol{\Omega}\hat{\Sigma})+\lambda||\boldsymbol{\Omega}^{-}||_{1}\}.
\end{eqnarray}

The idea of using the penalty term is to penalize the off-diagonal elements of $\boldsymbol{\Omega}$, making the solution sparse and positive definite. This optimization is what is known as graphical lasso. The techniques to solve the optimization problem have been discussed in \cite{hastie2015statistical}. Theoretical details of the solution have been studied thoroughly; see e.g.,\cite{fan2009network}, \cite{10.1214/11-EJS631}, \cite{10.1214/12-AOS1041}, \cite{yuan2019constrained}. However, the introduction of the penalty term makes the estimators biased. To do statistical inference, one needs to debias the estimates. \cite{10.1214/15-EJS1031} studied the theoretical properties after debiasing the estimators by inverting the Karush-Kuhn-Tucker (KKT) conditions.

Recently, the study of multiple Gaussian graphical models has attracted the attention of researchers. Estimating the precision matrices of multiple models sharing common characteristics has continued to gain interest. As a result, several attractive estimation methods have been developed. The common characteristic mostly deals with the fact that the zero pattern of the precision matrices remains constant throughout the models. Incorporating the additional information about the commonality structure improves the joint estimation of the precision matrices rather than separately estimating them. \cite{guo2011joint} proposed decomposing each precision matrix into the Schur-Hadamard product of two matrices, where one matrix remains constant throughout the models, while the other remains variable. The objective is to estimate the matrices so that each of the matrices is as sparse as possible. \cite{danaher2014joint} introduced the fused graphical lasso and group graphical lasso methods. The fused graphical lasso deals with the estimation of the precision matrices that should not be too different from each other, whereas the group graphical lasso estimates the precision matrices in a way that the sparsity remains intact within the models. \cite{lee2015joint} decomposed the precision matrices into the sum of two matrices, where one remains constant throughout the models and the other changes. They used CLIME estimation and hence can also be used for non-Gaussian data. \cite{ma2016joint} proposed jointly estimating the precision matrices from the neighborhood selection method, introduced by \cite{10.1214/009053606000000281}. Here, each coordinate can be represented as a regression model of the other coordinates, and the regression coefficients represent the dependence of the coordinate on the others. Several other methods have been developed over recent years in jointly estimating the precision matrices over multiple models. We refer to the survey paper by \cite{tsai2022joint} for an overview of the methods that have been proposed till date.

Though estimation of sparse precision matrix has been studied, research on statistical inference remains limited. \cite{10.1214/13-AOS1169} developed a method by using a debiased estimate of the partial correlation coefficient to test whether there is significant conditional dependence on two coordinates for a single model. They used FDR control techniques from \cite{efron2007correlation} to select the cut-off for the multiple testing of dependence of all pairs of coordinates. Statistical inference of two networks was introduced by \cite{xia2015testing}, and later, multiple networks were studied by \cite{10.1214/17-AOS1539} based on the same idea of using the debiased estimate of the partial correlation coefficient. However, studies of statistical inference based on estimates obtained from penalized maximum likelihood equations are much fewer. Recently \cite{zhang2024application} addressed the problem of applying fused graphical lasso to infer multiple sparse precision matrices. However, their work is not truly high-dimensional; rather, their work is moderately high-dimensional, where the dimension increases but at a slower rate than the sample size. In this work, we develop debiased estimates of true precision matrices from group graphical lasso, and then we obtain a test statistic based on the debiased estimates for statistical inference. We focus on the regime where we assume our precision matrix of each population satisfies the irrepresentability condition and the between-group irrepresentability condition, and also exhibits a similar sparsity pattern across the populations. Violations of the shared sparsity pattern complicate the strict dual feasibility conditions. However, we shall also address the situation without the assumption of irrepresentability and between-group irrepresentability condition, and without a similar sparsity pattern across the populations. Without these assumptions, we demonstrate consistency only in the moderately high-dimensional regime, where the dimension grows at a rate much slower than the sample size. To our knowledge, debiased estimates of group graphical lasso have not yet been developed or worked with. 

The rest of the paper is organized as follows. Subsection \ref{Subsection 1.2} introduces the model setup along with the group graphical lasso estimators and the assumptions that shall be worked with throughout the paper. Convergence of the precision matrices shall be examined in Section \ref{Section 2}. Section \ref{Section 3} illustrates the methods and theories for statistical inference using the group graphical lasso estimators. Section \ref{Section 4} is dedicated to the analysis of the group graphical lasso estimators without assuming the irrepresentability conditions, the between-group irrepresentability conditions, and a similar sparsity pattern of the population precision matrices. Section \ref{Section 5} verifies the theoretical properties using synthetic and real-world datasets.
\subsection{\normalsize Notations}\label{Subsection 1.1}
Denote $\boldsymbol{A}=(A_{ij})_{i,j=1}^{p}$ as a matrix, where $A_{ij}$ denote the $(i,j)^{th}$ entry of $\boldsymbol{A}$. We write $trace(\boldsymbol{A})$ to define the trace of a matrix$\boldsymbol{A}$, while $det(\boldsymbol{A})$ denotes the determinant of the matrix $\boldsymbol{A}$. We also denote $\boldsymbol{A}^{+}$ as $diag(\boldsymbol{A})$, whereas $\boldsymbol{A}^{-}=\boldsymbol{A}-\boldsymbol{A}^{+}$, the matrix containing only the off-diagonal elements of $\boldsymbol{A}$. For any vector $\boldsymbol{x}\in\mathbb{R}^{p}$, we denote the $d$-norm of $\boldsymbol{x}$, by $||\boldsymbol{x}||_{d}$, $d\in(0,\infty]$. We use $||\boldsymbol{A}||_{F}^{2}=\sum_{i,j}A_{ij}^{2}$, $||\boldsymbol{A}||_{\infty}=\underset{i,j}{max}|A_{ij}|$, $|||\boldsymbol{A}|||_{\infty}=\underset{i}{max}\sum_{j}|A_{ij}|$, and $|||\boldsymbol{A}|||_{1}=|||\boldsymbol{A}^{T}|||_{\infty}$ as the Frobenius norm, supremum norm, $l_{\infty}$-operator norm, and $l_{1}$-operator norm of a matrix $\boldsymbol{A}$. We also denote $||\boldsymbol{A}||_{1}=\sum_{i,j}|A_{ij}|$.

For sequences, we use $f_{n}=O(g_{n})$, if $f_{n}\leq cg_{n}$, for some $c<\infty$, and $f_{n}=\psi(g_{n})$, if $f_{n}>dg_{n}$, for some $d>0$. Then $f_{n}\asymp g_{n}$, if $f_{n}=O(g_{n})$, and $f_{n}=\psi(g_{n})$. The dimension $p$ is comparable to the sample size. We denote the set $\mathcal{S}_{++}^{p}=\{\boldsymbol{\Omega}:\boldsymbol{\Omega}=\boldsymbol{\Omega}^{T},\boldsymbol{\Omega}\succ0\}$as the set of all symmetric positive definite matrices. Moreover, we denote $S_{k}=\{(i,j):\boldsymbol{\Omega}_{ij}^{k}\neq0,i\neq j\}$ as the set of all pairs of coordinates sharing edges between them for the $k^{th}$ model and $s_{k}=|S_{k}|$ as the cardinality of $S_{k}$, representing the number of edges within the graphical model $k$. Here $\boldsymbol{\Omega}^{k}$ denotes the precision matrix for model $k$. Under the assumption that the edge set among the models remains constant, we have $S_k=S$, and $s_k=s$, for all $k=1,2,\ldots, K$. Define $d_{k}=\underset{j=1,2,\ldots,p}{max}|D_{j_{1}}^{k}|$, where $|D_{j_{1}}^{k}|=\{(i,j_{1}):\Omega_{ij_{1}}^{k}\neq0,i\neq j_{1}\}$. If we consider the graphical model of the corresponding Gaussian distribution, then $d_{k}$ represents the highest number of edges any vertex shares with the others, excluding the self-loop of each vertex. In other words, $d_{k}$ is the maximum number of coordinates a particular coordinate shares its conditional dependency with. Since the edge set among the models are same, we have $d_{k}=d$, for all $k=1,2,\ldots, K$.

\subsection{\normalsize Model setup}\label{Subsection 1.2}
In this subsection, we shall define the optimization problem that we should be working with, along with some definitions.
\begin{definition}\label{Definition 1.2.1}
    A random variable $X$ with $\mathbb{E}(X)=0$ is sub-Gaussian if
    \begin{equation}\label{as 2}
        \mathbb{E}(e^{X^2/K_1^2})\leq 2,
    \end{equation}
     for some $K_1>0$.
\end{definition}
 We know what the sub-Gaussian conditions for random vectors corresponding to a single population model are. We now generalize the conditions for multiple population models.
\begin{definition}\label{Definition 1.2.2}
    (Sub-Gaussian condition) Consider $\boldsymbol{x}^{k}=(x_1^k,x_2^k,\ldots,x_p^k)$, with $\mathbb{E}(\boldsymbol{x}^k)=0$, and covariance matrix $\boldsymbol{\Sigma}_0^{k}$, as random vectors from the population model $k,k=1,2,\ldots,K$. Then, $x_i^k/\sqrt{\boldsymbol{\Sigma}_{0ii}^{k}}$,$i=1,2,\ldots,p$, $k=1,2,\ldots,K$ are sub-Gaussian random variables with a common $K_1>0$, for all $k=1,2,\ldots,K$.
\end{definition}
\begin{definition}\label{Definition 1.2.3}
    (Sub-Gaussian vector condition) $\boldsymbol{x}^{k}$, with $\mathbb{E}(\boldsymbol{x}^k)=0$, and covariance matrix $\boldsymbol{\Sigma}_0^{k}$, from the population model $k$ satisfies the sub-Gaussianity vector condition if we have a constant $K_1>0$ such that
    \begin{equation}\label{as 3}
        \underset{\boldsymbol{a}\in \mathbb{R}^p:||\boldsymbol{a}||_2\leq 1}{sup}\mathbb{E}(e^{|\boldsymbol{a}'\boldsymbol{x}^{k}|^2/K_1^2})\leq2,
    \end{equation}
     for all $k=1,2,\ldots,K$.
\end{definition}
We define the Tail conditions for multiple population models as follows:
\begin{definition}\label{Definition 1.2.4}
    (Tail conditions) We say $\boldsymbol{x}^{k},k=1,2,\ldots,K$ satisfies the tail condition $\mathcal{T}(f,v_{k0})$, if we have a $v_{k0}\in (0,\infty],k=1,2,\ldots,K$, and a function $f:\mathbb{N}\times(0,\infty)\rightarrow (0,\infty)$, such that $(i,j)\in V\times V$,
    \begin{equation}\label{as 4}
        \mathbb{P}(|\hat{\boldsymbol{\Sigma}}_{ij}^{k}-\boldsymbol{\Sigma}_{0ij}^{k}|>\delta)\leq \frac{1}{f(n_k,\delta)},
    \end{equation}
     for all $\delta\in (0,\underset{k=1,2,\ldots,K}{min}\frac{1}{v_{k0}}]$, and $k=1,2,\ldots,K$.Here, $V$ is the set of coordinates corresponding to the random vector and $\hat{\boldsymbol{\Sigma}}^{k}$ and $\boldsymbol{\Sigma}_{0}^{k}$ are the sample covariance matrix and true population covariance matrix respectively for each population $k=1,2,\ldots, K$.
\end{definition}
For single population, Definition \ref{Definition 1.2.4} simplifies to the fact that $\boldsymbol{x}$ satisfies the tail condition $\mathcal{T}(f,v_0)$, if we have a $v_0\in (0,\infty]$, and a function $f:\mathbb{N}\times(0,\infty)\rightarrow (0,\infty)$, such that $(i,j)\in V\times V$, 
$$
\mathbb{P}(|\hat{\boldsymbol{\Sigma}}_{ij}^{k}-\boldsymbol{\Sigma}_{0ij}^{k}|>\delta)\leq \frac{1}{f(n,\delta)},$$
for all $\delta \in (0,\frac{1}{v_0}]$. Two important definitions as a result of this interpretation for the single population model are of the inverse functions as
$$
n_{f}(\delta,r)=\underset{n}{arg\ max}\{f(n,\delta)\leq r\},
$$
and $$
\delta_{f}(n,r)=\underset{\delta}{arg\ max}\{f(n,\delta)\leq r\}.$$ A simple understanding from the definitions of the inverse functions is
$$
n>n_{f}(\delta,r)\text{ for some }\delta>0\implies\delta_{f}(n,r)\leq \delta.$$ We refer to \cite{10.1214/11-EJS631} for a deeper discussion on the Tail condition and inverse functions for the single population model. We state the following lemma regarding Tail conditions corresponding to sub-Gaussian conditions (Definition \ref{Definition 1.2.2}).
\begin{lemma}\label{Lemma 1.1}
    Suppose $\boldsymbol{x}^{k},k=1,2,\ldots,K$, are each zero mean random vector with covariance matrix $\boldsymbol{\Sigma}_{0}^{k}$, satisfying the sub-Gaussian conditions (Definition \ref{Definition 1.2.2}), with a common $K_1>0$. If we have $n_k$ samples each from population model $k, k=1,2,\ldots,K$, the sample covariance matrices $\hat{\boldsymbol{\Sigma}}^{k}$, satisfies the tail conditions
    \begin{equation}\label{as 5}
        \mathbb{P}(|\hat{\boldsymbol{\Sigma}}_{ij}^{k}-\boldsymbol{\Sigma}_{0ij}^{k}|>\delta)\leq 4\ exp\bigg\{ -\frac{n_k \delta^2}{128(1+12K_{1}^{2})^{2}max\ (\Sigma_{0ii}^{k})^{2}}\bigg\},
    \end{equation}
    for all $\delta\in (0,\underset{k_1}{min}\ 8(1+12K_1^{2})\ max\ (\Sigma_{0ii}^{k_1}))$.
\end{lemma}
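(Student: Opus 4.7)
\textbf{Proof plan for Lemma \ref{Lemma 1.1}.} The plan is to reduce the multi-population statement to the well-known single-population sub-Gaussian concentration bound for sample covariance entries (the form used by \cite{10.1214/11-EJS631}), exploiting the fact that the sub-Gaussian constant $K_1$ in Definition \ref{Definition 1.2.2} is taken to be \emph{common} across $k=1,\ldots,K$. Because the samples from population $k$ are independent of those from population $k'$ for $k\neq k'$, and the statement in \eqref{as 5} is an entry-wise bound holding separately for each $k$, it is enough to prove the bound for a single fixed $k$, using only quantities intrinsic to that population.

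First I would fix $k$ and $(i,j)\in V\times V$ and write
\begin{equation*}
\hat{\boldsymbol{\Sigma}}_{ij}^{k}-\boldsymbol{\Sigma}_{0ij}^{k}=\frac{1}{n_k}\sum_{l=1}^{n_k}\bigl(x_{i}^{k,l}x_{j}^{k,l}-\mathbb{E}[x_{i}^{k,l}x_{j}^{k,l}]\bigr).
\end{equation*}
Next, I would standardize via $\tilde{x}_{i}^{k,l}:=x_{i}^{k,l}/\sqrt{\boldsymbol{\Sigma}_{0ii}^{k}}$, which by Definition \ref{Definition 1.2.2} satisfies $\mathbb{E}\bigl(e^{(\tilde{x}_{i}^{k,l})^{2}/K_{1}^{2}}\bigr)\leq 2$ with the \emph{same} $K_1$ for all $i,k$. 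I would then apply the standard polarization identity
\begin{equation*}
\tilde{x}_{i}^{k,l}\tilde{x}_{j}^{k,l}=\tfrac{1}{4}\bigl[(\tilde{x}_{i}^{k,l}+\tilde{x}_{j}^{k,l})^{2}-(\tilde{x}_{i}^{k,l}-\tilde{x}_{j}^{k,l})^{2}\bigr]
\end{equation*}
so that each summand becomes a centered difference of squares of sub-Gaussian random variables, which are sub-exponential. The sub-Gaussian parameter of $\tilde{x}_{i}^{k,l}\pm\tilde{x}_{j}^{k,l}$ is bounded by $2K_1$ (up to an absolute constant), so their squares have an Orlicz $\psi_1$-norm controlled by a constant multiple of $K_1^2$; the explicit factor $(1+12K_1^2)$ in \eqref{as 5} arises from bookkeeping the moment generating function of $(\tilde{x})^{2}-\mathbb{E}(\tilde{x})^{2}$ via the standard computation $\mathbb{E}\,e^{t((\tilde{x})^2-\mathbb{E}(\tilde{x})^2)}\le \exp(ct^2K_1^4)$ valid for $|t|\le c'/K_1^2$.

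Then I would invoke a one-sided Bernstein/sub-exponential Chernoff bound to the i.i.d.\ sum of the centered products $\tilde{x}_{i}^{k,l}\tilde{x}_{j}^{k,l}-\mathbb{E}[\tilde{x}_{i}^{k,l}\tilde{x}_{j}^{k,l}]$, obtaining for $\delta/\sqrt{\boldsymbol{\Sigma}_{0ii}^{k}\boldsymbol{\Sigma}_{0jj}^{k}}\le 8(1+12K_1^2)$ a bound of the form
\begin{equation*}
\mathbb{P}\Bigl(\bigl|\hat{\boldsymbol{\Sigma}}_{ij}^{k}-\boldsymbol{\Sigma}_{0ij}^{k}\bigr|>\delta\Bigr)\le 4\exp\!\left\{-\frac{n_k\delta^{2}}{128(1+12K_1^{2})^{2}\,\boldsymbol{\Sigma}_{0ii}^{k}\boldsymbol{\Sigma}_{0jj}^{k}}\right\},
\end{equation*}
then bound $\boldsymbol{\Sigma}_{0ii}^{k}\boldsymbol{\Sigma}_{0jj}^{k}\le(\max_i\boldsymbol{\Sigma}_{0ii}^{k})^{2}$ to get \eqref{as 5}. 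The factor $4$ comes from a union over the two signs in the polarization and the two sides of the absolute value. Finally, the admissible range of $\delta$ translates into $\delta\in(0,8(1+12K_1^{2})\max_i\boldsymbol{\Sigma}_{0ii}^{k})$, and taking $\min$ over $k$ yields the stated range.

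The only non-routine part is verifying that the constants $128$ and $(1+12K_{1}^{2})^{2}$ in the denominator come out exactly as claimed; this is a purely mechanical (if tedious) calculation of the moment generating function of the squared centered sub-Gaussian variable, and is the main bookkeeping obstacle. Apart from that, no step uses anything beyond the standard sub-exponential Chernoff bound; the multi-population flavor of the statement is automatic because $K_1$ is assumed common and the populations are handled one at a time.
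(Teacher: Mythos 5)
Your proposal is correct and follows essentially the same route as the paper, which states Lemma \ref{Lemma 1.1} without a separate proof because it is exactly the per-population application (with the common $K_1$) of the single-population sub-Gaussian tail bound of \cite{10.1214/11-EJS631}, whose underlying proof is the polarization-plus-sub-exponential Chernoff argument you sketch, including the factor $4$ from the union over the two squared terms and tails. The only piece you leave open is the constant bookkeeping (the exact $128$ and $(1+12K_{1}^{2})^{2}$, and extending the admissible range from $\sqrt{\Sigma_{0ii}^{k}\Sigma_{0jj}^{k}}$ to $\max_{i}\Sigma_{0ii}^{k}$ via monotonicity of the tail), which is indeed mechanical and matches the cited computation.
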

For the single population model, Lemma \ref{Lemma 1.1} results in satisfying the Tail conditions
$$
 \mathbb{P}(|\hat{\boldsymbol{\Sigma}}_{ij}^{k}-\boldsymbol{\Sigma}_{0ij}^{k}|>\delta)\leq 4\ exp\bigg\{ -\frac{n \delta^2}{128(1+12K_{1}^{2})^{2}max\ (\Sigma_{0ii}^{k})^{2}}\bigg\},
$$ for all $\delta\in (0,8(1+12K_1^{2})\ max\ (\Sigma_{0ii}))$. From the definition of an inverse function,
$$
\delta_{f}(n,r)=8(1+12K_1^{2})\ max\ (\Sigma_{0ii})\sqrt{2\frac{log\ 4r}{n}},$$
and
$$
n_{f}(\delta,r)=128(1+12K_{1}^{2})^{2}\ max\ (\Sigma_{0ii})^{2}\bigg(\frac{log\ 4r}{\delta^2}\bigg).$$
As a consequence of the definition of Tail condition (Definition \ref{Definition 1.2.4}) and Lemma \ref{Lemma 1.1}, if
\begin{equation}\label{as 6}
    \delta_{k}(r)=\delta_{f}(n_k,r)=8(1+12K_1^{2})\ max\ \Sigma_{0ii}^{k}\sqrt{2\frac{log\ 4r}{n_k}},
\end{equation}
and $n_k$ is such that $\delta_{k}=\delta_{f}(n_k,p^{\gamma})<\underset{k}{min}\ 8(1+12K_1^{2})\ max\ \Sigma_{0ii}^{k}$, for $k=1,2,\ldots,K$, then for every $\gamma>2$, we have the tail conditions as,
\begin{equation}\label{as 7}
    \mathbb{P}(||\hat{\boldsymbol{\Sigma}}^{k}-\boldsymbol{\Sigma}_{0}^{k}||_{\infty}>\delta)\leq \frac{1}{p^{\gamma-2}},
\end{equation}
where $\delta=max\ \delta_k$.

We now briefly discuss group graphical lasso.
The group graphical lasso is associated with the problem of jointly estimating the precision matrices from multiple Gaussian graphical models. Suppose we have $K$ Gaussian graphical models and $n_{k}$
observations from each of the models. Let us denote $\boldsymbol{x}_{i}^{k}$
as the $i^{th}$ observation from $k^{th}$ model, $i=1,2,\ldots,n_{k}$,
and $k=1,2,\ldots,K$. The observations $\boldsymbol{x}_{i}^{k}\sim N(\boldsymbol{0},\boldsymbol{\Sigma}^{k})$,
where $\boldsymbol{\Sigma}^{k}$ denotes the covariance matrix and
$\boldsymbol{\Omega}^{k}=\{\boldsymbol{\Sigma}^{k}\}^{-1}$ denotes
the precision matrix from model $k$. Define $\hat{\boldsymbol{\Sigma}}^{k}=\frac{1}{n_{k}}(\boldsymbol{X}^{k})^{T}\boldsymbol{X}^{k}$
as the sample covariance matrix from the model $k$, where $\boldsymbol{X}_{p\times n_{k}}^{k}=(\boldsymbol{x}_{1}^{k},\ldots,\boldsymbol{x}_{n_{k}}^{k})$
as the data matrix. The group graphical lasso by \cite{danaher2014joint}
is defined to be the solution to 
\begin{equation}\label{as 8}
    \underset{\boldsymbol{\Omega}=\{\boldsymbol{\Omega}^{1},\ldots,\boldsymbol{\Omega}^{K}\}}{max}\sum_{k=1}^{K}(log\ det(\boldsymbol{\Omega}^{k})-trace(\hat{\boldsymbol{\Sigma}}\boldsymbol{\Omega}^{k}))-P(\boldsymbol{\Omega}),
\end{equation}
 where $P(\boldsymbol{\Omega})=\lambda\sum_{k}||(\boldsymbol{\Omega}^{k})^{-}||_{1}+\rho\sum_{i\neq j}\sqrt{\sum_{k}(\Omega_{ij}^{k})^{2}}$.
In the penalty term $P(\boldsymbol{\Omega})$, the first term is used
to impose sparsity within the models, and the group lasso penalty
is applied so that it encourages similar sparsity patterns across the
models; see \cite{10.1111/j.1467-9868.2005.00532.x}. Here $\lambda$ and $\rho$
are non-negative regularization parameters that are mainly selected
using BIC or e-BIC for proper model fitting. However, theoretical analysis
of the method still needs to be done.
Some definitions of functions of the true precision matrix $\boldsymbol{\Omega}^{k}$ for each model $k$ shall now be given. We refer to \cite{10.1214/15-EJS1031} for a deeper analysis of the definitions.
Let $\kappa_{\boldsymbol{\Sigma}_{0}^{k}}$ be defined as $$\kappa_{\boldsymbol{\Sigma}_{0}^{k}}=|||\boldsymbol{\Sigma}_{0}^{k}|||_{\infty},$$ for all $k$. This $\kappa_{\boldsymbol{\Sigma}_{0}^{k}}$ measures the size of the entries of the covariance matrix of each model $k$. Note that the Hessian $\boldsymbol{\Gamma}^{k}$ of the negative likelihood function $l(\Omega)=\sum_{k=1}^{K}(trace(\hat{\boldsymbol{\Sigma}}^{k}\boldsymbol{\Omega}^{k})-log\ det(\boldsymbol{\Omega}^{k}))$ in matrix form is given as $$\boldsymbol{\Gamma}(\boldsymbol{\Omega}^{k})=\boldsymbol{\Sigma}^{k}\otimes \boldsymbol{\Sigma}^{k},$$
 where $\otimes$ represents the Kronecker product of two matrices. The Hessian at the true parameter $\boldsymbol{\Omega}_{0}^{k}$ is represented as $\boldsymbol{\Gamma}_{0}^{k}$. Let us define $\boldsymbol{\Gamma}^{k}_{0T_{k}T_{k}'}$ with rows and columns of $\boldsymbol{\Gamma}_{0}^{k}$ indexed by $T_{k}$ and $T_{k}'$, where $T_{k}$ and $T_{k}'$ are subsets of $V$. Define  $$\boldsymbol{\Gamma}^{k}_{0SS}=[\boldsymbol{\Sigma}_{0}^{k}\otimes \boldsymbol{\Sigma}_{0}^{k}]_{SS},$$ and $\kappa_{\boldsymbol{\Gamma}_{0}^{k}}=|||(\boldsymbol{\Gamma}^{k}_{0SS})^{-1}|||_{\infty}$. In light of the above definition of $\kappa_{\boldsymbol{\Gamma}^{k}_{0}}$, we define the irrepresentability condition and between-group irrepresentability condition for multiple populations as follows:
\begin{assumption}\label{Assumption 1}
(Irrepresentability condition) We have an $\alpha\in(0,1]$, such that
\begin{equation}\label{as 9}
	\underset{e \in S^{c}}{max}||\boldsymbol{\Gamma}_{0eS}^{k}(\boldsymbol{\Gamma}_{0SS}^{k})^{-1}||_{1}\leq 1-\alpha,
\end{equation}
 for every $k=1,2,\ldots,K$.
\end{assumption}
The interpretation of this assumption is that no edge variable that is not included in the edge set $S$ is highly correlated with variables in the edge set $S$.

\begin{assumption}\label{Assumption 2}
(Between-group irrepresentability condition) For penalty parameters $\lambda$ and $\rho$, with $K$ as the number of population models, $\alpha$ from (\ref{as 9}), and $\psi\in(0,1)$, we have
\begin{equation}\label{as 10}
\underset{e\in S^{c}}{max}\sqrt{\sum_{k=1}^{K}(\boldsymbol{\Gamma}_{0eS}^{k}(\boldsymbol{\Gamma}_{0SS}^{k})^{-1}\boldsymbol{1})^2}<\frac{\frac{\rho}{(\lambda+\rho)(1-\psi)}-\frac{\alpha\sqrt{K}}{4}}{1+(\alpha/4)}.
\end{equation}
\end{assumption}

 This assumption limits the overall correlation of the $K$ population models among the variables in the edge set $S$ with those not in the edge set $S$. Note that for some $\alpha$, Assumption  \ref{Assumption 1} shall be enough. Specifically, if we have an $\alpha$ such that $$1-\frac{\alpha}{2}-\frac{\alpha^2}{4}\leq \frac{\rho}{\sqrt{K}(\lambda+\rho)(1-\psi)},$$ the irrepresentability condition implies the between-group irrepresentability condition.
 Along with the irrepresentability condition and the between-group irrepresentability condition, let us assume the boundedness of the eigenvalues of the population precision matrices. The assumption stated is as follows:
\begin{assumption}\label{Assumption 3}
     (Bounded eigenvalues) There exists a constant $L$
such that 
\begin{equation*}
    0<L<\Lambda_{min}(\boldsymbol{\Omega}_{0}^{k})\leq\Lambda_{max}(\boldsymbol{\Omega}_{0}^{k})<1/L<\infty,
\end{equation*}
 where $\Lambda_{min}$ and $\Lambda_{max}$ denotes the minimum and maximum eigenvalues, for all $k=1,2,\ldots,K$.
\end{assumption}

\begin{assumption}\label{Assumption 4}
    (Sample size ratio) Suppose $n_k$ is the number of samples from model $k, k=1,2,\ldots,K$, and let $n=\underset{k=1,2,\ldots,K}{min}n_k$. Then the sample sizes are of the same order as $n$, i.e. $n/n_{k}\rightarrow c\in (0,1)$, for all $k=1,2,\ldots,K$.
\end{assumption}
Our analysis will be based on the above assumptions.
\section{\large Convergence of precision matrices}\label{Section 2}
The optimization problem
(\ref{as 8}) can be equivalently represented as 
\begin{equation}\label{as 11}
\underset{\{\boldsymbol{\Omega}^{k}\in\mathcal{S}_{++}^{p}\}}{min}\sum_{k=1}^{K}(trace(\hat{\boldsymbol{\Sigma}}^{k}\boldsymbol{\Omega}^{k})-log\ det(\boldsymbol{\Omega}^{k}))+\lambda\sum_{k}||(\boldsymbol{\Omega}^{k})^{-}||_{1}+\rho\sum_{i\neq j}\sqrt{\sum_{k}(\Omega_{ij}^{k})^{2}}.
\end{equation}
In this section, we derive the rate of convergence in terms of the supremum norm for each population model. We derive the results in terms of the tail function $f$. The choices of the penalty parameters $\lambda$ and $\rho$ are stated in terms of $\gamma$. The value of $\gamma$ shall be chosen so that the theorem will hold for $\gamma>2$. We need to keep in mind that higher values of $\gamma$ shall lead to faster convergence, but will need more samples from each class. The following theorem limits the maximum element-wise difference between the predicted and true precision matrices when all the population models are considered together.

\begin{theorem}\label{Theorem 2.1}
	Consider $\boldsymbol{x}_i^{k}\in \mathbb{R}^{p}$ are independent and identically distributed samples from $\boldsymbol{x}^{k}$, such that $\mathbb{E}(\boldsymbol{x}^{k})=0$ and covariance matrix $\boldsymbol{\Sigma}_{0}^{k},k=1,2,\ldots,K$. Suppose $\boldsymbol{\Omega}_{0}^{k}=(\boldsymbol{\Sigma}_{0}^{k})^{-1}$ is our precision matrix for population model $k$, and the distributions satisfies Assumption \ref{Assumption 1} and Assumption \ref{Assumption 2}, for an $\alpha\in(0,1]$ and $\psi\in (0,1)$, and the tail condition (Definition \ref{Definition 1.2.4}) holds for all $k=1,2,\ldots,K$. Let $\hat{\boldsymbol{\Omega}}=\{\hat{\boldsymbol{\Omega}}^{k},k=1,2,\ldots,K\}$ be the unique solution to the optimization problem (\ref{as 11}), with $\lambda>0,\rho \leq \frac{\lambda}{\psi(2-\alpha)}(2-2\psi+\alpha\psi)$, and $\lambda+\rho=(8/\alpha)\delta$, where $\delta=\underset{k=1,2,\ldots,K}{max}\delta_{f}(n_k,p^{\gamma})$, for some $\gamma>2$. Then, if for each population $k$, the sample size $n_k$ satisfies
	\begin{equation}\label{as 12}
		n_k>n_{f}\bigg(min\left\{\frac{1}{\kappa_{\boldsymbol{\Gamma}_{0}^{k}}6(1+(8/\alpha))d}\ \underset{k_1=1,2,\ldots,K}{min}\left\{\frac{1}{max\{\kappa_{\boldsymbol{\Sigma}_{0}^{k_1}},\kappa^{3}_{\boldsymbol{\Sigma}_{0}^{k_1}}\kappa_{\boldsymbol{\Gamma}_{0}^{k_1}}\}}\right\},\underset{k=1,2,\ldots,K}{min}\left\{\frac{1}{v_{k0}}\right\}\right\},p^{\gamma}\bigg),
	\end{equation}
	 then, with probability greater than $1-\frac{K}{p^{\gamma-2}}\rightarrow 1$, for finite $K$,
	\begin{enumerate}
	\item The estimate of the precision matrix from population model $k$, $\hat{\boldsymbol{\Omega}}^{k}$ satisfies the supremum norm bound
	\begin{equation}\label{as 13}
	||\hat{\boldsymbol{\Omega}}^{k}-\boldsymbol{\Omega}_{0}^{k}||_{\infty}\leq 2 \kappa_{\boldsymbol{\Gamma}_{0}^{k}}(1+\frac{8}{\alpha})\delta,
	\end{equation}
	 for all $k=1,2,\ldots,K$.
	\item Denote $E(\boldsymbol{A})=\{(i,j):\boldsymbol{A}_{ij}\neq 0,i\neq j)$ as the edge set corresponding to $\boldsymbol{A}$. Then, $E(\hat{\boldsymbol{\Omega}}^{k})\subset E(\boldsymbol{\Omega}_{0}^{k})$, for all $k=1,2,\ldots,K$ including all edges $(i,j)$ such that $|\Omega_{0ij}^{k}|>2 \kappa_{\boldsymbol{\Gamma}_{0}^{k}}(1+\frac{8}{\alpha})\delta$.
	\end{enumerate}
\end{theorem}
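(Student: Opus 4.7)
The plan is to follow the primal-dual witness (PDW) construction of \cite{10.1214/11-EJS631}, extended to handle the extra group penalty term present in (\ref{as 11}). I would begin by writing the subgradient KKT conditions for (\ref{as 11}): for each $k$,
\[
\hat{\boldsymbol{\Sigma}}^{k}-(\hat{\boldsymbol{\Omega}}^{k})^{-1}+\lambda \hat{\boldsymbol{Z}}^{k}+\rho \hat{\boldsymbol{W}}^{k}=0,
\]
where $\hat{\boldsymbol{Z}}^{k}$ is a subgradient of $\|(\boldsymbol{\Omega}^{k})^{-}\|_{1}$ (entries in $[-1,1]$, equal to the sign on the support) and $\hat{\boldsymbol{W}}^{k}$ is a subgradient of the group penalty $\sum_{i\neq j}\sqrt{\sum_{k}(\Omega_{ij}^{k})^{2}}$ (so that $(\hat{W}_{ij}^{1},\ldots,\hat{W}_{ij}^{K})$ equals the unit vector $(\hat{\Omega}_{ij}^{k}/\sqrt{\sum_{k}(\hat{\Omega}_{ij}^{k})^{2}})_{k}$ on the support of the group norm and has Euclidean norm at most $1$ off-support). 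The PDW construction defines a restricted estimator $\tilde{\boldsymbol{\Omega}}^{k}$ by optimizing (\ref{as 11}) subject to the constraint that the off-diagonal support is contained in $S$, which produces dual certificates $(\tilde{\boldsymbol{Z}}^{k},\tilde{\boldsymbol{W}}^{k})$ on $S$; we then extend these to $S^{c}$ and must verify strict dual feasibility there, which simultaneously identifies $\tilde{\boldsymbol{\Omega}}$ as the unique solution of the unrestricted problem and yields the support recovery statement.

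Next I would Taylor-expand $(\tilde{\boldsymbol{\Omega}}^{k})^{-1}$ around $\boldsymbol{\Omega}_{0}^{k}$:
\[
(\tilde{\boldsymbol{\Omega}}^{k})^{-1}=\boldsymbol{\Sigma}_{0}^{k}-\boldsymbol{\Sigma}_{0}^{k}(\tilde{\boldsymbol{\Omega}}^{k}-\boldsymbol{\Omega}_{0}^{k})\boldsymbol{\Sigma}_{0}^{k}+R(\tilde{\boldsymbol{\Delta}}^{k}),
\]
with $\tilde{\boldsymbol{\Delta}}^{k}=\tilde{\boldsymbol{\Omega}}^{k}-\boldsymbol{\Omega}_{0}^{k}$. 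Substituting this into the KKT system gives, in vectorized form,
\[
\boldsymbol{\Gamma}_{0SS}^{k}\,\mathrm{vec}(\tilde{\boldsymbol{\Delta}}_{S}^{k})=\mathrm{vec}(\boldsymbol{W}^{k}-\lambda\tilde{\boldsymbol{Z}}_{S}^{k}-\rho\tilde{\boldsymbol{W}}_{S}^{k}-R(\tilde{\boldsymbol{\Delta}}^{k})_{S}),
\]
where $\boldsymbol{W}^{k}=\boldsymbol{\Sigma}_{0}^{k}-\hat{\boldsymbol{\Sigma}}^{k}$. The tail condition together with Lemma \ref{Lemma 1.1} and the choice $\lambda+\rho=(8/\alpha)\delta$ yields $\|\boldsymbol{W}^{k}\|_{\infty}\leq\delta$ for every $k$ with probability at least $1-K/p^{\gamma-2}$. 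Using $\kappa_{\boldsymbol{\Gamma}_{0}^{k}}=|||(\boldsymbol{\Gamma}_{0SS}^{k})^{-1}|||_{\infty}$, together with $|\tilde{Z}_{ij}^{k}|\leq 1$, $\|\tilde{\boldsymbol{W}}_{S}^{k}\|_{\infty}\leq 1$, and a bound $\|R(\tilde{\boldsymbol{\Delta}}^{k})\|_{\infty}=O(d\,\kappa_{\boldsymbol{\Sigma}_{0}^{k}}^{3}\|\tilde{\boldsymbol{\Delta}}^{k}\|_{\infty}^{2})$ valid when $\|\tilde{\boldsymbol{\Delta}}^{k}\|_{\infty}$ is small enough, a fixed-point / contraction-style argument establishes
\[
\|\tilde{\boldsymbol{\Delta}}^{k}\|_{\infty}\leq 2\kappa_{\boldsymbol{\Gamma}_{0}^{k}}(\lambda+\rho)=2\kappa_{\boldsymbol{\Gamma}_{0}^{k}}(1+8/\alpha)\delta,
\]
where the lower bound on $n_{k}$ in (\ref{as 12}) is precisely what is needed to absorb the quadratic remainder; this is claim (1).

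The delicate step is strict dual feasibility on $S^{c}$. For each $e\in S^{c}$ the KKT equation determines
\[
\lambda\tilde{Z}_{e}^{k}+\rho\tilde{W}_{e}^{k}=-\boldsymbol{\Gamma}_{0eS}^{k}(\boldsymbol{\Gamma}_{0SS}^{k})^{-1}\left(\mathrm{vec}(\boldsymbol{W}^{k}-\lambda\tilde{\boldsymbol{Z}}_{S}^{k}-\rho\tilde{\boldsymbol{W}}_{S}^{k}-R^{k}_{S})\right)+W_{e}^{k}-R^{k}_{e}.
\]
One must exhibit $\tilde{Z}_{e}^{k}\in(-1,1)$ and $\tilde{\boldsymbol{W}}_{e}=(\tilde{W}_{e}^{1},\ldots,\tilde{W}_{e}^{K})$ with $\|\tilde{\boldsymbol{W}}_{e}\|_{2}<1$ simultaneously. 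Setting $\tilde{W}_{e}^{k}$ in the direction of the RHS vector coordinated across $k$ and absorbing the residue into $\tilde{Z}_{e}^{k}$, Assumption \ref{Assumption 1} controls the coordinate-wise $\ell_{1}$ contribution per model (giving $|\text{RHS for }k|\leq (1-\alpha)(\lambda+\rho)+2\delta$, which combined with $\lambda+\rho=(8/\alpha)\delta$ yields $|\tilde{Z}_{e}^{k}|\leq 1-\alpha/4<1$), while Assumption \ref{Assumption 2} controls the Euclidean norm across $k$ of the group-penalty component, ensuring $\|\tilde{\boldsymbol{W}}_{e}\|_{2}<1$ via the exact numerical constraint written in (\ref{as 10}), with the role of $\psi$ being to split the slack between the $\ell_{1}$ and group subgradients. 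Strict dual feasibility thus shows $E(\hat{\boldsymbol{\Omega}}^{k})\subset S$ for every $k$, and the supremum norm bound in claim (1) forces every entry with $|\Omega_{0ij}^{k}|>2\kappa_{\boldsymbol{\Gamma}_{0}^{k}}(1+8/\alpha)\delta$ to be recovered as nonzero, giving claim (2). The main obstacle is the coupled strict dual feasibility argument on $S^{c}$: ensuring simultaneously that the per-model $\ell_{1}$ slack stays below $1$ and that the across-model group subgradient Euclidean norm stays below $1$ is exactly what forces the precise numerical form of Assumption \ref{Assumption 2}, and verifying this requires carefully tracking how the Taylor remainder $R^{k}$ is controlled by the sample-size lower bound (\ref{as 12}).
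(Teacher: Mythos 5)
Your proposal follows essentially the same route as the paper: the primal--dual witness construction of Ravikumar et al.\ with the restricted problem on $S$, the Taylor expansion of $(\tilde{\boldsymbol{\Omega}}^{k})^{-1}$ and a Brouwer-type fixed-point argument bounding $\|\tilde{\boldsymbol{\Delta}}^{k}\|_{\infty}\le 2\kappa_{\boldsymbol{\Gamma}_{0}^{k}}(1+8/\alpha)\delta$, and strict dual feasibility on $S^{c}$ obtained by splitting the residual between the $\ell_{1}$ and group subgradients via $\psi$, with Assumption \ref{Assumption 1} controlling the per-model bound and Assumption \ref{Assumption 2} the across-model Euclidean norm, exactly as in the paper's Lemmas \ref{Lemma 2.3}--\ref{Lemma 2.6}. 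Only cosmetic points differ: you reuse the symbol $\boldsymbol{W}^{k}$ for both the group subgradient and the noise matrix (the paper writes $\boldsymbol{M}^{k}$ for the former), and your stated per-model bound $|\tilde{Z}_{e}^{k}|\le 1-\alpha/4$ glosses over the factor $(\lambda+\rho)/\lambda$ arising from the $\psi$-split, which is precisely where the constraint $\rho\le\frac{\lambda}{\psi(2-\alpha)}(2-2\psi+\alpha\psi)$ is used in the paper's argument.
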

\begin{proof}
Define the sub-gradient of $\sum_{k}||(\boldsymbol{\Omega}^{k})^{-}||_{1}$ and $\sum_{i\neq j}\sqrt{\sum_{k}(\Omega_{ij}^{k})^{2}}$ by $\boldsymbol{Z}^{k}$, and $\boldsymbol{M}^{k}$, respectively, where
\begin{equation}\label{as 14}
Z_{ij}^{k}=\begin{cases}
0 &, \text{if }i=j\\
sign(\Omega_{ij}^{k})&, \text{if } i\neq j\text{, and }|\Omega_{ij}^{k}|>0\\
[-1,1]&, \text{if } i\neq j\text{, and }|\Omega_{ij}^{k}|=0
\end{cases},
\end{equation}
 while 
\begin{equation}\label{as 15}
\{M_{ij}^{k},k=1,2,\ldots,K\}=\begin{cases}
\frac{\Omega_{ij}^{k}}{\sqrt{\sum_{l=1}^{K}(\Omega_{ij}^{l})^2}} &, \text{if }\{\Omega_{ij}^{k}\neq 0,k=1,2,\ldots,K\}\\
\boldsymbol{a}\text{,with }||\boldsymbol{a}||_2\leq 1&, \text{if } \{\Omega_{ij}^{k}= 0,k=1,2,\ldots,K\}
\end{cases}.
\end{equation}
\begin{lemma}\label{Lemma 2.2}
For any $\lambda>0,\rho>0$, and sample covariance matrix $\hat{\boldsymbol{\Sigma}}^{k}$ with strictly positive diagonal elements, $k=1,2,\ldots,K$, the problem
\begin{equation}\label{as 16}
\hat{\boldsymbol{\Sigma}}^k-(\hat{\boldsymbol{\Omega}}^k)^{-1}+\lambda \hat{\boldsymbol{Z}}^k+\rho \hat{\boldsymbol{M}}^k=0,
\end{equation}
 has a unique solution $\hat{\boldsymbol{\Omega}}=\{\hat{\boldsymbol{\Omega}}^{k},k=1,2,\ldots,K\}$, where $\hat{\boldsymbol{Z}}^k$, and $\hat{\boldsymbol{M}}^k$ are subgradients at $\hat{\boldsymbol{\Omega}}^k$.
\end{lemma}
\begin{proof}
Note that the problem in (\ref{as 11}) can also be written as 
$$
\underset{\{\boldsymbol{\Omega}^{k}\in\mathcal{S}_{++}^{p},\sum_{k}||(\boldsymbol{\Omega}^{k})^{-}||_{1}\leq a_{1}(\lambda),\sum_{i\neq j}\sqrt{\sum_{k}(\Omega_{ij}^{k})^{2}}\leq a_2(\rho)\}}{min}\sum_{k=1}^{K}(trace(\hat{\boldsymbol{\Sigma}}^{k}\boldsymbol{\Omega}^{k})-log\ det(\boldsymbol{\Omega}^{k})),
$$
 for some $a_1(\lambda)<+\infty, a_2(\rho)<+\infty$. These two constraints, together, bound the off-diagonal elements. However, using the Hadamard inequality, it can be shown that
$$
\sum_{k=1}^{K}(trace(\hat{\boldsymbol{\Sigma}}^{k}\boldsymbol{\Omega}^{k})-log\ det(\boldsymbol{\Omega}^{k}))\geq \sum_{k=1}^{K}(\sum_{i=1}^{p}(\hat{\boldsymbol{\Sigma}}_{ii}^{k}\boldsymbol{\Omega}^{k}_{ii}-log\ \boldsymbol{\Omega}^{k}_{ii})),
$$
 and the right hand side approaches to $\infty$, if $||(\boldsymbol{\Omega}^{k}_{11},\boldsymbol{\Omega}^{k}_{22},\ldots,\boldsymbol{\Omega}^{k}_{pp})||_{2}\rightarrow \infty$, for atleast one $k$, since $\hat{\boldsymbol{\Sigma}}^{k}$ has strictly positive diagonal elements, $k=1,2,\ldots,K$. Hence, the minimum is attained and will be unique, as the optimization function is strictly convex. Lastly, (\ref{as 16}) is the sub-differential equation, and any solution should belong to the sub-gradients of $\sum_{k}||(\boldsymbol{\Omega}^{k})^{-}||_{1}$, and $\sum_{i\neq j}\sqrt{\sum_{k}(\Omega_{ij}^{k})^{2}}$, evaluated at $\hat{\boldsymbol{\Omega}}=\{\hat{\boldsymbol{\Omega}}^{k},k=1,2,\ldots, K\}$.
\end{proof}
We shall prove the theorem using the primal-dual witness method, which involves a particular order of steps (see \cite{wainwright2006sharp}). We shall follow the steps to construct $(\tilde{\boldsymbol{\Omega}}^{k},\tilde{\boldsymbol{Z}}^{k},\tilde{\boldsymbol{M}}^{k};k=1,2,\ldots, K)$. The solutions are expected to satisfy the optimal conditions associated with the optimization problem (\ref{as 11}) with high probability. If the method succeeds, then $\tilde{\boldsymbol{\Omega}}^{k}$ shall be equal to the unique solution $\hat{\boldsymbol{\Omega}}^{k}$. The primal-dual witness solution shall be constructed as follows:
 
\begin{enumerate}
\item Obtain $\tilde{\boldsymbol{\Omega}}=\{\tilde{\boldsymbol{\Omega}}^{k},k=1,2,\ldots,K\}$ by solving the restricted problem 
\begin{equation}\label{as 17}
\underset{\{\boldsymbol{\Omega}^{k}\in\mathcal{S}_{++}^{p},\boldsymbol{\Omega}^{k}_{S^{c}}=0,k=1,2,\ldots,K\}}{min}\sum_{k=1}^{K}(trace(\hat{\boldsymbol{\Sigma}}^{k}\boldsymbol{\Omega}^{k})-log\ det(\boldsymbol{\Omega}^{k}))+\lambda\sum_{k}||(\boldsymbol{\Omega}^{k})^{-}||_{1}+\rho\sum_{i\neq j}\sqrt{\sum_{k}(\Omega_{ij}^{k})^{2}}.
\end{equation}
\item $\tilde{\boldsymbol{Z}}^{k}$ and $\tilde{\boldsymbol{M}}^{k}$ are the sub-gradients at $\tilde{\boldsymbol{\Omega}}^{k}$, for $(i,j)\in S$.
\item For all $k=1,2,\ldots,K$, and $(i,j) \in S^{c}$, we select
\begin{enumerate}
\item
\begin{equation}\label{as 18}
 \tilde{Z}_{ij}^{k}=\frac{\psi}{\lambda}\left\{[(\tilde{\boldsymbol{\Omega}}^{k})^{-1}]_{ij}-[\hat{\boldsymbol{\Sigma}}^{k}]_{ij}\right\}
\end{equation}
\item 
\begin{equation}\label{as 19}
 \tilde{M}_{ij}^{k}=\frac{1-\psi}{\rho}\left\{[(\tilde{\boldsymbol{\Omega}}^{k})^{-1}]_{ij}-[\hat{\boldsymbol{\Sigma}}^{k}]_{ij}\right\}
\end{equation}
\end{enumerate}

\item Verify the strict dual feasibility conditions
\begin{enumerate}
\item \begin{equation}\label{as 20}
|\tilde{Z}_{ij}^{k}|<1,
\end{equation}for all $k=1,2,\ldots,K$, and $(i,j)\in S^{c}$.
\item\begin{equation}\label{as 21}
\sqrt{\sum_{k=1}^{K}(\tilde{M}_{e}^{k})^{2}}<1,
\end{equation} for any $e \in S^{c}$
\end{enumerate}

\end{enumerate}
Steps 1 to 3 are obtained so that the triplet $(\tilde{\boldsymbol{\Omega}}^{k},\tilde{\boldsymbol{Z}}^{k},\tilde{\boldsymbol{M}}^{k})$ satisfies the optimality conditions (\ref{as 16}) of the problem. However, the solutions still not guarantee whether $(\tilde{\boldsymbol{Z}}^{k},\tilde{\boldsymbol{M}}^{k})$ truly are an elements of the sub-differential at $\tilde{\boldsymbol{\Omega}}^{k},k=1,2,\ldots,K$. Step 2 ensures whether $(\tilde{\boldsymbol{Z}}^{k},\tilde{\boldsymbol{M}}^{k})$ are truly sub-differentials at $S,k=1,2,\ldots,K$. Step 4 verifies whether $(\tilde{\boldsymbol{Z}}^{k},\tilde{\boldsymbol{M}}^{k})$ are members of sub-differentials at $S^{c},k=1,2,\ldots,K$. If the method is successful, then $\tilde{\boldsymbol{\Omega}}^{k}$, which is the solution to the restricted problem (\ref{as 17}), is equal to the solution to the unrestricted problem (\ref{as 11}). We show that the solution to the restricted problem indeed satisfies the strict dual feasibility condition, and hence the method succeeds with high probability. Let $\boldsymbol{W}^{k}=\hat{\boldsymbol{\Sigma}}^{k}-\boldsymbol{\Sigma}^{k}_{0}=\hat{\boldsymbol{\Sigma}}^{k}-(\boldsymbol{\Omega}^{k}_{0})^{-1}$, and $\boldsymbol{\Delta}^{k}=\tilde{\boldsymbol{\Omega}}^{k}-\boldsymbol{\Omega}_{0}^{k}$. Note that $\boldsymbol{\Delta}^{k}_{S^{c}}=0$, for all $k=1,2,\ldots,K$. Also define $R(\boldsymbol{\Delta}^{k})=\boldsymbol{R}^{k}=(\tilde{\boldsymbol{\Omega}}^{k})^{-1}-(\boldsymbol{\Omega}_{0}^{k})^{-1}+(\boldsymbol{\Omega}_{0}^{k})^{-1}\boldsymbol{\Delta}^{k}(\boldsymbol{\Omega}_{0}^{k})^{-1}$ as the difference between the gradient of $log\ det(\boldsymbol{A})$ at $\boldsymbol{A}=\tilde{\boldsymbol{\Omega}}^{k}$ and its first order Taylor expansion around $\boldsymbol{\Omega}_{0}^{k},k=1,2,\ldots,K$.

\begin{lemma}\label{Lemma 2.3}
If $\rho \leq \frac{\lambda}{\psi(2-\alpha)}(2-2\psi+\alpha\psi)$, and
\begin{equation}\label{as 22}
max\{||\boldsymbol{W}^{k}||_{\infty},||\boldsymbol{R}^{k}||_{\infty}\}\leq \frac{\alpha(\lambda+\rho)}{8},
\end{equation}
 for all $k=1,2,\ldots,K$, then
\begin{enumerate}
\item \begin{equation}\label{as 23}
|\tilde{Z}_{ij}^{k}|<1,
\end{equation}for all $k=1,2,\ldots,K$, and $(i,j)\in S^{c}$.
\item\begin{equation}\label{as 24}
\sqrt{\sum_{k=1}^{K}(\tilde{M}_{e}^{k})^{2}}<1,
\end{equation} for any $e \in S^{c}$
\end{enumerate}
\end{lemma}

\begin{proof}
The KKT conditions for (\ref{as 11}) are
\begin{equation}\label{as 25}
\hat{\boldsymbol{\Sigma}}^k-(\tilde{\boldsymbol{\Omega}}^k)^{-1}+\lambda \tilde{\boldsymbol{Z}}^k+\rho \tilde{\boldsymbol{M}}^k=0.
\end{equation}
  From the definitions of $\boldsymbol{W}^{k}$ and $\boldsymbol{R}^{k}$, we have (\ref{as 25}) as
 \begin{equation}\label{as 26}
 (\boldsymbol{\Omega}_{0}^{k})^{-1}\boldsymbol{\Delta}^{k}(\boldsymbol{\Omega}_{0}^{k})^{-1}+\boldsymbol{W}^{k}-\boldsymbol{R}^{k}+\lambda \tilde{\boldsymbol{Z}}^k+\rho \tilde{\boldsymbol{M}}^k=0.
 \end{equation}
From the properties of vectorizing a matrix, we know,
\begin{equation}\label{as 27}
vec((\boldsymbol{\Omega}_{0}^{k})^{-1}\boldsymbol{\Delta}^{k}(\boldsymbol{\Omega}_{0}^{k}
)^{-1})=((\boldsymbol{\Omega}_{0}^{k})^{-1}\otimes (\boldsymbol{\Omega}_{0}^{k})^{-1})\overline{\boldsymbol{\Delta}}^{k}=\boldsymbol{\Gamma}_{0}^{k}\overline{\boldsymbol{\Delta}}^{k},
\end{equation}
 where $\overline{\boldsymbol{A}}$ is the notation for vectorizing a matrix $\boldsymbol{A}$. Hence, vectorizing (\ref{as 26}), we have
\begin{eqnarray}
\label{as 28}
\boldsymbol{\Gamma}_{0SS}^{k}\overline{\boldsymbol{\Delta}}_{S}^{\ k}+\overline{\boldsymbol{W}}_{S}^{\ k}-\overline{\boldsymbol{R}}_{S}^{\ k}+\lambda \overline{\tilde{\boldsymbol{Z}}}_{S}^{\ k}+\rho \overline{\tilde{\boldsymbol{M}}}_{S}^{\ k}&=&0\\
\label{as 29}
\boldsymbol{\Gamma}_{0S^{c}S}^{k}\overline{\boldsymbol{\Delta}}_{S}^{\ k}+\overline{\boldsymbol{W}}_{S^{c}}^{\ k}-\overline{\boldsymbol{R}}_{S^{c}}^{\ k}+\lambda \overline{\tilde{\boldsymbol{Z}}}_{S^{c}}^{\ k}+\rho \overline{\tilde{\boldsymbol{M}}}_{S^{c}}^{\ k}&=&0.
\end{eqnarray}
There are many possible solutions of (\ref{as 29}) for $\overline{\tilde{\boldsymbol{Z}}}_{S^{c}}^{\ k}$, and $\overline{\tilde{\boldsymbol{M}}}_{S^{c}}^{\ k}$. However, we take solutions of the following type: 
\begin{eqnarray}
\label{as 30}
\overline{\tilde{\boldsymbol{Z}}}_{S^{c}}^{\ k}&=& \frac{\psi}{\lambda}[-\boldsymbol{\Gamma}_{0S^{c}S}^{k}\overline{\boldsymbol{\Delta}}_{S}^{\ k}-\overline{\boldsymbol{W}}_{S^{c}}^{\ k}+\overline{\boldsymbol{R}}_{S^{c}}^{\ k}]\\
\label{as 31}
\overline{\tilde{\boldsymbol{M}}}_{S^{c}}^{\ k}&=& \frac{1-\psi}{\rho}[-\boldsymbol{\Gamma}_{0S^{c}S}^{k}\overline{\boldsymbol{\Delta}}_{S}^{\ k}-\overline{\boldsymbol{W}}_{S^{c}}^{\ k}+\overline{\boldsymbol{R}}_{S^{c}}^{\ k}].
\end{eqnarray}
We show that the solutions of $\overline{\tilde{\boldsymbol{Z}}}_{S^{c}}^{\ k}$ and $\overline{\tilde{\boldsymbol{M}}}_{S^{c}}^{\ k}$ as in (\ref{as 30}) and (\ref{as 31}) satisfies the strict dual feasibility conditions (\ref{as 23}) and (\ref{as 24}). From (\ref{as 28}), we have
\begin{equation}\label{as 32}
\overline{\boldsymbol{\Delta}}_{S}^{\ k}=(\boldsymbol{\Gamma}_{0SS}^{k})^{-1}[-\overline{\boldsymbol{W}}_{S}^{\ k}+\overline{\boldsymbol{R}}_{S}^{\ k}-\lambda \overline{\tilde{\boldsymbol{Z}}}_{S}^{\ k}-\rho \overline{\tilde{\boldsymbol{M}}}_{S}^{\ k}].
\end{equation}
 Substituting (\ref{as 32}) in (\ref{as 30}) and (\ref{as 31}), we get
\begin{equation}\label{as 33}
\overline{\tilde{\boldsymbol{Z}}}_{S^{c}}^{\ k}= \frac{\psi}{\lambda}\left( \boldsymbol{\Gamma}_{0S^{c}S}^{k}(\boldsymbol{\Gamma}_{0SS}^{k})^{-1}(\overline{\boldsymbol{W}}_{S}^{\ k}-\overline{\boldsymbol{R}}_{S}^{\ k})+\boldsymbol{\Gamma}_{0S^{c}S}^{k}(\boldsymbol{\Gamma}_{0SS}^{k})^{-1}(\lambda \overline{\tilde{\boldsymbol{Z}}}_{S}^{\ k}+\rho \overline{\tilde{\boldsymbol{M}}}_{S}^{\ k})-(\overline{\boldsymbol{W}}_{S^{c}}^{\ k}-\overline{\boldsymbol{R}}_{S^{c}}^{\ k}) \right)
\end{equation}

\begin{eqnarray}
\notag
\overline{\tilde{\boldsymbol{M}}}_{S^{c}}^{\ k}&=& \frac{1-\psi}{\rho}\bigg( \boldsymbol{\Gamma}_{0S^{c}S}^{k}(\boldsymbol{\Gamma}_{0SS}^{k})^{-1}(\overline{\boldsymbol{W}}_{S}^{\ k}-\overline{\boldsymbol{R}}_{S}^{\ k})+\boldsymbol{\Gamma}_{0S^{c}S}^{k}(\boldsymbol{\Gamma}_{0SS}^{k})^{-1}(\lambda \overline{\tilde{\boldsymbol{Z}}}_{S}^{\ k}+\rho \overline{\tilde{\boldsymbol{M}}}_{S}^{\ k})\\
\label{as 34}
& &-(\overline{\boldsymbol{W}}_{S^{c}}^{\ k}-\overline{\boldsymbol{R}}_{S^{c}}^{\ k})\bigg).
\end{eqnarray}
Taking $l_{\infty}$ norm on both sides of (\ref{as 33}), we get
\begin{eqnarray}
\notag
||\overline{\tilde{\boldsymbol{Z}}}_{S^{c}}^{\ k}||_{\infty}&\leq & \frac{\psi}{\lambda}\bigg[\ |||\boldsymbol{\Gamma}_{0S^{c}S}^{k}(\boldsymbol{\Gamma}_{0SS}^{k})^{-1}|||_{\infty}(||\overline{\boldsymbol{W}}^{\ k}||_{\infty}+||\overline{\boldsymbol{R}}^{\ k}||_{\infty})+ |||\boldsymbol{\Gamma}_{0S^{c}S}^{k}(\boldsymbol{\Gamma}_{0SS}^{k})^{-1}|||_{\infty}(\lambda ||\overline{\tilde{\boldsymbol{Z}}}_{S}^{\ k}||_{\infty}\\
\label{as 35}
& &+\rho ||\overline{\tilde{\boldsymbol{M}}}_{S}^{\ k}||_{\infty})+(||\overline{\boldsymbol{W}}^{\ k}||_{\infty}+||\overline{\boldsymbol{R}}^{\ k}||_{\infty})\ \bigg].
\end{eqnarray}
From (\ref{as 22}), and Assumption (\ref{Assumption 1}), (\ref{as 35}) yields,
\begin{eqnarray}
\notag
||\overline{\tilde{\boldsymbol{Z}}}_{S^{c}}^{\ k}||_{\infty} &\leq & \frac{\psi}{\lambda}\left((1-\alpha)\frac{\alpha(\lambda+\rho)}{4}+(1-\alpha)(\lambda+\rho)+\frac{\alpha (\lambda+\rho)}{4} \right)\\
\notag
&=&  \frac{\psi}{\lambda}(\lambda+\rho)\left(\frac{\alpha}{2}-\frac{\alpha^2}{4}+1-\alpha\right)\\
\label{as 36}
&<&\psi \frac{\lambda+\rho}{\lambda}\left(\frac{2-\alpha}{2}\right).
\end{eqnarray}
From restriction on $\rho$, we have $||\overline{\tilde{\boldsymbol{Z}}}_{S^{c}}^{\ k}||_{\infty}<1$. Hence, we have $|\tilde{Z}_{ij}^{k}|<1$, for all $k=1,2,\ldots,K$, and $(i,j)\in S^{c}$. Elementwise, for any $e \in S^{c}$, (\ref{as 34}) yields,
\begin{eqnarray}
\notag
\overline{\tilde{\boldsymbol{M}}}_{e}^{\ k}&=& \frac{1-\psi}{\rho}\bigg( \boldsymbol{\Gamma}_{0eS}^{k}(\boldsymbol{\Gamma}_{0SS}^{k})^{-1}(\overline{\boldsymbol{W}}_{S}^{\ k}-\overline{\boldsymbol{R}}_{S}^{\ k})+\boldsymbol{\Gamma}_{0eS}^{k}(\boldsymbol{\Gamma}_{0SS}^{k})^{-1}(\lambda \overline{\tilde{\boldsymbol{Z}}}_{S}^{\ k}+\rho \overline{\tilde{\boldsymbol{M}}}_{S}^{\ k})\\
\label{as 37}
& &-(\overline{\boldsymbol{W}}_{e}^{\ k}-\overline{\boldsymbol{R}}_{e}^{\ k})\bigg)=\frac{1-\psi}{\rho}(a_{1e}^{k}+a_{2e}^{k}-a_{3e}^{k}).
\end{eqnarray}
Using $(a-b)^{2}\leq a^2+b^2$, we have
\begin{eqnarray*}
\bigg(\overline{\tilde{\boldsymbol{M}}}_{e}^{\ k}\bigg)^{2} &\leq & \left( \frac{1-\psi}{\rho} \right)^{2}((a_{1e}^{k}+a_{2e}^{k})^{2}+(a_{3e}^{k})^{2})\\
& = & \left( \frac{1-\psi}{\rho} \right)^{2}\left\{\left(\boldsymbol{\Gamma}_{0eS}^{k}(\boldsymbol{\Gamma}_{0SS}^{k})^{-1}(\overline{\boldsymbol{W}}_{S}^{\ k}-\overline{\boldsymbol{R}}_{S}^{\ k}+\lambda \overline{\tilde{\boldsymbol{Z}}}_{S}^{\ k}+\rho \overline{\tilde{\boldsymbol{M}}}_{S}^{\ k})\right)^{2}+(\overline{\boldsymbol{W}}_{e}^{\ k}-\overline{\boldsymbol{R}}_{e}^{\ k})^{2}\right\}.
\end{eqnarray*}
Summing over all $K$ models, we have
\begin{eqnarray*}
\sum_{k=1}^{K}\bigg(\overline{\tilde{\boldsymbol{M}}}_{e}^{\ k}\bigg)^{2}&\leq & \left( \frac{1-\psi}{\rho} \right)^{2}\sum_{k=1}^{K}\{(\boldsymbol{\Gamma}_{0eS}^{k}(\boldsymbol{\Gamma}_{0SS}^{k})^{-1}(\overline{\boldsymbol{W}}_{S}^{\ k}-\overline{\boldsymbol{R}}_{S}^{\ k}+\lambda \overline{\tilde{\boldsymbol{Z}}}_{S}^{\ k}+\rho \overline{\tilde{\boldsymbol{M}}}_{S}^{\ k}))^{2}+(\overline{\boldsymbol{W}}_{e}^{\ k}-\overline{\boldsymbol{R}}_{e}^{\ k})^{2}\}\\
& \leq & \left( \frac{1-\psi}{\rho} \right)^{2}\sum_{k=1}^{K} \bigg\{(||\overline{\boldsymbol{W}}^{\ k}||_{\infty}+||\overline{\boldsymbol{R}}^{\ k}||_{\infty}+\lambda+\rho )^{2}(\boldsymbol{\Gamma}_{0eS}^{k}(\boldsymbol{\Gamma}_{0SS}^{k})^{-1}\boldsymbol{1})^{2}+(||\overline{\boldsymbol{W}}^{\ k}||_{\infty}\\
& &+||\overline{\boldsymbol{R}}^{\ k}||_{\infty})^{2}\bigg\}\\
& \leq & \left( \frac{1-\psi}{\rho} \right)^{2}\left(((\lambda+\rho)(1+(\alpha/4)))^{2}\sum_{k=1}^{K}(\boldsymbol{\Gamma}_{0eS}^{k}(\boldsymbol{\Gamma}_{0SS}^{k})^{-1}\boldsymbol{1})^{2}+K(\lambda+\rho)^{2}(\alpha/4)^{2}\right).
\end{eqnarray*}
Since, $\sqrt{\sum_{k}z_{k}}\leq \sum_{k}\sqrt{z_{k}}$, we have
\begin{equation}\label{as 38}
\sqrt{\sum_{k=1}^{K}(\overline{\tilde{\boldsymbol{M}}}_{e}^{\ k})^{2}}\leq (1-\psi)\left(\frac{\lambda+\rho}{\rho}\right)\left((1+\frac{\alpha}{4})\sqrt{\sum_{k=1}^{K}(\boldsymbol{\Gamma}_{0eS}^{k}(\boldsymbol{\Gamma}_{0SS}^{k})^{-1}\boldsymbol{1})^{2}}+\frac{\alpha\sqrt{K}}{4}\right).
\end{equation}
From Assumption (\ref{Assumption 2}), we have $\sqrt{\sum_{k=1}^{K}(\tilde{M}_{e}^{k})^{2}}<1$.
\end{proof}

We now state some lemmas to control the sampling noise $\boldsymbol{W}^{k}$, $\boldsymbol{\Delta}^{k}$ and the remainder term $\boldsymbol{R}^{k}$.
\begin{lemma}\label{Lemma 2.4}
For any $\gamma>2$ and sampling size $n_k$, such that $\delta_{f}(n_k,p^{\gamma})\leq \underset{k=1,2,\ldots,K}{min}\frac{1}{v_{k0}}$, for all $k=1,2,\ldots,K$, we have
\begin{equation}\label{as 39}
\mathbb{P}(||\boldsymbol{W}^{k}||_{\infty}\geq \delta_{f}(n_k,p^{\gamma}))\leq \frac{1}{p^{\gamma-2}}\rightarrow 0,
\end{equation}
 for all $k=1,2,\ldots,K$.
\end{lemma}
\begin{proof}
From the definition of tail condition (Definition \ref{Definition 1.2.4}), we have
$$\mathbb{P}\bigg(\underset{i,j}{max}|W_{ij}^{k}|\geq \delta\bigg)\leq \frac{p^2}{f(n_k,\delta)},$$ for all $\delta \in (0,\underset{k=1,2,\ldots,K}{min}\frac{1}{v_{k0}}]$. Select $\delta=\delta_f(n_k,p^{\gamma})$. Then, we have the inequality provided $\delta_f(n_k,p^{\gamma})<\underset{k_1}{min}\frac{1}{v_{k_10}}, k=1,2,\ldots,K$.
\end{proof}
\begin{lemma}\label{Lemma 2.5}(Lemma 5 \citep{10.1214/11-EJS631})
Suppose
\begin{equation}\label{as 40}
||\boldsymbol{\Delta}^{k}||_{\infty}\leq \frac{1}{3d\kappa_{\boldsymbol{\Sigma}_0^{k}}}, k=1,2,\ldots,K.
\end{equation}
Then $\boldsymbol{J}^k=\sum_{i=0}^{\infty}(-1)^{i}((\boldsymbol{\Omega}^{k}_{0})^{-1}\boldsymbol{\Delta}^{k})^{i}$ satisfies
 \begin{equation}\label{as 41}
 |||(\boldsymbol{J}^k)^{T}|||_{\infty}\leq \frac{3}{2},
 \end{equation}
and $\boldsymbol{R}^k=(\boldsymbol{\Omega}^{k}_{0})^{-1}\boldsymbol{\Delta}^{k}(\boldsymbol{\Omega}^{k}_{0})^{-1}\boldsymbol{\Delta}^{k}\boldsymbol{J}^{k}(\boldsymbol{\Omega}^{k}_{0})^{-1}$ satisfies
  \begin{equation}\label{as 42}
  ||\boldsymbol{R}^k||_{\infty}\leq \frac{3}{2}d ||\boldsymbol{\Delta}^{k}||_{\infty}^{2}\kappa^{3}_{\boldsymbol{\Sigma}_{0}^{k}}.
  \end{equation}
\end{lemma}
\begin{lemma}\label{Lemma 2.6}
Suppose
\begin{equation}\label{as 43}
r_{k}=2\kappa_{\boldsymbol{\Gamma}_{0}^{k}}(||\boldsymbol{W}^{k}||_{\infty}+(\lambda+\rho))\leq \frac{1}{d}\ \underset{k_1}{min}\left\{ min\left\{\frac{1}{3\kappa_{\boldsymbol{\Sigma}_{0}^{k_1}}},\frac{1}{3\kappa^{3}_{\boldsymbol{\Sigma}_{0}^{k_1}}\kappa_{\boldsymbol{\Gamma}_{0}^{k_1}}}\right\}\right\},
\end{equation}
for all $k=1,2,\ldots,K$. Then
\begin{equation}\label{as 44}
||\boldsymbol{\Delta}^{k}||_{\infty}=||\tilde{\boldsymbol{\Omega}}^{k}-\boldsymbol{\Omega}_{0}^{k}||_{\infty}\leq r_k,
\end{equation}
for all $k=1,2,\ldots,K$.
\end{lemma}
\begin{proof}
Define the $l_{\infty}$ ball as $$\mathcal{B}(r_k)=\{\boldsymbol{B}_{S}:||\boldsymbol{B}_{S}||_{\infty}\leq r_k\},$$ with $r_k=2\kappa_{\boldsymbol{\Gamma}_{0}^{k}}(||\boldsymbol{W}^{k}||_{\infty}+(\lambda+\rho))$. Also define $$G(\boldsymbol{\Omega}_{S}^{k})=-[(\boldsymbol{\Omega}^{k})^{-1}]_{S}+\hat{\boldsymbol{\Sigma}_{S}^{k}}+\lambda \tilde{\boldsymbol{Z}}_{S}^k+\rho \tilde{\boldsymbol{M}}_{S}^k.$$
Note that $G(\boldsymbol{\Omega}_{S}^{k})$ is the gradient function of the restricted problem (\ref{as 17}). The gradient function will vanish at the optimum $\boldsymbol{\Omega}_{S}^{k}$, i.e., $G(\boldsymbol{\Omega}_{S}^{k})=0$. By the same arguement as Lemma $\ref{Lemma 2.2}$, the equation $$G(\boldsymbol{\Omega}_{S}^{k})=0,$$ has a unique solution, $\tilde{\boldsymbol{\Omega}}_{S}^{k},k=1,2,\ldots,K$. The idea is to define a continuous map $F$, such that any value from the convex set $\mathcal{B}(r_k)$ shall be mapped onto the set $\mathcal{B}(r_k)$ itself. Thus, we have $F(\mathcal{B}(r_k))\subset \mathcal{B}(r_k)$, and hence, from Brouwer's fixed point theorem and uniqueness of $G(\boldsymbol{\Omega}^{k}_{S})=0$, we conclude $||\boldsymbol{\Delta}^{k}||_{\infty}\leq r_k$.
Define the map $F:\mathbb{R}^{|S|}\rightarrow \mathbb{R}^{|S|}$ as $$F(\overline{\boldsymbol{\Delta}}^{\ k}_{S})=-(\boldsymbol{\Gamma}^{k}_{0S S})^{-1}\overline{G}(\boldsymbol{\Omega}^{k}_{0S}+\boldsymbol{\Delta}^{k}_{S})+\overline{\boldsymbol{\Delta}}^{k}_{S}.$$ By construction, $F(\overline{\boldsymbol{\Delta}}^{\ k}_{S})=\overline{\boldsymbol{\Delta}}^{\ k}_{S}$ iff $G(\boldsymbol{\Omega}^{k}_{0S}+\overline{\boldsymbol{\Delta}}^{\ k}_{S})=G(\tilde{\boldsymbol{\Omega}}^{k}_{S})=0$. Let $\boldsymbol{\Delta}^{k}\in \mathbb{R}^{p \times p}$ be a matrix such that $\boldsymbol{\Delta}^{k}_{S}\in \mathcal{B}(r_k)$, and $\boldsymbol{\Delta}^{k}_{S^{c}}=0$. Thus following the steps of the proof of Lemma 6 \citep{10.1214/11-EJS631}, we have $$F(\overline{\boldsymbol{\Delta}}^{\ k}_{S})=(\boldsymbol{\Gamma}^{k}_{0S S})^{-1}vec[((\boldsymbol{\Omega}_{0}^{k})^{-1}\boldsymbol{\Delta}^{k})^2\boldsymbol{J}^{k}(\boldsymbol{\Omega}_{0}^{k})^{-1}]_{S}-(\boldsymbol{\Gamma}^{k}_{0S S})^{-1}(\overline{\boldsymbol{W}}^{\ k}_{S}+\lambda \overline{\tilde{\boldsymbol{Z}}}_{S}^{\ k}+\rho \overline{\tilde{\boldsymbol{M}}}_{S}^{\ k})=T_1+T_2.$$ Now, $T_2$ is easy to control since $$||T_2||_{\infty}\leq \kappa_{\boldsymbol{\Gamma}^{k}_{0}}(||\boldsymbol{W}^{k}||_{\infty}+(\lambda+\rho))=\frac{r_k}{2}.$$
For $T_1$, we have $$||T_{1}||_{\infty}\leq \kappa_{\boldsymbol{\Gamma}^{k}_{0}} ||vec[((\boldsymbol{\Omega}_{0}^{k})^{-1}\boldsymbol{\Delta}^{k})^2\boldsymbol{J}^{k}(\boldsymbol{\Omega}_{0}^{k})^{-1}]_{S}||_{\infty}\leq \kappa_{\boldsymbol{\Gamma}^{k}_{0}} ||\boldsymbol{R}^{k}||_{\infty}.$$ 
Now, since $\boldsymbol{\Delta}^{k}$ is such that $||\boldsymbol{\Delta}^{k}||_{\infty}\leq r_{k}\leq \frac{1}{d}\ \underset{k_1}{min}\ \frac{1}{3\kappa_{\boldsymbol{\Sigma}_{0}^{k_1}}}\leq \frac{1}{3d\kappa_{\boldsymbol{\Sigma}_{0}^{k}}}$, we have $||\boldsymbol{R}^k||_{\infty}\leq \frac{3}{2}d ||\boldsymbol{\Delta}^{k}||_{\infty}^{2}\kappa^{3}_{\boldsymbol{\Sigma}_{0}^{k}}$. Also $r_{k}\leq \frac{1}{d}\ \underset{k_1}{min}\frac{1}{3\kappa_{\boldsymbol{\Sigma}_{0}^{k_{1}}}^{3}\kappa_{\boldsymbol{\Gamma}_{0}^{k_1}}}<\frac{1}{3d\kappa_{\boldsymbol{\Sigma}_{0}^{k}}^{3}\kappa_{\boldsymbol{\Gamma}_{0}^{k}}}$, we have
$$||T_1||_{\infty}\leq \frac{r_k}{2}.$$ Thus, we prove that $F(\overline{\boldsymbol{\Delta}}^{\ k}_{S})\in \mathcal{B}(r_k)$ if $\boldsymbol{\Delta}^{\ k}_{S} \in \mathcal{B}(r_k)$. Hence, $F(\mathcal{B}(r_k))\subset \mathcal{B}(r_k)$ and we have (\ref{as 44}).
\end{proof}
\begin{lemma}\label{Lemma 2.7}(Lemma 7 \citep{10.1214/11-EJS631}) Suppose (\ref{as 43}) holds for all $k=1,2,\ldots,K$, and $\omega_{min}^{k}$ is the minimum absolute value of the non-zero entries from $\boldsymbol{\Omega}^{k}_{0}$, and is lower bounded by 
\begin{equation}\label{as 45}
\omega_{min}^{k}\geq 4\kappa_{\boldsymbol{\Gamma}_{0}^{k}}(||\boldsymbol{W}^{k}||_{\infty}+(\lambda+\rho)).
\end{equation}
Then, $sign(\tilde{\boldsymbol{\Omega}}^{\ k}_{S})=sign(\boldsymbol{\Omega}^{\ k}_{0S})$, for all $k=1,2,\ldots,K$.
\end{lemma}
We now return to the proof of Theorem \ref{Theorem 2.1}. Define $\mathcal{A}_{k}=\{||\boldsymbol{W}^{k}||_{\infty}\leq \delta_{f}(n_k,p^{\gamma})\}$, for $k=1,2,\ldots,K$ and some $\gamma>2$. Then, if $\delta_{f}(n_k,p^{\gamma})\leq \underset{k}{min}(1/v_{k0})$, for all $k=1,2,\ldots,K$, we have $\mathbb{P}(\mathcal{A}_{k})\geq 1-\frac{1}{p^{\gamma-2}}\rightarrow 1$. Also, $\delta_{f}(n_k,p^{\gamma})\leq \underset{k}{min}(1/v_{k0})$, for all $k=1,2,\ldots,K$ implies $n_k>n_f(\underset{k}{min}(1/v_{k0}),p^{\gamma})$, for all $k$. Let $(\lambda+\rho)=\frac{8}{\alpha}\delta$, where $\delta=\underset{k}{max}\ \delta_{f}(n_k,p^{\gamma})$. Under event $\mathcal{A}_{k}$, we have $||\boldsymbol{W}^{k}||_{\infty}\leq \delta =\frac{\alpha}{8}(\lambda+\rho)$, for all $k=1,2,\ldots,K$. It remains to show that $||\boldsymbol{R}^{k}||_{\infty}\leq \frac{\alpha}{8}(\lambda+\rho)$.

Note that (\ref{as 12}) implies
\begin{equation}\label{as 46}
\delta<\frac{1}{6d(1+(8/ \alpha))\kappa_{\boldsymbol{\Gamma}_{0}^{k}}}\underset{k_1}{min}\left \{min\left \{\frac{1}{\kappa_{\boldsymbol{\Sigma}_{0}^{k_1}}},\frac{1}{\kappa^{3}_{\boldsymbol{\Sigma}_{0}^{k_1}}\kappa_{\boldsymbol{\Gamma}_{0}^{k_1}}}\right \}\right\}.
\end{equation}
Thus,
\begin{eqnarray}\label{as 47}
\notag
2\kappa_{\boldsymbol{\Gamma}_{0}^{k}}(||\boldsymbol{W}^{k}||_{\infty}+(\lambda+\rho))&\leq& 2\kappa_{\boldsymbol{\Gamma}_{0}^{k}}(1+(8/ \alpha))\delta\\
&\leq & \frac{1}{d} \underset{k_1}{min}\left \{min\left \{\frac{1}{3\kappa_{\boldsymbol{\Sigma}_{0}^{k_1}}},\frac{1}{3\kappa^{3}_{\boldsymbol{\Sigma}_{0}^{k_1}}\kappa_{\boldsymbol{\Gamma}_{0}^{k_1}}}\right \}\right\}.
\end{eqnarray}
 Hence, from Lemma \ref{Lemma 2.6}, we have 
 \begin{equation}\label{as 48}
 ||\boldsymbol{\Delta}^{k}||_{\infty}\leq r_k\leq 2\kappa_{\boldsymbol{\Gamma}_{0}^{k}}(1+(8/ \alpha))\delta,
 \end{equation}
 for all $k=1,2,\ldots,K$. Thus, from Lemma \ref{Lemma 2.5}, we have  $||\boldsymbol{R}^k||_{\infty}\leq \frac{3}{2}d ||\boldsymbol{\Delta}^{k}||_{\infty}^{2}\kappa^{3}_{\boldsymbol{\Sigma}_{0}^{k}}$. Substituting $||\boldsymbol{\Delta}^{k}||_{\infty}\leq r_k\leq \frac{1}{d}\underset{k_1}{min}\left \{\frac{1}{3\kappa^{3}_{\boldsymbol{\Sigma}_{0}^{k_1}}\kappa_{\boldsymbol{\Gamma}_{0}^{k_1}}}\right \}$, which implies $||\boldsymbol{\Delta}^{k}||_{\infty}\leq \frac{1}{3d\kappa^{3}_{\boldsymbol{\Sigma}_{0}^{k}}\kappa_{\boldsymbol{\Gamma}_{0}^{k}}}$, for all $k=1,2,\ldots,K$, we have $||\boldsymbol{R}^{k}||_{\infty}\leq \frac{\alpha}{8}(\lambda+\rho)$. Hence, the conditions for Lemma \ref{Lemma 2.2} are satisfied. Thus, the strict dual feasibility condition holds, allowing $\tilde{\boldsymbol{\Omega}}^{k}=\hat{\boldsymbol{\Omega}}^{k}$, for all $k=1,2,\ldots,K$. Hence, in $\{\mathcal{A}_{k},k=1,2,\ldots,K\}$, we have $\hat{\boldsymbol{\Omega}}^{k}$, satisfying (\ref{as 48}), for all $k=1,2,\ldots,K$. Also, we have $\tilde{\boldsymbol{\Omega}}^{k}_{S^{c}}=\hat{\boldsymbol{\Omega}}^{k}_{S^{c}}=0$, for all $k=1,2,\ldots,K$. However, the above statements hold if $\mathcal{A}_{k}$ holds for all $k=1,2,\ldots,K$. Now, we know $\mathbb{P}(\mathcal{A}_{k})\geq 1-\frac{1}{p^{\gamma-2}}$, for all $k=1,2,\ldots,K$. Hence, by Boole's inequality, we have $\mathbb{P}(\bigcap_{k=1}^{K}\mathcal{A}_{k})\geq 1-\frac{K}{p^{\gamma-2}}\rightarrow 1$, for finite $K$. Hence, we have the theorem.
\end{proof}
\begin{Remark}\label{Remark 1}
The penalty parameters $\lambda$ and $\rho$ are taken to be of the same order. In that case, $\rho=m\lambda$. Then, for the above theorem to hold, the conditions on $\lambda$, $\rho$ and the restriction of $\lambda+\rho=(8/\alpha)\delta$ can be further simplified as $\lambda>0,0<m \leq \frac{2-2\psi+\alpha\psi}{(2-\alpha)\psi}$, and $(1+m)\lambda=(8/\alpha)\delta$.
\end{Remark}

\begin{Remark}\label{Remark 2}
    If the samples satisfy the sub-Gaussian condition (Definition \ref{Definition 1.2.2}), $\delta_{f}(n_{k},p^{\gamma})$ satisfies (\ref{as 6}). In that case, from Theorem \ref{Theorem 2.1}, we have,
    \begin{equation}\label{as 49}
        \mathbb{P}(||\hat{\boldsymbol{\Omega}}^{k}-\boldsymbol{\Omega}_{0}^{k}||_{\infty}\leq 2 \kappa_{\boldsymbol{\Gamma}_{0}^{k}}(1+\frac{8}{\alpha})\delta)\geq 1-\frac{K}{p^{\gamma-2}},
    \end{equation}
    where
    \begin{equation}\label{as 50}
        \delta=\underset{k=1,2,\ldots,K}{max}\ 8\sqrt{2}(1+12K_{1}^{2})\underset{i}{max}\ \Sigma_{0ii}^{k}\sqrt{\frac{log\ 4p^{\gamma}}{n_k}},
    \end{equation}
    and the sample sizes $n_k$ should satisfy
    \begin{equation}\label{as 51}
        n_{k}>c_{1k}d^{2}(1+(8/\alpha))^{2}log(4p^{\gamma}),
    \end{equation}
    for all $k=1,2,\ldots,K$, where 
    \begin{equation*}
        c_{1k}= \bigg(48\sqrt{2}(1+12K_{1}^{2})\kappa_{\boldsymbol{\Gamma}_{0}^{k}}\ \underset{i}{max}\ \Sigma_{0ii}^{k}\underset{k_{1}=1,2,\ldots,K}{max}\bigg(max\bigg\{ \kappa_{\boldsymbol{\Sigma}_{0}^{k_1}},\kappa_{\boldsymbol{\Sigma}_{0}^{k_1}}^{3}\kappa_{\boldsymbol{\Gamma}_{0}^{k_1}}\bigg\}\bigg)\bigg)^{2}.
    \end{equation*}
    From Assumption \ref{Assumption 3}, with probability greater than $1-\frac{K}{p^{\gamma-2}}$, we have,
    \begin{equation}\label{as 52}
        ||\hat{\boldsymbol{\Omega}}^{k}-\boldsymbol{\Omega}_{0}^{k}||_{\infty}\leq 16\sqrt{2}\kappa_{\boldsymbol{\Gamma}_{0}^{k}}(1+(8/\alpha))(1+12K_{1}^{2})\underset{k_{1}=1,2,\ldots,K}{max}\ \bigg(\underset{i}{max}\ \Sigma_{0ii}^{k_1}\sqrt{\frac{log(4p^{\gamma})}{n_{k_1}}}\bigg),
    \end{equation}
    for all $k=1,2,\ldots,K$. Hence, when $\kappa_{\boldsymbol{\Sigma}_{0}^{k_1}},\kappa_{\boldsymbol{\Gamma}_{0}^{k}},\alpha$ remains constant as a function of $n,p,d$, for all $k=1,2,\ldots,K$, we have $||\hat{\boldsymbol{\Omega}}^{k}-\boldsymbol{\Omega}_{0}^{k}||_{\infty}=O\bigg(\sqrt{\frac{log\ p}{n}}\bigg)$ with high probability by taking each sample size $n_{k}=\psi(d^{2}log(4p^{\gamma}))$.
\end{Remark}

It is stated by part 2 of Theorem \ref{Theorem 2.1} that the edge set of the estimated precision matrices, $E(\hat{\boldsymbol{\Omega}}^{k})$ for each population $k$ is a subset of the actual precision matrices $E(\boldsymbol{\Omega}^{k}_{0})$. The edge sets $E(\boldsymbol{\Omega}^{k}_{0})$ are equal. Here $E(\hat{\boldsymbol{\Omega}}^{k})$ contains all the edges $(i,j)$, whenever $\Omega_{0ij}^{k}>2 \kappa_{\boldsymbol{\Gamma}_{0}^{k}}(1+\frac{8}{\alpha})\delta$. However, it can be proved that a finer adjustment of the sample size $n_{k}$ for all $k=1,2,\ldots, K$ will be sufficient for model selection consistency for all population models. Define the event for population $k=1,2,\ldots,K$ as
$$\mathcal{M}(\hat{\boldsymbol{\Omega}}^{k},\boldsymbol{\Omega}^{k}_{0})=\{ sign(\hat{\Omega}_{ij}^{k})=sign(\Omega_{0,ij}^{k})),(i,j)\in E(\boldsymbol{\Omega}^{k}_{0})\}.$$
Then we have the following theorem:
\begin{theorem}\label{Theorem 2.8}
    Under similar conditions of Theorem \ref{Theorem 2.1}, if for each population $k=1,2,\ldots,K$, the sample size $n_k$ satisfies
    \begin{eqnarray}\label{as 53}
    \notag
       n_k&>&n_{f}\bigg(min\bigg\{\frac{1}{\kappa_{\boldsymbol{\Gamma}_{0}^{k}}6d(1+(8/\alpha))}\ \underset{k_1=1,2,\ldots,K}{min}\bigg\{\frac{1}{max\{\kappa_{\boldsymbol{\Sigma}_{0}^{k_1}},\kappa^{3}_{\boldsymbol{\Sigma}_{0}^{k_1}}\kappa_{\boldsymbol{\Gamma}_{0}^{k_1}}\}}\bigg\}\\
       & &,\underset{k_1=1,2,\ldots,K}{min}\bigg\{ \frac{1}{v_{k_10}}\bigg\},\frac{1}{4(1+(8/\alpha))}\underset{k_1=1,2,\ldots,K}{min}\bigg\{ \frac{\omega_{min}^{k_1}}{\kappa_{\boldsymbol{\Gamma}_{0}^{k_1}}}\bigg\}\bigg\},p^{\gamma}\bigg)
    \end{eqnarray}
    , the estimators $\hat{\boldsymbol{\Omega}}=\{\hat{\boldsymbol{\Omega}}^{k},k=1,2,\ldots,K\}$ satisfies 
    \begin{equation}\label{as 54}
        \mathbb{P}\bigg(\mathcal{M}(\hat{\boldsymbol{\Omega}}^{k},\boldsymbol{\Omega}^{k}_{0}) ,\forall k=1,2,\ldots,K\bigg)\geq 1-\frac{K}{p^{\gamma-2}}\rightarrow 1,\text{for finite }K.
    \end{equation}

\end{theorem}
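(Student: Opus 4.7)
The plan is to inherit the entire scaffolding built for Theorem \ref{Theorem 2.1} and then upgrade the conclusion from $\ell_{\infty}$ closeness to sign consistency by invoking Lemma \ref{Lemma 2.7}. First I would reuse the high-probability event $\bigcap_{k=1}^{K}\mathcal{A}_{k}$ from that proof, which ensures that the primal-dual witness construction succeeds for every population $k$; in particular $\hat{\boldsymbol{\Omega}}^{k}=\tilde{\boldsymbol{\Omega}}^{k}$ with $\hat{\boldsymbol{\Omega}}_{S^{c}}^{k}=0$ for every $k$. This already guarantees that the off-support entries of $\hat{\boldsymbol{\Omega}}^{k}$ agree in sign with those of $\boldsymbol{\Omega}_{0}^{k}$ (both are zero), so the only remaining task is to recover the signs on $S$.

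That step is precisely the content of Lemma \ref{Lemma 2.7}, which requires the minimum non-zero magnitude to satisfy $\omega_{min}^{k}\geq 4\kappa_{\boldsymbol{\Gamma}_{0}^{k}}(\|\boldsymbol{W}^{k}\|_{\infty}+(\lambda+\rho))$ for every $k$. On $\mathcal{A}_{k}$ we have $\|\boldsymbol{W}^{k}\|_{\infty}\leq \delta$, and the standing choice $\lambda+\rho=(8/\alpha)\delta$ from Theorem \ref{Theorem 2.1} gives $\|\boldsymbol{W}^{k}\|_{\infty}+(\lambda+\rho)\leq (1+(8/\alpha))\delta$. A sufficient condition, uniformly in $k$, is therefore
$$
\delta \;\leq\; \frac{1}{4(1+(8/\alpha))}\,\underset{k_{1}=1,2,\ldots,K}{min}\bigg\{\frac{\omega_{min}^{k_{1}}}{\kappa_{\boldsymbol{\Gamma}_{0}^{k_{1}}}}\bigg\}.
$$
Next I would translate this $\delta$-bound into a lower bound on each $n_{k}$ via the definition of $\delta_{f}$, and this is exactly the new third argument of the inner $min$ appearing in (\ref{as 53}). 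The first two arguments are carried over verbatim from (\ref{as 12}) and continue to guarantee, respectively, the $\ell_{\infty}$-bounds on $\boldsymbol{\Delta}^{k}$ and $\boldsymbol{R}^{k}$ required by Lemmas \ref{Lemma 2.5}--\ref{Lemma 2.6}, and the validity of the tail-condition domain $(0, \underset{k_{1}}{min}\ 1/v_{k_{1}0}]$ required by Lemma \ref{Lemma 2.4}.

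For the probability bound, Lemma \ref{Lemma 2.4} yields $\mathbb{P}(\mathcal{A}_{k})\geq 1-p^{-(\gamma-2)}$ for every $k$, and Boole's inequality over $k=1,\ldots,K$ gives $\mathbb{P}(\bigcap_{k}\mathcal{A}_{k})\geq 1-K/p^{\gamma-2}\to 1$ for finite $K$. On this intersection event, combining $\tilde{\boldsymbol{\Omega}}^{k}=\hat{\boldsymbol{\Omega}}^{k}$, $\hat{\boldsymbol{\Omega}}_{S^{c}}^{k}=0$, and Lemma \ref{Lemma 2.7}'s conclusion $sign(\tilde{\boldsymbol{\Omega}}_{S}^{k})=sign(\boldsymbol{\Omega}_{0S}^{k})$ delivers $\mathcal{M}(\hat{\boldsymbol{\Omega}}^{k},\boldsymbol{\Omega}_{0}^{k})$ for every $k$, which is exactly (\ref{as 54}).

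The main obstacle is essentially bookkeeping rather than mathematical depth: one has to verify that a single choice of $\delta$ simultaneously satisfies the three distinct constraints -- the primal-dual feasibility bound reused from Theorem \ref{Theorem 2.1}, the tail-condition validity range, and the new $\omega_{min}$-type requirement coming from Lemma \ref{Lemma 2.7} -- and then invert the resulting $\delta$-bound into the joint sample-size condition (\ref{as 53}). Because only the third constraint is new compared with Theorem \ref{Theorem 2.1}, the argument amounts to a short refinement of the earlier proof rather than a new line of reasoning.
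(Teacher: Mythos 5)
Your proposal is correct and matches the argument the paper intends: the paper omits the proof, deferring to Theorem 2 of Ravikumar et al.\ (2011), whose proof is exactly your reconstruction --- reuse the primal-dual witness event $\bigcap_k\mathcal{A}_k$ from Theorem \ref{Theorem 2.1}, note that the third term in (\ref{as 53}) forces $\delta\leq\frac{1}{4(1+(8/\alpha))}\min_{k_1}\{\omega_{min}^{k_1}/\kappa_{\boldsymbol{\Gamma}_{0}^{k_1}}\}$ so that (\ref{as 45}) holds, and then apply Lemma \ref{Lemma 2.7} together with the union bound. No gaps; the bookkeeping of the three constraints in the inner minimum is exactly as you describe.
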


The proof follows the same line of proof in Theorem 2 of \cite{10.1214/11-EJS631}. Hence, we omit the proof here.

\begin{Remark}\label{Remark 2.1}
    If the samples satisfy the sub-Gaussian condition (Definition \ref{Definition 1.2.2}), from (\ref{as 53}), we have 
    $$n_{k}=\psi \bigg(\bigg(d^{2}+\bigg(\underset{k_1=1,2,\ldots,K}{max}\ \omega^{k_1}_{min}\bigg)^{-2}\bigg)log(4p^{\gamma})\bigg),$$
    for all $k=1,2,\ldots,K$. Thus, if $\underset{k_1=1,2,\ldots,K}{max}\ \omega^{k_1}_{min}=\psi\bigg(\sqrt{\frac{log\ p}{n}}\bigg)$ is satisfied, then sample sizes of $n_{k}=\psi(d^2log(4p^{\gamma}))$ is sufficient for ensuring model consistency with high probability. 
\end{Remark}

Theorem \ref{Theorem 2.1} lower bounds the number of samples from each class. The next theorem inverts the above theorem to give an upper bound on the degree of vertices for each graph when sample sizes $n_k$ are given.
\begin{theorem}\label{Theorem 2.9}
Suppose $\boldsymbol{x}_{i}^{k}\in \mathbb{R}^{p}$ are samples from population $k$, with mean $0$ and covariance matrix $\boldsymbol{\Sigma}_{0}^{k}$, $k=1,2,\ldots,K$. Suppose the distributions satisfy the irrepresentability condition (Assumption \ref{Assumption 1}) and between-group irrepresentability condition (Assumption \ref{Assumption 2}) with an $\alpha\in (0,1]$ and a $\psi\in (0,1)$. Let $\hat{\boldsymbol{\Omega}}=\{\hat{\boldsymbol{\Omega}}^{k},k=1,2,\ldots,K\}$ be the unique solutions to the optimizing problem (\ref{as 11}) with tuning parameter $\lambda+\rho=\frac{8}{\alpha}\delta$, where $\lambda>0,\rho \leq \frac{\lambda}{\psi(2-\alpha)}(2-2\psi+\alpha\psi)$, and $\delta=\underset{k}{max}\ \delta_k$, with each $\delta_{k}>0$. Suppose that
\begin{equation}\label{as 55}
d\leq \frac{1}{\kappa_{\boldsymbol{\Sigma}_{0}^{k_1}}}\ min\left \{1,\frac{1}{\kappa^{2}_{\boldsymbol{\Sigma}_{0}^{k_1}}\kappa_{\boldsymbol{\Gamma}_{0}^{k_1}}}\right \}\frac{1}{6(1+(8/\alpha))\underset{k}{max}\{\delta_k \kappa_{\boldsymbol{\Gamma}_{0}^{k}}\}},
\end{equation}
for all $k_1=1,2,\ldots,K$ is satisfied. Then, on the set $\{ ||\hat{\boldsymbol{\Sigma}}^{k}-\boldsymbol{\Sigma}_{0}^{k}||_{\infty}\leq \delta,k=1,2,\ldots,K\}$, it holds
\begin{enumerate}
\item $\hat{\boldsymbol{\Omega}}^{k}_{S^c}=\boldsymbol{\Omega}^{k}_{0S^c}$.
\item $||\hat{\boldsymbol{\Omega}}^{k}-\boldsymbol{\Omega}_{0}^{k}||_{\infty}\leq 2 \kappa_{\boldsymbol{\Gamma}_{0}^{k}}(1+\frac{8}{\alpha})\delta$, for all $k=1,2,\ldots,K$.
\end{enumerate}
\end{theorem}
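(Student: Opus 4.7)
The plan is to follow the primal-dual witness argument from the proof of Theorem \ref{Theorem 2.1} essentially verbatim, replacing only the probabilistic event $\mathcal{A}_k$ controlled by Lemma \ref{Lemma 2.4} with the deterministic hypothesis $||\hat{\boldsymbol{\Sigma}}^k - \boldsymbol{\Sigma}_0^k||_\infty \le \delta$ that this theorem directly assumes. The degree bound (\ref{as 55}) is the algebraic inversion of the sample-size bound (\ref{as 12}): with $\lambda+\rho = (8/\alpha)\delta$, both conditions encode the inequality (\ref{as 46}) that drives the proof of Theorem \ref{Theorem 2.1}.

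First I would observe that, under the assumed event $\{||\boldsymbol{W}^k||_\infty \le \delta, k=1,\ldots,K\}$, the choice $\lambda+\rho = (8/\alpha)\delta$ gives $||\boldsymbol{W}^k||_\infty \le (\alpha/8)(\lambda+\rho)$. Then I would verify that condition (\ref{as 55}) implies
\[
2\kappa_{\boldsymbol{\Gamma}_0^k}\bigl(||\boldsymbol{W}^k||_\infty + (\lambda+\rho)\bigr)
\;\le\; 2\kappa_{\boldsymbol{\Gamma}_0^k}(1+8/\alpha)\delta
\;\le\; \frac{1}{d}\min_{k_1}\min\!\Bigl\{\tfrac{1}{3\kappa_{\boldsymbol{\Sigma}_0^{k_1}}},\,\tfrac{1}{3\kappa_{\boldsymbol{\Sigma}_0^{k_1}}^3 \kappa_{\boldsymbol{\Gamma}_0^{k_1}}}\Bigr\},
\]
which is precisely the hypothesis required by Lemma \ref{Lemma 2.6}. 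This verification is routine bookkeeping: solve (\ref{as 55}) for the product $d\kappa_{\boldsymbol{\Gamma}_0^k}\delta_k$ and absorb the indices via $\delta = \max_k \delta_k$.

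Next I would construct the primal-dual witness $(\tilde{\boldsymbol{\Omega}}^k, \tilde{\boldsymbol{Z}}^k, \tilde{\boldsymbol{M}}^k)$ according to (\ref{as 17})--(\ref{as 19}), exactly as in Theorem \ref{Theorem 2.1}. Lemma \ref{Lemma 2.6} then yields $||\boldsymbol{\Delta}^k||_\infty \le 2\kappa_{\boldsymbol{\Gamma}_0^k}(1+8/\alpha)\delta$, and Lemma \ref{Lemma 2.5} upgrades this to $||\boldsymbol{R}^k||_\infty \le (\alpha/8)(\lambda+\rho)$, since the same bound also satisfies $||\boldsymbol{\Delta}^k||_\infty \le 1/(3d\kappa_{\boldsymbol{\Sigma}_0^k}^3 \kappa_{\boldsymbol{\Gamma}_0^k})$ by Step~2. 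With both $||\boldsymbol{W}^k||_\infty$ and $||\boldsymbol{R}^k||_\infty$ bounded by $(\alpha/8)(\lambda+\rho)$, Lemma \ref{Lemma 2.3} certifies the strict dual feasibility conditions (\ref{as 20})--(\ref{as 21}), so the witness succeeds and $\tilde{\boldsymbol{\Omega}}^k = \hat{\boldsymbol{\Omega}}^k$ for every $k$. Conclusion~(1) follows from $\tilde{\boldsymbol{\Omega}}^k_{S^c} = 0 = \boldsymbol{\Omega}_{0,S^c}^k$ by construction, and conclusion~(2) is the $\ell_\infty$ bound on $\boldsymbol{\Delta}^k$ obtained above.

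There is essentially no new obstacle, since all the heavy machinery (the construction of the witness, the control of $\boldsymbol{R}^k$ via the Neumann expansion $\boldsymbol{J}^k$, and the verification of strict dual feasibility) has been established and reused from Theorem \ref{Theorem 2.1}. The only non-trivial step is the algebraic confirmation that (\ref{as 55}) is the correct inversion of (\ref{as 12})---slightly intricate because of the interleaved $\min$ and $\max$ over models $k$ and $k_1$, but purely mechanical. The probabilistic content of Theorem \ref{Theorem 2.1}, namely Lemma \ref{Lemma 2.4} and the union bound over the events $\mathcal{A}_k$, is bypassed entirely because Theorem \ref{Theorem 2.9} restricts attention to the deterministic set $\{||\hat{\boldsymbol{\Sigma}}^k - \boldsymbol{\Sigma}_0^k||_\infty \le \delta, k=1,\ldots,K\}$.
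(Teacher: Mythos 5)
Your route is exactly the intended one: the paper gives no separate proof of Theorem \ref{Theorem 2.9} (it is presented as the inversion of Theorem \ref{Theorem 2.1}, in the spirit of the corresponding corollary in \cite{10.1214/11-EJS631}), and re-running the primal--dual witness construction deterministically on the event $\{||\hat{\boldsymbol{\Sigma}}^{k}-\boldsymbol{\Sigma}_{0}^{k}||_{\infty}\leq \delta\}$, with Lemma \ref{Lemma 2.4} bypassed and Lemmas \ref{Lemma 2.5}, \ref{Lemma 2.6} and \ref{Lemma 2.3} reused verbatim, is precisely how the omitted argument goes. Conclusions (1) and (2) then follow as you say, from $\tilde{\boldsymbol{\Omega}}^{k}_{S^{c}}=0$ and the bound on $\boldsymbol{\Delta}^{k}$.

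The one step you dismiss as ``routine bookkeeping'' is, however, not quite mechanical as you describe it, and as written it does not go through. Condition (\ref{as 55}) controls the quantity $d\,\underset{k}{max}\{\delta_{k}\kappa_{\boldsymbol{\Gamma}_{0}^{k}}\}$, whereas the hypothesis (\ref{as 43}) of Lemma \ref{Lemma 2.6}, after inserting $||\boldsymbol{W}^{k}||_{\infty}\leq\delta$ and $\lambda+\rho=(8/\alpha)\delta$, requires control of $d\,\kappa_{\boldsymbol{\Gamma}_{0}^{k}}\delta$ with the \emph{global} $\delta=\underset{k'}{max}\ \delta_{k'}$, i.e.\ of the cross products $\kappa_{\boldsymbol{\Gamma}_{0}^{k}}\delta_{k'}$ with possibly $k'\neq k$. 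One has $\underset{k}{max}\{\delta_{k}\kappa_{\boldsymbol{\Gamma}_{0}^{k}}\}\leq(\underset{k}{max}\ \delta_{k})(\underset{k}{max}\ \kappa_{\boldsymbol{\Gamma}_{0}^{k}})$ but not the reverse: if, say, $\delta_{1}\ll\delta_{2}$ while $\kappa_{\boldsymbol{\Gamma}_{0}^{1}}\gg\kappa_{\boldsymbol{\Gamma}_{0}^{2}}$, then $\kappa_{\boldsymbol{\Gamma}_{0}^{1}}\delta$ exceeds $\underset{k}{max}\{\delta_{k}\kappa_{\boldsymbol{\Gamma}_{0}^{k}}\}$ and the implication from (\ref{as 55}) to (\ref{as 46})/(\ref{as 43}) fails. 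So ``absorb the indices via $\delta=\underset{k}{max}\ \delta_{k}$'' hides a genuine gap rather than a triviality. The fix is minor but should be stated explicitly: either read (or strengthen) the degree condition with $(\underset{k}{max}\ \delta_{k})(\underset{k_{1}}{max}\ \kappa_{\boldsymbol{\Gamma}_{0}^{k_{1}}})$ in the denominator in place of $\underset{k}{max}\{\delta_{k}\kappa_{\boldsymbol{\Gamma}_{0}^{k}}\}$, or keep (\ref{as 55}) and weaken conclusion (2) and the event to the per-population quantities $\delta_{k}$. (This wrinkle originates in the statement of (\ref{as 55}) itself, which collapses to the single-population form when $K=1$, but a complete proof must address it rather than assert the inversion is purely algebraic.)
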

\begin{Remark}\label{Remark 3}
 When $\rho=m\lambda$, then Theorem \ref{Theorem 2.9} holds if we simplify our restrictions on $\lambda$, $\rho$, and $\lambda+\rho=\frac{8}{\alpha}\delta$ to $\lambda>0,0<m \leq \frac{2-2\psi+\alpha\psi}{(2-\alpha)\psi}$, and $(1+m)\lambda=(8/\alpha)\delta$.
\end{Remark}
\section{\large Results related to debiased estimates of group graphical lasso}\label{Section 3}
The joint estimators are subjected to bias due to optimization, including the penalty terms. To do any sort of statistical inference involving the estimators, it is first necessary to de-bias the estimators. For that purpose, we need to invert the KKT conditions corresponding to the optimization problem (\ref{as 11}). The KKT conditions corresponding to (\ref{as 11}) are 
\begin{equation}\label{as 56}
\hat{\boldsymbol{\Sigma}}^{k}-(\hat{\boldsymbol{\Omega}}^{k})^{-1}+\lambda\hat{\boldsymbol{Z}}^{k}+\rho\hat{\boldsymbol{M}}^{k}=0,
\end{equation}

for each $k=1,2,\ldots,K$, where $\hat{\boldsymbol{Z}}^{k}$ and $\hat{\boldsymbol{M}}^{k}$ are the
sub-differential corresponding to $||(\hat{\boldsymbol{\Omega}}^{k})^{-}||_{1}$,
and $\sum_{i\neq j}\sqrt{\sum_{k}(\hat{\Omega_{ij}}^{k})^{2}}$. The sub-differentials can be obtained from (\ref{as 14}) and (\ref{as 15}).
From Lemma \ref{Lemma 2.2}, there exist an unique solution of $\hat{\boldsymbol{\Omega}}^{k}\in\mathcal{S}_{++}^{p}$
satisfying (9) for $\lambda>0,\rho>0$, and $\hat{\boldsymbol{\Sigma}}^{k}$
having strictly positive diagonal elements. We then invert the KKT
conditions (9), by multiplying $\hat{\boldsymbol{\Omega}}^{k}$ on
both sides, and we have 
\begin{eqnarray}\label{as 57}
\notag
\hat{\boldsymbol{\Omega}}^{k}\hat{\boldsymbol{\Sigma}}^{k}\hat{\boldsymbol{\Omega}}^{k}-\hat{\boldsymbol{\Omega}}^{k}+\hat{\boldsymbol{\Omega}}^{k}(\lambda\hat{\boldsymbol{Z}}^{k}+\rho\hat{\boldsymbol{M}}^{k})\hat{\boldsymbol{\Omega}}^{k} & = & 0\\
\implies\hat{\boldsymbol{\Omega}}^{k}+\hat{\boldsymbol{\Omega}}^{k}(\lambda\hat{\boldsymbol{Z}}^{k}+\rho\hat{\boldsymbol{M}}^{k})\hat{\boldsymbol{\Omega}}^{k}-\boldsymbol{\Omega}_{0}^{k} & = & \boldsymbol{\zeta}^{k}+\boldsymbol{re}\boldsymbol{m}^{k},
\end{eqnarray}

where $\boldsymbol{W}^{k}=\hat{\boldsymbol{\Sigma}}^{k}-\boldsymbol{\Sigma}_{0}^{k}$
,$\boldsymbol{\zeta}^{k}=-\boldsymbol{\Omega}_{0}^{k}\boldsymbol{W}^{k}\boldsymbol{\Omega}_{0}^{k}$,
and $\boldsymbol{re}\boldsymbol{m}^{k}=-(\hat{\boldsymbol{\Omega}}^{k}-\boldsymbol{\Omega}_{0}^{k})\boldsymbol{W}^{k}\boldsymbol{\Omega}_{0}^{k}-(\hat{\boldsymbol{\Omega}}^{k}\hat{\boldsymbol{\Sigma}}^{k}-I)(\hat{\boldsymbol{\Omega}}^{k}-\boldsymbol{\Omega}_{0}^{k})$. Here $\hat{\boldsymbol{\Omega}}^{k}(\lambda\hat{\boldsymbol{Z}}^{k}+\rho\hat{\boldsymbol{M}}^{k})\hat{\boldsymbol{\Omega}}^{k}$
eliminates the bias. Using the KKT conditions, we can represent $\lambda\hat{\boldsymbol{Z}}^{k}+\rho\hat{\boldsymbol{M}}^{k}=(\hat{\boldsymbol{\Omega}}^{k})^{-1}-\hat{\boldsymbol{\Sigma}}^{k}$.
Hence, we have the de-biased group graphical lasso estimators for
each model $k$ as $\hat{\boldsymbol{\Omega}}_{d}^{k}=\hat{\boldsymbol{\Omega}}^{k}+\hat{\boldsymbol{\Omega}}^{k}\left((\hat{\boldsymbol{\Omega}}^{k})^{-1}-\hat{\boldsymbol{\Sigma}}^{k}\right)\hat{\boldsymbol{\Omega}}^{k}=2\hat{\boldsymbol{\Omega}}^{k}-\hat{\boldsymbol{\Omega}}^{k}\hat{\boldsymbol{\Sigma}}^{k}\hat{\boldsymbol{\Omega}}^{k}$.

The following lemma shall give a bound for the remainder terms $\boldsymbol{rem}^{k},k=1,2,\ldots, K$ under the sub-Gaussian condition (Definition \ref{Definition 1.2.2}) and the sub-Gaussian vector condition (Definition \ref{Definition 1.2.3}).
\begin{lemma}\label{Lemma 3.1}
Consider samples from population $k$, $\boldsymbol{x}_{i}^{k}$ with mean $0$ and covariance matrix $\boldsymbol{\Sigma}_{0}^{k}$, $k=1,2,\ldots,K$. Suppose the distributions satisfy the irrepresentability condition and between-group irrepresentability condition (Assumption \ref{Assumption 1} and Assumption \ref{Assumption 2}) with an $\alpha \in (0,1]$ and a $\psi \in (0,1)$, and $\boldsymbol{\Omega}^{k}_{0}=(\boldsymbol{\Sigma}_{0}^{k})^{-1}$. Let $\hat{\boldsymbol{\Omega}}=\{\hat{\boldsymbol{\Omega}}^{k},k=1,2,\ldots,K\}$ be the unique solutions to the optimizing problem (\ref{as 11}) with tuning parameter $\lambda+\rho=\frac{8}{\alpha}\delta$, where $\lambda>0,\rho \leq \frac{\lambda}{\psi(2-\alpha)}(2-2\psi+\alpha\psi)$, and $\delta=\underset{k}{max}\ \delta_k$, with each $\delta_{k}>0$, for some $\gamma>2$, with
\begin{equation}\label{as 58}
\delta_{k}=\delta_f(n_k,p^{\gamma})=8(1+12K_{1}^2)\ \underset{i}{max}\ \Sigma_{0ii}^{k}\sqrt{2\frac{log\ 4p^{\gamma}}{n_k}},
\end{equation}
for all $k=1,2,\ldots,K$, where $K_1$ is the sub-Gaussian parameter mentioned below. Suppose that
\begin{equation}\label{as 59}
d\leq \frac{1}{\kappa_{\boldsymbol{\Sigma}_{0}^{k}}}\ min\left \{1,\frac{1}{\kappa^{2}_{\boldsymbol{\Sigma}_{0}^{k}}\kappa_{\boldsymbol{\Gamma}_{0}^{k}}}\right \}\frac{1}{6(1+(8/\alpha))\underset{k_1}{max}\{\delta_{k_1} \kappa_{\boldsymbol{\Gamma}_{0}^{k_1}}\}},
\end{equation}
for all $k=1,2,\ldots, K$ is satisfied, giving an upper bound to the maximum degree of a vertex for each population $k$. Then, under Assumption \ref{Assumption 3} and Assumption \ref{Assumption 4}
\begin{enumerate}
\item if the samples satisfy Definition \ref{Definition 1.2.2} with $K_1=O(1)$, then
\begin{equation}\label{as 60}
||\boldsymbol{rem}^{k}||_{\infty}=O_{p}\bigg(\frac{1}{\alpha^2}\kappa_{\boldsymbol{\Gamma}_{0}^{k}}\ max\left\{ d^{3/2}\frac{log\ p}{n},\frac{1}{\alpha}\kappa_{\boldsymbol{\Gamma}_{0}^{k}}d^2\bigg(\frac{log\ p}{n}\bigg)^{3/2}\right \}\bigg).
\end{equation}
\item if the samples satisfy Definition \ref{Definition 1.2.3} with $K_1=O(1)$, then
\begin{equation}\label{as 61}
||\boldsymbol{rem}^{k}||_{\infty}=O_{p}\bigg(\frac{1}{\alpha^2}\kappa^{2}_{\boldsymbol{\Gamma}_{0}^{k}}\kappa_{\boldsymbol{\Sigma}_{0}^{k}}d\frac{log\ p}{n}\bigg).
\end{equation}
\end{enumerate}
\end{lemma}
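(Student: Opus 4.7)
The plan is to bound the two summands of $\boldsymbol{rem}^{k}=-\boldsymbol{\Delta}^{k}\boldsymbol{W}^{k}\boldsymbol{\Omega}_{0}^{k}-(\hat{\boldsymbol{\Omega}}^{k}\hat{\boldsymbol{\Sigma}}^{k}-\boldsymbol{I})\boldsymbol{\Delta}^{k}$ separately, where $\boldsymbol{\Delta}^{k}:=\hat{\boldsymbol{\Omega}}^{k}-\boldsymbol{\Omega}_{0}^{k}$. Using the decomposition $\hat{\boldsymbol{\Omega}}^{k}\hat{\boldsymbol{\Sigma}}^{k}-\boldsymbol{I}=\hat{\boldsymbol{\Omega}}^{k}\boldsymbol{W}^{k}+\boldsymbol{\Delta}^{k}\boldsymbol{\Sigma}_{0}^{k}$, this reduces to controlling the $\|\cdot\|_{\infty}$ (elementwise-max) norm of the three matrices $\boldsymbol{\Delta}^{k}\boldsymbol{W}^{k}\boldsymbol{\Omega}_{0}^{k}$, $\hat{\boldsymbol{\Omega}}^{k}\boldsymbol{W}^{k}\boldsymbol{\Delta}^{k}$, and $\boldsymbol{\Delta}^{k}\boldsymbol{\Sigma}_{0}^{k}\boldsymbol{\Delta}^{k}$. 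Each $(i,j)$-entry is a bilinear form $u^{\top}\boldsymbol{M}v$ with $u$ and $v$ at most $(d+1)$-sparse, because Theorem~\ref{Theorem 2.9} (applicable thanks to the degree bound (\ref{as 59})) forces $E(\hat{\boldsymbol{\Omega}}^{k})\subset S$ on the good event.

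Before estimating these bilinear forms I would record the sizes of the factors on a common high-probability event. From Theorem~\ref{Theorem 2.9} and Lemma~\ref{Lemma 1.1}, $\|\boldsymbol{W}^{k}\|_{\infty}=O_{p}(\sqrt{\log p/n})$ and $\|\boldsymbol{\Delta}^{k}\|_{\infty}=O_{p}(\kappa_{\boldsymbol{\Gamma}_{0}^{k}}\alpha^{-1}\sqrt{\log p/n})$; by Assumption~\ref{Assumption 3} combined with $(d+1)$-sparsity, $\|\boldsymbol{\Omega}_{0}^{k}e_{j}\|_{2}$, $\|\hat{\boldsymbol{\Omega}}^{k}e_{j}\|_{2}=O(\sqrt{d})$ and $\|\boldsymbol{\Delta}^{k}e_{j}\|_{2}=O_{p}(\sqrt{d}\,\|\boldsymbol{\Delta}^{k}\|_{\infty})$, with the $\ell_{1}$-analogues picking up an extra $\sqrt{d}$. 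The $\boldsymbol{\Sigma}_{0}^{k}$-piece is immediate by Cauchy--Schwarz, $|u^{\top}\boldsymbol{\Sigma}_{0}^{k}v|\leq\|\boldsymbol{\Sigma}_{0}^{k}\|_{\mathrm{op}}\|u\|_{2}\|v\|_{2}$, and yields an $O_{p}(\kappa_{\boldsymbol{\Gamma}_{0}^{k}}^{2}\alpha^{-2}d\log p/n)$ contribution that is dominated by the claimed rate.

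The heart of the argument, and the main obstacle, is controlling the bilinear forms in $\boldsymbol{W}^{k}$. For a deterministic $(d+1)$-sparse vector $c$ I would concentrate $\|\boldsymbol{W}^{k}c\|_{\infty}$ via Bernstein's inequality applied to the iid mean-zero summands $x_{t,m}^{k}(\boldsymbol{c}^{\top}\boldsymbol{x}_{t}^{k})-\mathbb{E}[\,\cdot\,]$. Under Case~2 (Definition~\ref{Definition 1.2.3}), $\boldsymbol{c}^{\top}\boldsymbol{x}^{k}$ is sub-Gaussian with parameter $O(\|c\|_{2})=O(1)$, so each product is sub-exponential with parameter $O(\kappa_{\boldsymbol{\Sigma}_{0}^{k}})$, and a union bound gives $\|\boldsymbol{W}^{k}\boldsymbol{\Omega}_{0}^{k}\|_{\infty}=O_{p}(\kappa_{\boldsymbol{\Sigma}_{0}^{k}}\sqrt{\log p/n})$; multiplying by $\|\boldsymbol{\Delta}^{k}_{i,\cdot}\|_{1}=O_{p}(d\,\kappa_{\boldsymbol{\Gamma}_{0}^{k}}\alpha^{-1}\sqrt{\log p/n})$ produces the $\kappa_{\boldsymbol{\Gamma}_{0}^{k}}^{2}\kappa_{\boldsymbol{\Sigma}_{0}^{k}}\alpha^{-2}d\log p/n$ rate, and the piece involving $c=\boldsymbol{\Delta}^{k}e_{j}$ is handled by an additional union bound over $\binom{p}{d}$ supports which only adds a $\log p$ factor inside the square root.

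Under Case~1 (Definition~\ref{Definition 1.2.2}), only marginal sub-Gaussianity is assumed, so $\boldsymbol{c}^{\top}\boldsymbol{x}^{k}$ is no longer automatically sub-Gaussian with parameter $O(\|c\|_{2})$; the best moment control available goes through $\|c\|_{1}$, which for $c=\boldsymbol{\Omega}_{0}^{k}e_{j}$ equals $O(\sqrt{d})$ and inflates the sub-exponential parameter of the Bernstein summands by a $\sqrt{d}$ factor, yielding $\|\boldsymbol{W}^{k}\boldsymbol{\Omega}_{0}^{k}\|_{\infty}=O_{p}(\sqrt{d\log p/n})$ and thence the $d^{3/2}\log p/n$ term. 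For the remaining piece $\hat{\boldsymbol{\Omega}}^{k}\boldsymbol{W}^{k}\boldsymbol{\Delta}^{k}$, where no factor carries a bounded-$\ell_{2}$ deterministic vector to exploit, I would fall back to the crude $|u^{\top}\boldsymbol{W}^{k}v|\leq\|u\|_{1}\|v\|_{1}\|\boldsymbol{W}^{k}\|_{\infty}$ with $\|u\|_{1}=O(d)$ and $\|v\|_{1}=O_{p}(d\sqrt{\log p/n})$, giving the $d^{2}(\log p/n)^{3/2}$ term. The stated $\max$ in (\ref{as 60}) simply records whichever of these two contributions dominates in the $(n,p,d)$-regime under consideration.
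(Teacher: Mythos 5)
Your overall frame matches the paper's: start from $\boldsymbol{rem}^{k}=-\boldsymbol{\Delta}^{k}\boldsymbol{W}^{k}\boldsymbol{\Omega}_{0}^{k}-(\hat{\boldsymbol{\Omega}}^{k}\hat{\boldsymbol{\Sigma}}^{k}-I)\boldsymbol{\Delta}^{k}$, invoke Theorem \ref{Theorem 2.9} (via the degree bound (\ref{as 59})) to get $\|\boldsymbol{\Delta}^{k}\|_{\infty}=O_{p}(\kappa_{\boldsymbol{\Gamma}_{0}^{k}}\alpha^{-1}\sqrt{\log p/n})$ together with row sparsity, and your Case 2 bounds (the $\boldsymbol{\Delta}^{k}\boldsymbol{W}^{k}\boldsymbol{\Omega}_{0}^{k}$ term via sub-Gaussianity of $\boldsymbol{\Omega}_{0i}^{k\top}\boldsymbol{x}^{k}$, plus the $\boldsymbol{\Sigma}_{0}^{k}$ piece) land within the claimed rate (\ref{as 61}), much as the paper's three-term expansion $\hat{\boldsymbol{\Omega}}^{k}\hat{\boldsymbol{\Sigma}}^{k}-I=\boldsymbol{W}^{k}\boldsymbol{\Delta}^{k}+\boldsymbol{\Sigma}_{0}^{k}\boldsymbol{\Delta}^{k}+\boldsymbol{W}^{k}\boldsymbol{\Omega}_{0}^{k}$ does (your remark that the union bound over supports "only adds a $\log p$ factor" is off — $\log\binom{p}{d}\asymp d\log p$ — but that slack can be absorbed or avoided by the deterministic bound $\|\boldsymbol{W}^{k}v\|_{\infty}\le\sqrt{d}\,\|\boldsymbol{W}^{k}\|_{\infty}\|v\|_{2}$, so this is minor).

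The genuine gap is in Case 1, in your treatment of $(\hat{\boldsymbol{\Omega}}^{k}\hat{\boldsymbol{\Sigma}}^{k}-I)\boldsymbol{\Delta}^{k}$. The crude bound $|u^{\top}\boldsymbol{W}^{k}v|\le\|u\|_{1}\|v\|_{1}\|\boldsymbol{W}^{k}\|_{\infty}$ with $\|u\|_{1}=O(d)$, $\|v\|_{1}=O_{p}(d\kappa_{\boldsymbol{\Gamma}_{0}^{k}}\alpha^{-1}\sqrt{\log p/n})$ and $\|\boldsymbol{W}^{k}\|_{\infty}=O_{p}(\sqrt{\log p/n})$ yields $O_{p}(\kappa_{\boldsymbol{\Gamma}_{0}^{k}}\alpha^{-1}d^{2}\log p/n)$, not the $d^{2}(\log p/n)^{3/2}$ you state (your displayed factors multiply to $d^{2}\log p/n$), and $d^{2}\log p/n$ strictly exceeds both terms inside the maximum in (\ref{as 60}) — by $\sqrt{d}$ relative to $d^{3/2}\log p/n$ and by $\sqrt{n/\log p}$ relative to $d^{2}(\log p/n)^{3/2}$ — so the proposed argument does not prove the lemma as stated. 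The missing idea is the paper's use of the KKT stationarity condition: write $\hat{\boldsymbol{\Omega}}^{k}\hat{\boldsymbol{\Sigma}}^{k}-I=\hat{\boldsymbol{\Omega}}^{k}\big(\hat{\boldsymbol{\Sigma}}^{k}-(\hat{\boldsymbol{\Omega}}^{k})^{-1}\big)=-\hat{\boldsymbol{\Omega}}^{k}(\lambda\hat{\boldsymbol{Z}}^{k}+\rho\hat{\boldsymbol{M}}^{k})$, whose middle factor has sup norm at most $\lambda+\rho\asymp\alpha^{-1}\sqrt{\log p/n}$ \emph{deterministically}. Bounding the second piece by $|||\hat{\boldsymbol{\Omega}}^{k}|||_{\infty}(\lambda+\rho)|||\boldsymbol{\Delta}^{k}|||_{\infty}$ with $|||\hat{\boldsymbol{\Omega}}^{k}|||_{\infty}\le 2d\kappa_{\boldsymbol{\Gamma}_{0}^{k}}(1+8/\alpha)\delta+\sqrt{d}\,\Lambda_{\max}(\boldsymbol{\Omega}_{0}^{k})$ is precisely what produces the two terms $\frac{\kappa_{\boldsymbol{\Gamma}_{0}^{k}}}{\alpha^{2}}d^{3/2}\frac{\log p}{n}$ and $\frac{\kappa_{\boldsymbol{\Gamma}_{0}^{k}}^{2}}{\alpha^{3}}d^{2}\big(\frac{\log p}{n}\big)^{3/2}$ in (\ref{as 60}); splitting instead into $\hat{\boldsymbol{\Omega}}^{k}\boldsymbol{W}^{k}\boldsymbol{\Delta}^{k}$, as you do, throws away the cancellation encoded in the optimality conditions and cannot be repaired under marginal sub-Gaussianity alone.
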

\begin{proof}
Inverting the KKT conditions, for each $k$, we have $$\hat{\boldsymbol{\Omega}}^{k}+\hat{\boldsymbol{\Omega}}^{k}(\lambda\hat{\boldsymbol{Z}}^{k}+\rho\hat{\boldsymbol{M}}^{k})\hat{\boldsymbol{\Omega}}^{k}-\boldsymbol{\Omega}_{0}^{k}  =  \boldsymbol{\zeta}^{k}+\boldsymbol{re}\boldsymbol{m}^{k},$$
where $\boldsymbol{W}^{k}=\hat{\boldsymbol{\Sigma}}^{k}-\boldsymbol{\Sigma}_{0}^{k}$
, $\boldsymbol{\zeta}^{k}=-\boldsymbol{\Omega}_{0}^{k}\boldsymbol{W}^{k}\boldsymbol{\Omega}_{0}^{k}$,
 and
\begin{equation}\label{as 62}
\boldsymbol{rem}^{k}=-(\hat{\boldsymbol{\Omega}}^{k}-\boldsymbol{\Omega}_{0}^{k})\boldsymbol{W}^{k}\boldsymbol{\Omega}_{0}^{k}-(\hat{\boldsymbol{\Omega}}^{k}\hat{\boldsymbol{\Sigma}}^{k}-I)(\hat{\boldsymbol{\Omega}}^{k}-\boldsymbol{\Omega}_{0}^{k}).
\end{equation}
Note that $$|||\hat{\boldsymbol{\Omega}}^{k}|||_{\infty}\leq |||\hat{\boldsymbol{\Omega}}^{k}-\boldsymbol{\Omega}_{0}^{k}|||_{\infty}+|||\boldsymbol{\Omega}_{0}^{k}|||_{1}\leq 2d \kappa_{\boldsymbol{\Gamma}_{0}^{k}}(1+\frac{8}{\alpha})\delta+\sqrt{d}\Lambda_{max}(\boldsymbol{\Omega}_{0}^{k}).$$
Define $\boldsymbol{\beta}^{k}=\hat{\boldsymbol{\Omega}}^{k}-\boldsymbol{\Omega}_{0}^{k}$. Also, from Theorem \ref{Theorem 2.9} part 1, we have $\hat{\boldsymbol{\Omega}}^{k}_{S^c}=\boldsymbol{\Omega}^{k}_{0S^c}$. Thus, $\hat{\boldsymbol{\Omega}}^{k}$ has at most $d$ non-zero entries per row. Hence,
\begin{equation*}
|||\boldsymbol{\beta}^{k}|||_{\infty}\leq d||\hat{\boldsymbol{\Omega}}^{k}-\boldsymbol{\Omega}_{0}^{k}||_{\infty}.
\end{equation*}
Again from Theorem \ref{Theorem 2.9} part 2, $|||\boldsymbol{\beta}^{k}|||_{\infty}\leq  2d\kappa_{\boldsymbol{\Gamma}_{0}^{k}}(1+(8/\alpha))\delta$, for all $k=1,2,\ldots,K$.Hence, from (\ref{as 62}), we have
\begin{eqnarray}\label{as 63}
\notag
||\boldsymbol{rem}^{k}||_{\infty} &=& ||\boldsymbol{\beta}^{k}\boldsymbol{W}^{k}\boldsymbol{\Omega}_{0}^{k}+\hat{\boldsymbol{\Omega}}^{k}(\hat{\boldsymbol{\Sigma}}^{k}-(\hat{\boldsymbol{\Omega}}^{k})^{-1})\boldsymbol{\beta}^{k}||_{\infty}\\
\notag
& \leq & |||\boldsymbol{\beta}^{k}|||_{\infty}||\boldsymbol{W}^{k}\boldsymbol{\Omega}_{0}^{k}||_{\infty}+|||\hat{\boldsymbol{\Omega}}^{k}|||_{\infty}(\lambda ||\hat{\boldsymbol{Z}}^{k}||_{\infty}+\rho ||\hat{\boldsymbol{M}}^{k}||_{\infty})||\boldsymbol{\beta}^{k}||_{\infty}\\
\notag
& \leq & 2d \kappa_{\boldsymbol{\Gamma}_{0}^{k}}(1+(8/\alpha))\delta ||\boldsymbol{W}^{k}\boldsymbol{\Omega}_{0}^{k}||_{\infty}+|||\hat{\boldsymbol{\Omega}}^{k}|||_{\infty}(\lambda+\rho)||\boldsymbol{\beta}^{k}||_{\infty}\\
\notag
& \leq &  2d \kappa_{\boldsymbol{\Gamma}_{0}^{k}}(1+(8/\alpha))\delta ||\boldsymbol{W}^{k}\boldsymbol{\Omega}_{0}^{k}||_{\infty}+(\lambda+\rho)||\boldsymbol{\beta}^{k}||_{\infty}(2d \kappa_{\boldsymbol{\Gamma}_{0}^{k}}(1+\frac{8}{\alpha})\delta\\
\notag
& &+\sqrt{d}\Lambda_{max}(\boldsymbol{\Omega}_{0}^{k}))\\
\notag
& \leq &  2d \kappa_{\boldsymbol{\Gamma}_{0}^{k}}(1+(8/\alpha))\delta ||\boldsymbol{W}^{k}\boldsymbol{\Omega}_{0}^{k}||_{\infty}+4 (\lambda+\rho) d^{2} \kappa^{2}_{\boldsymbol{\Gamma}_{0}^{k}}(1+(8/\alpha))^2\delta^2\\
\notag
& & +2(\lambda+\rho)d^{3/2}\kappa_{\boldsymbol{\Gamma}_{0}^{k}}(1+(8/\alpha))\delta \Lambda_{max}(\boldsymbol{\Omega}_{0}^{k})\\
\notag
& = & 2d \kappa_{\boldsymbol{\Gamma}_{0}^{k}}(1+(8/\alpha))\delta ||\boldsymbol{W}^{k}\boldsymbol{\Omega}_{0}^{k}||_{\infty}+\frac{8}{\alpha}  d^{2} \kappa^{2}_{\boldsymbol{\Gamma}_{0}^{k}}(1+(8/\alpha))^2\delta^3\\
\notag
& &+2 \frac{8}{\alpha} d^{3/2}\kappa_{\boldsymbol{\Gamma}_{0}^{k}}(1+(8/\alpha))\delta^2 \Lambda_{max}(\boldsymbol{\Omega}_{0}^{k})\\
&=& O\bigg(\frac{1}{\alpha}\kappa_{\boldsymbol{\Gamma}_{0}^{k}}\ max\left \{\delta d ||\boldsymbol{W}^{k}\boldsymbol{\Omega}_{0}^{k}||_{\infty},\frac{1}{\alpha^2} \kappa_{\boldsymbol{\Gamma}_{0}^{k}} d^{2} \delta^{3},\frac{1}{\alpha}d^{3/2}\delta^2 \Lambda_{max}(\boldsymbol{\Omega}_{0}^{k}) \right \} \bigg).
\end{eqnarray}
As a consequence of Lemma \ref{Lemma 1.1}, Assumption \ref{Assumption 4}, and boundedness of eigenvalues of $\boldsymbol{\Omega}_{0}^{k}$, if we assume $K_1=O(1)$, we have $||\hat{\boldsymbol{\Sigma}}^{k}-\boldsymbol{\Sigma}_{0}^{k}||_{\infty}=O_{p}(\sqrt{log\ p/n})$, for all $k=1,2,\ldots,K$. Thus, $\delta=O_{p}(\sqrt{log\ p/n})$. Then, under sub-Gaussian condition (Definition \ref{Definition 1.2.2}),
$$||\boldsymbol{W}^{k}\boldsymbol{\Omega}_{0}^{k}||_{\infty}\leq |||\boldsymbol{\Omega}_{0}^{k}|||_{\infty}||\boldsymbol{W}^{k}||_{\infty}\leq \sqrt{d}||\boldsymbol{\Omega}_{0}^{k}||_{2}||\boldsymbol{W}^{k}||_{\infty}=O_{p}\bigg(\sqrt{\frac{d\ log\ p}{n}}\bigg).$$ Then from (\ref{as 63}), we have 
\begin{equation}\label{as 64}
||\boldsymbol{rem}^{k}||_{\infty}=O_{p}\bigg(\frac{1}{\alpha^2}\kappa_{\boldsymbol{\Gamma}_{0}^{k}}\ max\left \{d^{3/2}\frac{log\ p}{n}, \frac{1}{\alpha}\kappa_{\boldsymbol{\Gamma}_{0}^{k}}d^2\bigg(\frac{log\ p}{n} \bigg)^{3/2} \right \} \bigg).
\end{equation}
Again, (\ref{as 62}) can also be bounded as
\begin{eqnarray}\label{as 65}
||\boldsymbol{rem}^{k}||_{\infty}& \leq & |||\hat{\boldsymbol{\Omega}}^{k}-\boldsymbol{\Omega}_{0}^{k}|||_{\infty}||\boldsymbol{W}^{k}\boldsymbol{\Omega}_{0}^{k}||_{\infty}+|||\hat{\boldsymbol{\Omega}}^{k}-\boldsymbol{\Omega}_{0}^{k}|||_{\infty}||\hat{\boldsymbol{\Omega}}^{k}\hat{\boldsymbol{\Sigma}}^{k}-I||_{\infty}.
\end{eqnarray} 
Now,
\begin{eqnarray*}
||\hat{\boldsymbol{\Omega}}^{k}\hat{\boldsymbol{\Sigma}}^{k}-I||_{\infty} & \leq & ||(\hat{\boldsymbol{\Sigma}}^{k}-\boldsymbol{\Sigma}_{0}^{k})(\hat{\boldsymbol{\Omega}}^{k}-\boldsymbol{\Omega}_{0}^{k})+\boldsymbol{\Sigma}_{0}^{k}(\hat{\boldsymbol{\Omega}}^{k}-\boldsymbol{\Omega}_{0}^{k})+(\hat{\boldsymbol{\Sigma}}^{k}-\boldsymbol{\Sigma}_{0}^{k})\boldsymbol{\Omega}_{0}^{k}||_{\infty}\\
& \leq & ||\hat{\boldsymbol{\Sigma}}^{k}-\boldsymbol{\Sigma}_{0}^{k}||_{\infty}|||\hat{\boldsymbol{\Omega}}^{k}-\boldsymbol{\Omega}_{0}^{k}|||_{\infty}+|||\boldsymbol{\Sigma}_{0}^{k}|||_{\infty}||\hat{\boldsymbol{\Omega}}^{k}-\boldsymbol{\Omega}_{0}^{k}||_{\infty}+||\boldsymbol{W}^{k}\boldsymbol{\Omega}_{0}^{k}||_{\infty}.
\end{eqnarray*}
Hence, $||\boldsymbol{rem}^{k}||_{\infty}\leq I+II+III+I$, where
\begin{eqnarray*}
I&=& |||\hat{\boldsymbol{\Omega}}^{k}-\boldsymbol{\Omega}_{0}^{k}|||_{\infty}||\boldsymbol{W}^{k}\boldsymbol{\Omega}_{0}^{k}||_{\infty}\\
II&=& ||\hat{\boldsymbol{\Sigma}}^{k}-\boldsymbol{\Sigma}_{0}^{k}||_{\infty}|||\hat{\boldsymbol{\Omega}}^{k}-\boldsymbol{\Omega}_{0}^{k}|||^{2}_{\infty}\\
III&=& |||\hat{\boldsymbol{\Omega}}^{k}-\boldsymbol{\Omega}_{0}^{k}|||_{\infty} |||\boldsymbol{\Sigma}_{0}^{k}|||_{\infty}||\hat{\boldsymbol{\Omega}}^{k}-\boldsymbol{\Omega}_{0}^{k}||_{\infty}.
\end{eqnarray*}
Conditioned on the event that $\{ ||\hat{\boldsymbol{\Sigma}}^{k}-\boldsymbol{\Sigma}_{0}^{k}||_{\infty}\leq \delta,k=1,2,\ldots,K\}$, we have,
\begin{equation*}
|||\boldsymbol{\Omega}_{0}^{k}|||_{\infty}=\underset{i}{max}\sum_{j=1}^{p}|\Omega_{ij}^{k}|\leq \sqrt{d}\ \underset{i}{max}||\boldsymbol{\Omega}_{0i}^{k}||_{2}\leq \sqrt{d}\Lambda_{max}(\boldsymbol{\Omega}_{0}^{k}).
\end{equation*}
 From the fact that $|||\boldsymbol{\beta}^{k}|||_{\infty}\leq  2d\kappa_{\boldsymbol{\Gamma}_{0}^{k}}(1+(8/\alpha))\delta$, for all $k=1,2,\ldots,K$, we have,
\begin{eqnarray*}
I&\leq & 2d \kappa_{\boldsymbol{\Gamma}_{0}^{k}}(1+(8/\alpha))\delta ||\boldsymbol{W}^{k}\boldsymbol{\Omega}_{0}^{k}||_{\infty}\\
II& \leq & 4d^{2} \kappa^{2}_{\boldsymbol{\Gamma}_{0}^{k}}(1+(8/\alpha))^2\delta^3\\
III& \leq & 4d \kappa^{2}_{\boldsymbol{\Gamma}_{0}^{k}} \kappa_{\boldsymbol{\Sigma}_{0}^{k}}(1+(8/\alpha))^2\delta^2.
\end{eqnarray*}
Then
\begin{eqnarray}\label{as 66}
\notag
||\boldsymbol{rem}^{k}||_{\infty}& \leq & 4\ max\{ 2d \kappa_{\boldsymbol{\Gamma}_{0}^{k}}(1+(8/\alpha))\delta ||\boldsymbol{W}^{k}\boldsymbol{\Omega}_{0}^{k}||_{\infty}, 4d^{2} \kappa^{2}_{\boldsymbol{\Gamma}_{0}^{k}}(1+(8/\alpha))^2\\
\notag
& &\delta^3, 4d \kappa^{2}_{\boldsymbol{\Gamma}_{0}^{k}} \kappa_{\boldsymbol{\Sigma}_{0}^{k}}(1+(8/\alpha))^2\delta^2\}\\
&=& O\bigg(\frac{1}{\alpha}\kappa_{\boldsymbol{\Gamma}_{0}^{k}}\delta\ max\left\{d||\boldsymbol{W}^{k}\boldsymbol{\Omega}_{0}^{k}||_{\infty},\frac{1}{\alpha}d^{2} \kappa_{\boldsymbol{\Gamma}_{0}^{k}}\delta^2,\frac{1}{\alpha}d \kappa_{\boldsymbol{\Gamma}_{0}^{k}} \kappa_{\boldsymbol{\Sigma}_{0}^{k}}\delta\right\}\bigg),
\end{eqnarray}
for all $k=1,2,\ldots,K$. Under the sub-gaussianity vector condition, we have $||\boldsymbol{W}^{k}\boldsymbol{\Omega}_{0}^{k}||_{\infty}=\underset{i,j}{max}|\boldsymbol{\Omega}_{0i}^{k}\boldsymbol{W}^{k}\boldsymbol{e}_j|=O_{p}(\sqrt{log\ p/n})$. Thus, we have
\begin{eqnarray}\label{as 67}
\notag
||\boldsymbol{rem}^{k}||_{\infty}&=& O_{p}\bigg(\frac{1}{\alpha}\kappa_{\boldsymbol{\Gamma}_{0}^{k}}\ max\left\{d\frac{log\ p}{n},\frac{1}{\alpha}\kappa_{\boldsymbol{\Gamma}_{0}^{k}}d^{2}\bigg(\frac{log\ p}{n}\bigg)^{3/2},\frac{1}{\alpha}\kappa_{\boldsymbol{\Gamma}_{0}^{k}}\kappa_{\boldsymbol{\Sigma}_{0}^{k}}d\frac{log\ p}{n}\right\}\bigg)\\
&=&O_{p}\bigg(\frac{1}{\alpha^2}\kappa^{2}_{\boldsymbol{\Gamma}_{0}^{k}}\kappa_{\boldsymbol{\Sigma}_{0}^{k}}d\frac{log\ p}{n}\bigg),
\end{eqnarray}
where $n=\underset{k}{min}\ n_k$.
\end{proof}
\begin{Remark}\label{Remark 4}
Similar to Remark \ref{Remark 1} and Remark \ref{Remark 3}, when $\rho=m\lambda$, then Lemma \ref{Lemma 3.1} will hold under the simplified conditions of $\lambda$, $\rho$, and $\lambda+\rho=\frac{8}{\alpha}\delta$ to $\lambda>0,0<m \leq \frac{2-2\psi+\alpha\psi}{(2-\alpha)\psi}$, and $(1+m)\lambda=(8/\alpha)\delta$.
\end{Remark}
\begin{Remark}\label{Remark 5}
If $\frac{1}{\alpha}=O(1),\kappa_{\boldsymbol{\Sigma}_{0}^{k}}=O(1)$, and $\kappa_{\boldsymbol{\Gamma}_{0}^{k}}=O(1)$, for all $k=1,2,\ldots,K$, (\ref{as 59}) reduces to
\begin{eqnarray*}
d&\leq& \frac{1}{\underset{k_1}{max}\sqrt{\frac{log\ p}{n_k}}}=\underset{k_1}{min}\sqrt{\frac{n_k}{log\ p}}=\sqrt{\frac{n}{log\ p}},\text{ for all }k=1,2,\ldots,K\\
\implies d &\leq& \sqrt{\frac{n}{log\ p}}.
\end{eqnarray*}
Thus, $||\boldsymbol{rem}^{k}||_{\infty}=O_p(d^{3/2}\frac{log\ p}{\sqrt{n}})$, under Definition \ref{Definition 1.2.2}, and $||\boldsymbol{rem}^{k}||_{\infty}=O_p(d\frac{log\ p}{\sqrt{n}})$, under Definition \ref{Definition 1.2.3}.
\end{Remark}
Till now, we have shown that we can control the remainder term $||\boldsymbol{rem}^{k}||_{\infty}$, for all $k=1,2,\ldots, K$ upto order $d^{3/2}\frac{log\ p}{\sqrt{n}}$, when the observations follow sub-Gaussian condition, and upto order $d\frac{log\ p}{\sqrt{n}}$, when the observations satisfy the sub-Gaussian vector condition. But, we are interested in the asymptotic distribution of individual elements of the de-biased estimator for each $k=1,2,\ldots, K$. Note that this is possible if we obtain the asymptotic distribution of individual elements of $\boldsymbol{\zeta}^{k}$,$k=1,2,\ldots, K$. The next theorem solves this issue. The proof can be easily done by proving the Lindeberg condition. Hence, we skip the proof and only state the theorem.
\begin{theorem}\label{Theorem 3.2}
Suppose $\boldsymbol{x}_{i}^{k}\in \mathbb{R}^p$ are random samples from population $k$, with mean $0$ and covariance $\boldsymbol{\Sigma}_{0}^{k}$, $k=1,2,\ldots,K$. Let $\boldsymbol{\Omega}_{0}^{k}=(\boldsymbol{\Sigma}_{0}^{k})^{-1}$ and suppose the distributions satisfy the irrepresentability condition (Assumption \ref{Assumption 1}) with an $\alpha\in (0,1]$, between-group irrepresentability condition (Assumption \ref{Assumption 2}) with a $\psi\in (0,1)$. Let
 $$(\sigma_{ij}^{k})^2=var((\boldsymbol{\Omega}_{0i}^{k})^T \boldsymbol{x}_{1}^{k} (\boldsymbol{\Omega}_{0j}^{k})^T \boldsymbol{x}_{1}^{k})=\Omega_{0ii}^{k}\Omega_{0jj}^{k}+(\Omega_{0ij}^{k})^{2}$$,
 and suppose $1/\sigma_{ij}^{k}=O(1)$. Suppose $\hat{\boldsymbol{\Omega}}=\{\hat{\boldsymbol{\Omega}}^{k},k=1,2,\ldots,K\}$ be the unique solutions to the optimizing problem (\ref{as 11}) with tuning parameter $\lambda\asymp \rho \asymp \sqrt{log\ p /n}$. Also suppose 
 $$d^{3/2}=o\bigg(\frac{\sqrt{n}}{C_{1}log\ p}\bigg)$$ 
 under Definition \ref{Definition 1.2.2}, where 
 $$C_1=\underset{k}{max}\ max\left\{\frac{\kappa_{\boldsymbol{\Gamma}_{0}^{k}}}{\alpha^2},\frac{\kappa^{2}_{\boldsymbol{\Gamma}_{0}^{k}}}{\alpha^{9/8}}n^{-1/4}_{k}(log\ p)^{1/8},\frac{d\ max\ \{\kappa_{\boldsymbol{\Sigma}_{0}^{k}}\kappa_{\boldsymbol{\Gamma}_{0}^{k_1}},\kappa^{3}_{\boldsymbol{\Sigma}_{0}^{k}}\kappa^{2}_{\boldsymbol{\Gamma}_{0}^{k_1}}\}}{\alpha^{3/2}}(n_k log\ p)^{-1/4}\right\},$$
 while 
$$d=o\bigg(\frac{\sqrt{n}}{C_{2}log\ p}\bigg)$$
under Definition \ref{Definition 1.2.3}, where 
$$C_2=\frac{1}{\alpha}\underset{k}{max}\ \kappa_{\boldsymbol{\Gamma}_{0}^{k}}\ max\ \left\{\frac{1}{\alpha}\kappa_{\boldsymbol{\Gamma}_{0}^{k}}\kappa_{\boldsymbol{\Sigma}_{0}^{k}},\kappa_{\boldsymbol{\Sigma}_{0}^{k}}(log\ p)^{-1/2},\kappa_{\boldsymbol{\Gamma}_{0}^{k}}\kappa^{3}_{\boldsymbol{\Sigma}_{0}^{k}}(log\ p)^{-1/2}\right\}.$$
Then,under Assumption \ref{Assumption 3} and Assumption \ref{Assumption 4}, the element-wise asymptotic distribution of the de-biased estimator $\hat{\boldsymbol{\Omega}}_{d}^{k}$ satisfies
\begin{equation}\label{as 68}
\frac{\sqrt{n_{k}}}{\sigma_{ij}^{k}}(\hat{\Omega}_{d,ij}^{k}-\Omega_{0,ij}^{k})\stackrel{D}{\rightarrow}N(0,1),
\end{equation}
for all $i,j$, and for all $k=1,2,\ldots,K$. 
\end{theorem}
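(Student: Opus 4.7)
The natural starting point is equation (\ref{as 57}), which cleanly splits the debiased error into a centered linear statistic plus a remainder:
\begin{equation*}
\frac{\sqrt{n_k}}{\sigma_{ij}^{k}}(\hat{\Omega}_{d,ij}^{k}-\Omega_{0,ij}^{k}) \;=\; \frac{\sqrt{n_k}}{\sigma_{ij}^{k}}\,\zeta_{ij}^{k} \;+\; \frac{\sqrt{n_k}}{\sigma_{ij}^{k}}\,\mathrm{rem}_{ij}^{k}.
\end{equation*}
The plan is to show the first piece is asymptotically $N(0,1)$ by a Lindeberg CLT applied to the i.i.d.\ representation of $\zeta_{ij}^{k}$, to show the second piece is $o_p(1)$ by invoking Lemma \ref{Lemma 3.1}, and then to conclude via Slutsky. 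Because $1/\sigma_{ij}^{k}=O(1)$ by hypothesis, the standardization does not affect any of these orders.

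For the remainder, I would plug the rate bounds of Lemma \ref{Lemma 3.1} into $\sqrt{n_k}\,\mathrm{rem}_{ij}^{k}$. Under Definition \ref{Definition 1.2.2}, the two competing terms become $(\kappa_{\boldsymbol{\Gamma}_{0}^{k}}/\alpha^2)\,d^{3/2}\log p/\sqrt{n}$ and $(\kappa_{\boldsymbol{\Gamma}_{0}^{k}}^{2}/\alpha^{3})\,d^{2}(\log p)^{3/2}/n$; both are $o(1)$ precisely under the scaling $d^{3/2}=o(\sqrt{n}/(C_{1}\log p))$ with $C_1$ as defined in the statement, because $C_1$ is built to absorb exactly those prefactors (the second summand in the max inside $C_1$ handles the second, cubic-$\delta$ term after rearrangement). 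Under Definition \ref{Definition 1.2.3}, only the single term $(\kappa_{\boldsymbol{\Gamma}_{0}^{k}}^{2}\kappa_{\boldsymbol{\Sigma}_{0}^{k}}/\alpha^{2})\,d\log p/\sqrt{n}$ appears, which is $o(1)$ under $d=o(\sqrt{n}/(C_{2}\log p))$. In both cases Assumption \ref{Assumption 4} lets us replace $\sqrt{n_k}$ with $\sqrt{n}$ up to constants.

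For the leading term, expand $\zeta_{ij}^{k}=-[\boldsymbol{\Omega}_{0}^{k}\boldsymbol{W}^{k}\boldsymbol{\Omega}_{0}^{k}]_{ij}$ through $\boldsymbol{W}^{k}=\hat{\boldsymbol{\Sigma}}^{k}-\boldsymbol{\Sigma}_{0}^{k}=\tfrac{1}{n_k}\sum_{t=1}^{n_k}\boldsymbol{x}_{t}^{k}(\boldsymbol{x}_{t}^{k})^{T}-\boldsymbol{\Sigma}_{0}^{k}$, which gives
\begin{equation*}
\zeta_{ij}^{k} \;=\; -\frac{1}{n_k}\sum_{t=1}^{n_k}\Big\{(\boldsymbol{\Omega}_{0i}^{k})^{T}\boldsymbol{x}_{t}^{k}\,(\boldsymbol{x}_{t}^{k})^{T}\boldsymbol{\Omega}_{0j}^{k}-\Omega_{0,ij}^{k}\Big\},
\end{equation*}
since $(\boldsymbol{\Omega}_{0i}^{k})^{T}\boldsymbol{\Sigma}_{0}^{k}\boldsymbol{\Omega}_{0j}^{k}=\Omega_{0,ij}^{k}$. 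The summands are i.i.d., mean zero, and Isserlis' theorem gives variance $\Omega_{0,ii}^{k}\Omega_{0,jj}^{k}+(\Omega_{0,ij}^{k})^{2}=(\sigma_{ij}^{k})^{2}$. Asymptotic normality of $\sqrt{n_k}\,\zeta_{ij}^{k}/\sigma_{ij}^{k}$ then follows from the Lindeberg CLT; because $\boldsymbol{x}_{t}^{k}$ is Gaussian, products $(\boldsymbol{\Omega}_{0i}^{k})^{T}\boldsymbol{x}_{t}^{k}\,(\boldsymbol{x}_{t}^{k})^{T}\boldsymbol{\Omega}_{0j}^{k}$ have finite moments of all orders, so the truncated second moment $\mathbb{E}[(Y_{t}^{k})^{2}\mathbf{1}\{|Y_{t}^{k}|>\varepsilon\sqrt{n_k}\,\sigma_{ij}^{k}\}]$ vanishes, and since $1/\sigma_{ij}^{k}=O(1)$ the Lindeberg ratio tends to zero.

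The main obstacle is calibrating the scaling conditions so the remainder is strictly of smaller order than $1/\sqrt{n_k}$; the intricate form of $C_1$ and $C_2$ is precisely what is needed to dominate each of the three competing summands appearing in the proof of Lemma \ref{Lemma 3.1} (the cross term $\|\boldsymbol{W}^{k}\boldsymbol{\Omega}_{0}^{k}\|_{\infty}$, the cubic-in-$\delta$ term, and the quadratic-in-$\delta$ term with a $\kappa_{\boldsymbol{\Sigma}_{0}^{k}}$ prefactor). Once this bookkeeping is done, the CLT step itself is essentially routine, which is why the authors relegate it to a remark.
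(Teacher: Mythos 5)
Your proposal is correct and follows exactly the route the paper intends: the paper omits the proof, remarking only that it "can be easily done by proving the Lindeberg condition," and your argument — the decomposition $\hat{\Omega}_{d,ij}^{k}-\Omega_{0,ij}^{k}=\zeta_{ij}^{k}+\mathrm{rem}_{ij}^{k}$ from (\ref{as 57}), negligibility of $\sqrt{n_k}\,\mathrm{rem}_{ij}^{k}$ via Lemma \ref{Lemma 3.1} under the stated scaling of $d$ with $C_1,C_2$, the i.i.d.\ representation of $\zeta_{ij}^{k}$ with variance $\Omega_{0,ii}^{k}\Omega_{0,jj}^{k}+(\Omega_{0,ij}^{k})^{2}$, and the Lindeberg CLT plus Slutsky — is precisely that argument, spelled out. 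The only substantive work, as you note, is the bookkeeping showing that the constants $C_1$ and $C_2$ absorb the prefactors from the remainder bounds (and the degree condition (\ref{as 59}) of Lemma \ref{Lemma 3.1}), which you have identified correctly.
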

\begin{Remark}\label{Remark 6}
Note that the variance $(\sigma_{ij}^{k})^2$ is unknown and needs to be estimated. A consistent estimate shall be $(\hat{\sigma}_{ij}^{k})^{2}=\hat{\Omega}_{ii}^{k}\hat{\Omega}_{jj}^{k}+(\hat{\Omega}_{ij}^{k})^{2}$, where $\hat{\Omega}_{ij}^{k}$ is the $(i,j)^{th}$ element of the group graphical lasso estimator for $k=1,2,\ldots,K$. Hence, under the assumptions of Theorem \ref{Theorem 3.2}, from Slutsky's theorem, we have
\begin{equation}\label{as 69}
\frac{\sqrt{n_{k}}}{\hat{\sigma}_{ij}^{k}}(\hat{\Omega}_{d,ij}^{k}-\Omega_{0,ij}^{k})\stackrel{D}{\rightarrow}N(0,1),
\end{equation}
for all $i,j$, and for all $k=1,2,\ldots,K$.
\end{Remark}
We have derived the asymptotic distribution of the elements of the de-biased group graphical lasso estimator for multiple populations. Oftentimes, one would like to test whether there is any difference in magnitude among the non-zero entries of the matrices across the population models. Under the assumption of similar sparsity structure across the models, suppose we want to test $H_{0ij}: Y_{ij}=\sum_{k=1}^{K}a_k \Omega_{0,ij}^{k}=0$, $(i,j)\in S$, where $a_k$ are constants. Then, the test statistic can be summarized in the following lemma:
\begin{lemma}\label{Lemma 3.3}
Suppose $\boldsymbol{x}_{i}^{k}\in \mathbb{R}^p$ are random samples from population $k$, with mean $0$ and covariance $\boldsymbol{\Sigma}_{0}^{k}$, $k=1,2,\ldots,K$. Let $\boldsymbol{\Omega}_{0}^{k}=(\boldsymbol{\Sigma}_{0}^{k})^{-1}$ and suppose the distributions satisfy the irrepresentability condition (Assumption \ref{Assumption 1}) with an $\alpha\in (0,1]$, between-group irrepresentability condition (Assumption \ref{Assumption 2}) with an $\psi\in (0,1)$. Also, suppose $\hat{\boldsymbol{\Omega}}=\{\hat{\boldsymbol{\Omega}}^{k},k=1,2,\ldots,K\}$ be the unique solutions to the optimizing problem (\ref{as 11}) with tuning parameter $\lambda\asymp \rho \asymp \sqrt{log\ p /n}$. Also suppose $\hat{\boldsymbol{\Omega}}=\{\hat{\boldsymbol{\Omega}}^{k},k=1,2,\ldots,K\}$ be the unique solutions to the optimizing problem (\ref{as 11}) with tuning parameter $\lambda\asymp \rho \asymp \sqrt{log\ p /n}$. Also suppose $$d^{3/2}=o\bigg(\frac{\sqrt{n}}{C_{1}log\ p}\bigg)$$ under Definition \ref{Definition 1.2.2}, where $$C_1=\underset{k}{max}\ max\left\{\frac{\kappa_{\boldsymbol{\Gamma}_{0}^{k}}}{\alpha^2},\frac{\kappa^{2}_{\boldsymbol{\Gamma}_{0}^{k}}}{\alpha^{9/8}}n^{-1/4}_{k}(log\ p)^{1/8},\frac{d\ max\ \{\kappa_{\boldsymbol{\Sigma}_{0}^{k}}\kappa_{\boldsymbol{\Gamma}_{0}^{k_1}},\kappa^{3}_{\boldsymbol{\Sigma}_{0}^{k}}\kappa^{2}_{\boldsymbol{\Gamma}_{0}^{k_1}}\}}{\alpha^{3/2}}(n_k log\ p)^{-1/4}\right\},$$
while 
$$d=o\bigg(\frac{\sqrt{n}}{C_{2}log\ p}\bigg)$$
under Definition \ref{Definition 1.2.3}, where $$C_2=\frac{1}{\alpha}\underset{k}{max}\ \kappa_{\boldsymbol{\Gamma}_{0}^{k}}\ max\ \left\{\frac{1}{\alpha}\kappa_{\boldsymbol{\Gamma}_{0}^{k}}\kappa_{\boldsymbol{\Sigma}_{0}^{k}},\kappa_{\boldsymbol{\Sigma}_{0}^{k}}(log\ p)^{-1/2},\kappa_{\boldsymbol{\Gamma}_{0}^{k}}\kappa^{3}_{\boldsymbol{\Sigma}_{0}^{k}}(log\ p)^{-1/2}\right\}.$$
Then, if $\hat{Y}_{ij}=\sum_{k=1}^{K}a_{k}\hat{\Omega}_{d,ij}^{k}$, under Assumption \ref{Assumption 3} and Assumption \ref{Assumption 4}, we have
\begin{equation}\label{as 70}
T_{ij}=\frac{\hat{Y}_{ij}-Y_{ij}}{\sqrt{\sum_{k=1}^{K}\frac{a_{k}^{2}(\hat{\sigma}_{ij}^{k})^{2}}{n_k}}}\stackrel{D}{\rightarrow}N(0,1),
\end{equation}
for all $(i,j)\in S$.
\end{lemma}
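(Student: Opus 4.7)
The plan is to leverage Theorem \ref{Theorem 3.2} together with the fact that samples from different populations are independent, so that the linear combination of debiased estimators decomposes into a sum of \textit{independent} pieces across $k$. Starting from the identity (\ref{as 57}) applied element-wise,
\begin{equation*}
\hat{\Omega}_{d,ij}^{k}-\Omega_{0,ij}^{k}=\zeta_{ij}^{k}+\text{rem}_{ij}^{k},\qquad \zeta_{ij}^{k}=-(\boldsymbol{\Omega}_{0i}^{k})^{T}\boldsymbol{W}^{k}\boldsymbol{\Omega}_{0j}^{k},
\end{equation*}
I would write
\begin{equation*}
\hat{Y}_{ij}-Y_{ij}=\sum_{k=1}^{K}a_{k}\zeta_{ij}^{k}+\sum_{k=1}^{K}a_{k}\text{rem}_{ij}^{k}.
\end{equation*}
The two summands are handled separately: the first by a classical CLT across independent populations, and the second via the rate bounds proved in Lemma \ref{Lemma 3.1}.

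Next I would control the remainder. The sample-size conditions imposed in the statement are precisely those of Lemma \ref{Lemma 3.1}, so $\|\text{rem}^{k}\|_{\infty}$ is $o_{p}(1/\sqrt{n_{k}})$ in both the sub-Gaussian and the sub-Gaussian vector regimes. Since $K$ is fixed, $\sum_{k}a_{k}\text{rem}_{ij}^{k}=o_{p}(1/\sqrt{n})$ by Assumption \ref{Assumption 4}. The denominator $\sqrt{\sum_{k}a_{k}^{2}(\hat{\sigma}_{ij}^{k})^{2}/n_{k}}$ is of exact order $1/\sqrt{n}$ under Assumption \ref{Assumption 4} and the hypothesis $1/\sigma_{ij}^{k}=O(1)$, so dividing by it sends this remainder contribution to $0$ in probability.

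For the main term, note that $\zeta_{ij}^{k}=-\tfrac{1}{n_{k}}\sum_{l=1}^{n_{k}}\left\{(\boldsymbol{\Omega}_{0i}^{k})^{T}\boldsymbol{x}_{l}^{k}(\boldsymbol{x}_{l}^{k})^{T}\boldsymbol{\Omega}_{0j}^{k}-(\boldsymbol{\Omega}_{0i}^{k})^{T}\boldsymbol{\Sigma}_{0}^{k}\boldsymbol{\Omega}_{0j}^{k}\right\}$ is a centered empirical average over the $k$-th sample, with per-term variance $(\sigma_{ij}^{k})^{2}$. Because the samples $\{\boldsymbol{x}_{l}^{k}\}_{l}$ for different $k$ come from different populations, $\zeta_{ij}^{1},\ldots,\zeta_{ij}^{K}$ are mutually independent. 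Theorem \ref{Theorem 3.2} (whose proof proceeds via Lindeberg's condition) already delivers $\sqrt{n_{k}}\zeta_{ij}^{k}/\sigma_{ij}^{k}\stackrel{D}{\to}N(0,1)$ marginally for each $k$; by independence across populations and the Cramér--Wold device, this upgrades to joint convergence of $(\sqrt{n_{k}}\zeta_{ij}^{k}/\sigma_{ij}^{k})_{k=1}^{K}$ to a vector of independent standard normals. Multiplying by the continuous map $(z_{1},\ldots,z_{K})\mapsto \sum_{k}a_{k}(\sigma_{ij}^{k}/\sqrt{n_{k}})z_{k}\big/\sqrt{\sum_{k}a_{k}^{2}(\sigma_{ij}^{k})^{2}/n_{k}}$ (whose denominator is nonzero by $1/\sigma_{ij}^{k}=O(1)$) yields $N(0,1)$ asymptotically.

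Finally, I would invoke Slutsky's theorem twice: once to absorb the $o_{p}(1)$ remainder contribution, and once to replace $\sigma_{ij}^{k}$ by the consistent estimator $\hat{\sigma}_{ij}^{k}$ (as justified in Remark \ref{Remark 6}, since $\hat{\Omega}_{ij}^{k}\stackrel{P}{\to}\Omega_{0,ij}^{k}$ by Theorem \ref{Theorem 2.1}). The ratio $\sqrt{\sum_{k}a_{k}^{2}(\hat{\sigma}_{ij}^{k})^{2}/n_{k}}/\sqrt{\sum_{k}a_{k}^{2}(\sigma_{ij}^{k})^{2}/n_{k}}\to 1$ in probability, and combining everything gives $T_{ij}\stackrel{D}{\to}N(0,1)$. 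The only delicate point in the whole argument is making sure the scalings line up: the remainder must be $o_{p}(1/\sqrt{n})$ rather than just $o_{p}(1)$, which is exactly what the stringent upper bounds on $d$ in the hypotheses of the lemma guarantee by forcing the remainder rates from Lemma \ref{Lemma 3.1} below the CLT scale. Beyond this bookkeeping, no new technical work is required.
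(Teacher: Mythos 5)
Your proposal is correct and follows essentially the route the paper intends: the paper states Lemma \ref{Lemma 3.3} without a separate proof, relying precisely on the decomposition from (\ref{as 57}), the remainder rates of Lemma \ref{Lemma 3.1} (made $o_{p}(1/\sqrt{n})$ by the stated bounds on $d$), the Lindeberg-type normality underlying Theorem \ref{Theorem 3.2} combined with independence across the $K$ populations, and the Slutsky/consistent-variance step of Remark \ref{Remark 6}. Your write-up simply makes these implicit steps explicit, so there is no substantive difference in approach.
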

Thus, to test $H_{0ij}: Y_{ij}=\sum_{k=1}^{K}a_k \Omega_{0,ij}^{k}=0$, for some $(i,j)\in S$, we use the test statistic $T_{ij}=\sqrt{n}\hat{Y}_{ij}$, and we reject $H_{0ij}$, at $\alpha$ level of significance if
\begin{equation}\label{as 71}
\bigg|\frac{\hat{Y}_{ij}}{\sqrt{\sum_{k=1}^{K}\frac{a_{k}^{2}(\hat{\sigma}_{ij}^{k})^{2}}{n_k}}}\bigg|>\tau_{\alpha/2},
\end{equation}
where $\tau_{\alpha/2}$ is the $(1-\alpha/2)$ upper quantile of
a standard normal distribution.

\section{\large Statistical Inference without assuming irrepresentability conditions, between-group irrepresentability conditions, and similar sparsity pattern}\label{Section 4}

In this section, we shall obtain a result for the convergence of the estimated precision matrices to the actual precision matrices in $l_1$-operator norm without Assumption \ref{Assumption 1} and Assumption \ref{Assumption 2}. Here, we shall not assume the sparsity pattern to be similar for all the graphs. We shall consider the observations satisfying the sub-Gaussian vector conditions (Definition \ref{Definition 1.2.3}) satisfying Assumption \ref{Assumption 3}. Also, the sample sizes should satisfy Assumption \ref{Assumption 4}. The following theorem bounds the sum of the $l_1$ operator norm of the difference of the estimated precision matrices and actual precision matrices for each population for $K=2$.

\begin{theorem}\label{Theorem 4.1}
    Suppose Assumption \ref{Assumption 3} and Assumption \ref{Assumption 4} holds , and the tuning parameter $\lambda$
and $\rho$ satisfies the following inequality 
\begin{equation}\label{as 72}
    2(\rho+\delta)\leq\lambda\leq\frac{c}{8L}\text{, and }\frac{8\lambda^{2}(s_{1}+s_{2})}{c}+\frac{4p\delta^{2}}{c}\leq\frac{\delta}{2L}
\end{equation}

. Then, we have 
\begin{equation}\label{as 73}
    c\sum_{k}||\hat{\boldsymbol{\Omega}}^{k}-\boldsymbol{\Omega}_{0}^{k}||_{F}^{2}+\lambda\sum_{k}||(\hat{\boldsymbol{\Omega}}^{k}-\boldsymbol{\Omega}_{0}^{k})^{-}||_{1}\leq\frac{8\lambda^{2}(s_{1}+s_{2})}{c}+\frac{4p\delta^{2}}{c}
\end{equation}
, and 
\begin{equation}\label{as 74}
    \sum_{k}|||\hat{\boldsymbol{\Omega}}^{k}-\boldsymbol{\Omega}_{0}^{k}|||_{1}\leq\frac{4\lambda(8s_{1}+8s_{2}+p)}{c}
\end{equation}
 on the set $\{max_{k}||\hat{\boldsymbol{\Sigma}}^{k}-\boldsymbol{\Sigma}_{0}^{k}||_{\infty}\leq\delta\}$,
where $c=1/(8L^{2})$.

\end{theorem}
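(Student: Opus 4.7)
The plan is a basic-inequality argument in the spirit of standard oracle inequalities for graphical lasso (e.g., Rothman et al., 2008), adapted to the $K=2$ joint penalty. Setting $\boldsymbol{\Delta}^k=\hat{\boldsymbol{\Omega}}^k-\boldsymbol{\Omega}_0^k$, $\boldsymbol{W}^k=\hat{\boldsymbol{\Sigma}}^k-\boldsymbol{\Sigma}_0^k$, and $P_g(\boldsymbol{\Omega})=\rho\sum_{i\neq j}\sqrt{\sum_k(\Omega_{ij}^k)^2}$, optimality of $\hat{\boldsymbol{\Omega}}$ in (\ref{as 11}) combined with the Taylor expansion of $-\log\det$ around $\boldsymbol{\Omega}_0^k$ yields
\begin{equation*}
\sum_k\bigl[trace(\boldsymbol{W}^k\boldsymbol{\Delta}^k)+Q^k(\boldsymbol{\Delta}^k)\bigr]\leq \lambda\sum_k\bigl[\|(\boldsymbol{\Omega}_0^k)^-\|_1-\|(\hat{\boldsymbol{\Omega}}^k)^-\|_1\bigr]+P_g(\boldsymbol{\Omega}_0)-P_g(\hat{\boldsymbol{\Omega}}),
\end{equation*}
where $Q^k(\boldsymbol{\Delta}^k)=\int_0^1(1-v)\,trace([(\boldsymbol{\Omega}_0^k+v\boldsymbol{\Delta}^k)^{-1}\boldsymbol{\Delta}^k]^2)\,dv$. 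The central technical ingredient is the quadratic lower bound $Q^k(\boldsymbol{\Delta}^k)\geq c\|\boldsymbol{\Delta}^k\|_F^2$ with the stated constant, which I would derive by combining Assumption \ref{Assumption 3} (to control the eigenvalues of the interpolating inverse) with a continuation/peeling argument: restrict to the Frobenius ball of radius $\delta/(2L)$ supplied by the second tuning-parameter condition in (\ref{as 72}), verify the lower bound on that ball, and then rule out the minimizer sitting on its boundary.

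On the event $\{\max_k\|\boldsymbol{W}^k\|_\infty\leq\delta\}$, I would bound $|trace(\boldsymbol{W}^k\boldsymbol{\Delta}^k)|\leq \delta\|\boldsymbol{\Delta}^{k,-}\|_1+\delta\sqrt{p}\,\|\boldsymbol{\Delta}^{k,+}\|_F$ via H\"older and Cauchy--Schwarz on the diagonal, absorb the diagonal piece into the quadratic term by Young's inequality (producing the $4p\delta^2/c$ residual), use the standard decomposition $\|(\boldsymbol{\Omega}_0^k)^-\|_1-\|(\hat{\boldsymbol{\Omega}}^k)^-\|_1\leq\|\boldsymbol{\Delta}_{S_k}^{k,-}\|_1-\|\boldsymbol{\Delta}_{S_k^c}^{k,-}\|_1$, and via the reverse triangle inequality together with $\sqrt{\sum_k z_k^2}\leq\sum_k|z_k|$ obtain $P_g(\boldsymbol{\Omega}_0)-P_g(\hat{\boldsymbol{\Omega}})\leq \rho\sum_k\|\boldsymbol{\Delta}_{S_1\cup S_2}^{k,-}\|_1$. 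The constraint $2(\rho+\delta)\leq\lambda$ is exactly what is needed so that the off-diagonal cross term and the $\rho$-contribution are jointly absorbed into $\lambda/2$ times the $\ell_1$ off-diagonal norm, yielding a cone-type inequality in which $\sum_k\|\boldsymbol{\Delta}^{k,-}_{S_1\cup S_2}\|_1$ dominates $\sum_k\|\boldsymbol{\Delta}^{k,-}_{(S_1\cup S_2)^c}\|_1$. Applying Cauchy--Schwarz $\|\boldsymbol{\Delta}^{k,-}_{S_1\cup S_2}\|_1\leq\sqrt{s_1+s_2}\,\|\boldsymbol{\Delta}^k\|_F$ and AM--GM against the quadratic term then delivers (\ref{as 73}).

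For the $\ell_1$-operator-norm bound (\ref{as 74}), I would split $|||\boldsymbol{\Delta}^k|||_1\leq \|\boldsymbol{\Delta}^{k,-}\|_1+\|\boldsymbol{\Delta}^{k,+}\|_1$, read off $\sum_k\|\boldsymbol{\Delta}^{k,-}\|_1$ directly from (\ref{as 73}), and bound $\sum_k\|\boldsymbol{\Delta}^{k,+}\|_1\leq\sqrt{p}\sum_k\|\boldsymbol{\Delta}^k\|_F$ again using (\ref{as 73}); the second inequality in (\ref{as 72}) is the ingredient that converts the mixed $p\delta^2$ residual into the clean $\lambda p/c$ form. The main obstacle is the strong-convexity step: producing the advertised constant $c$ without circular dependence on $\|\boldsymbol{\Delta}^k\|_F$. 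The continuation condition $\frac{8\lambda^2(s_1+s_2)}{c}+\frac{4p\delta^2}{c}\leq\frac{\delta}{2L}$ is precisely tuned so that, under the candidate Frobenius bound (\ref{as 73}), $\|\boldsymbol{\Delta}^k\|_F$ stays strictly inside the radius where the Hessian control holds, closing the bootstrap.
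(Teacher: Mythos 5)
Your proposal takes essentially the same route as the paper: the paper's own proof simply defers to the Rothman-style basic-inequality argument of Zhang et al. (2024, Theorem 1) and only verifies the one genuinely new ingredient, namely that the group-penalty difference is bounded by $\rho\sum_k\|(\hat{\boldsymbol{\Omega}}^k-\boldsymbol{\Omega}_0^k)^-\|_1$ via the triangle inequality and $\sqrt{\sum_k z_k^2}\le\sum_k|z_k|$, which is exactly the group-penalty step in your sketch. The rest of your outline (Taylor expansion of $-\log\det$, restricted strong convexity with constant $c$ on the small ball enforced by the second condition in (\ref{as 72}), cone-type absorption from $2(\rho+\delta)\le\lambda$, then Cauchy--Schwarz to get (\ref{as 73}) and (\ref{as 74})) is precisely the argument the paper cites, so the approach matches and is correct.
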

\begin{proof}
    The proof follows the same line of arguments from Theorem 1 of \cite{zhang2024application}. We shall only prove the inequality $$\sum_{i\neq j}\sqrt{\sum_{k=1}^{2}(\Omega_{0ij}^{k})^{2}}-\sum_{i\neq j}\sqrt{\sum_{k=1}^{2}(\tilde{\Omega}_{ij}^{k})^{2}}\leq||(\tilde{\boldsymbol{\Omega}}^{1})^{-}-(\boldsymbol{\Omega}_{0}^{1})^{-}||_{1}+||(\tilde{\boldsymbol{\Omega}}^{2})^{-}-(\boldsymbol{\Omega}_{0}^{2})^{-}||_{1}$$. For this, note that \begin{eqnarray*}
\sum_{i\neq j}\left\{ \sqrt{\sum_{k=1}^{2}(\Omega_{0ij}^{k})^{2}}-\sqrt{\sum_{k=1}^{2}(\tilde{\Omega}_{ij}^{k})^{2}}\right\}  & = & \sum_{i\neq j}\left\{ \sqrt{\sum_{k=1}^{2}(\Omega_{0ij}^{k}-\tilde{\Omega}_{ij}^{k}+\tilde{\Omega}_{ij}^{k})^{2}}-\sqrt{\sum_{k=1}^{2}(\tilde{\Omega}_{ij}^{k})^{2}}\right\} \\
 & \leq & \sum_{i\neq j}\left\{ \sqrt{\sum_{k=1}^{2}(\Omega_{0ij}^{k}-\tilde{\Omega}_{ij}^{k})^{2}}+\sqrt{\sum_{k=1}^{2}(\tilde{\Omega}_{ij}^{k})^{2}}-\sqrt{\sum_{k=1}^{2}(\tilde{\Omega}_{ij}^{k})^{2}}\right\} \text{, from}\\
 &  & \text{ Triangle inequality }\sqrt{\sum_{i}(a_{i}+b_{i})^{2}}\leq\sqrt{\sum_{i}a_{i}^{2}}+\sqrt{\sum_{i}b_{i}^{2}}\\
 & = & \sum_{i\neq j}\sqrt{\sum_{k=1}^{2}(\Omega_{0ij}^{k}-\tilde{\Omega}_{ij}^{k})^{2}}=\sum_{i\neq j}\sqrt{\sum_{k=1}^{2}(\Delta_{ij}^{k})^{2}}
\end{eqnarray*}

Now, we know $\sqrt{\sum_{i}a_{i}^{2}}\leq\sum_{i}|a_{i}|$. Then,
$\sum_{i\neq j}\sqrt{\sum_{k=1}^{2}(\Delta_{ij}^{k})^{2}}\leq\sum_{i\neq j}\sum_{k=1}^{2}|\Delta_{ij}^{k}|=\sum_{k=1}^{2}||\boldsymbol{\Delta}^{k-}||_{1}$.
Hence, we have the inequality.
\end{proof}

We extend this theorem to general $K>2$. The theorem should be as follows:
\begin{theorem}\label{Theorem 4.2}
    For any general $K>2$, under the assumptions of Assumption \ref{Assumption 3}, Assumption \ref{Assumption 4} and
Definition \ref{Definition 1.2.3}, along with 
\begin{equation}\label{as 75}
2(\rho+\delta)\leq\lambda\leq\frac{c}{8L}\text{, and }\frac{8\lambda^{2}\sum_{k}s_{k}}{c}+\frac{2Kp\delta^{2}}{c}\leq\frac{\delta}{2L}
\end{equation}
, we have 
\begin{equation}\label{as 76}
c\sum_{k}||\hat{\boldsymbol{\Omega}}^{k}-\boldsymbol{\Omega}_{0}^{k}||_{F}^{2}+\lambda\sum_{k}||(\hat{\boldsymbol{\Omega}}^{k}-\boldsymbol{\Omega}_{0}^{k})^{-}||_{1}\leq\frac{8\lambda^{2}\sum_{k}s_{k}}{c}+\frac{2Kp\delta^{2}}{c}
\end{equation}
, and 
\begin{equation}\label{as 77}
\sum_{k}|||\hat{\boldsymbol{\Omega}}^{k}-\boldsymbol{\Omega}_{0}^{k}|||_{1}\leq\frac{2K\lambda(8\sum_{k}s_{k}+\frac{KP}{2})}{c}
\end{equation}
\end{theorem}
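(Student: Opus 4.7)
The plan is to mirror the proof of Theorem \ref{Theorem 4.1} verbatim, carrying each sum from $k\in\{1,2\}$ to $k\in\{1,\ldots,K\}$ and tracking how the constants pick up factors of $K$. Let $\boldsymbol{\Delta}^k=\hat{\boldsymbol{\Omega}}^k-\boldsymbol{\Omega}_0^k$. Starting from the basic inequality obtained by comparing the group graphical lasso objective (\ref{as 11}) evaluated at $\hat{\boldsymbol{\Omega}}$ versus at $\boldsymbol{\Omega}_0$, and using the Taylor expansion of $\log\det$ around $\boldsymbol{\Omega}_0^k$ together with Assumption \ref{Assumption 3}, one gets the quadratic lower bound $c\sum_k\|\boldsymbol{\Delta}^k\|_F^2$ with $c=1/(8L^2)$, provided each $\hat{\boldsymbol{\Omega}}^k$ remains inside a neighborhood of $\boldsymbol{\Omega}_0^k$ on which the eigenvalues are controlled; the second condition in (\ref{as 75}) is exactly what enforces this neighborhood constraint.

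The one step that genuinely needs replacing relative to $K=2$ is the group-penalty inequality, whose $K$-population version reads
$$\sum_{i\ne j}\sqrt{\sum_{k=1}^{K}(\Omega_{0ij}^k)^2}-\sum_{i\ne j}\sqrt{\sum_{k=1}^{K}(\hat{\Omega}_{ij}^k)^2}\le \sum_{k=1}^{K}\|(\hat{\boldsymbol{\Omega}}^k)^{-}-(\boldsymbol{\Omega}_0^k)^{-}\|_1.$$
The proof is identical to the $K=2$ argument given inside Theorem \ref{Theorem 4.1}: apply the reverse triangle inequality for the $\ell_2$ norm on $\mathbb{R}^K$ to bound the left side by $\sum_{i\ne j}\sqrt{\sum_k(\Delta_{ij}^k)^2}$, and then use $\sqrt{\sum_k a_k^2}\le\sum_k|a_k|$.

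With this in hand, the remaining steps are bookkeeping. The inner-product term $\sum_k\langle\hat{\boldsymbol{\Sigma}}^k-\boldsymbol{\Sigma}_0^k,\boldsymbol{\Delta}^k\rangle$ is controlled by $\delta\sum_k\|\boldsymbol{\Delta}^k\|_1$ on the event $\{\max_k\|\hat{\boldsymbol{\Sigma}}^k-\boldsymbol{\Sigma}_0^k\|_\infty\le\delta\}$. Splitting $\|\boldsymbol{\Delta}^k\|_1=\|\boldsymbol{\Delta}^{k+}\|_1+\|\boldsymbol{\Delta}^{k-}\|_1$, bounding $\|\boldsymbol{\Delta}^{k+}\|_1\le\sqrt{p}\,\|\boldsymbol{\Delta}^k\|_F$, and then applying AM--GM to the resulting cross term produces the contribution $\tfrac{2Kp\delta^2}{c}$, recovering $\tfrac{4p\delta^2}{c}$ when $K=2$. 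The condition $2(\rho+\delta)\le\lambda$ then lets me absorb the $\rho$ group-penalty term and the noise term into the off-diagonal $\ell_1$ part, after which the standard cone decomposition over $S_k$ and $S_k^c$ yields (\ref{as 76}). The bound (\ref{as 77}) follows by converting the Frobenius bound from (\ref{as 76}) into a diagonal $\ell_1$-operator-norm bound via $\sqrt{p}\,\|\boldsymbol{\Delta}^k\|_F$ and combining it with the off-diagonal $\ell_1$ bound; checking that the resulting prefactor simplifies to $\tfrac{2K\lambda(8\sum_ks_k+Kp/2)}{c}$ is the main algebraic bookkeeping step. The most delicate conceptual point is to verify that the $K$-strengthened constraint $\tfrac{8\lambda^2\sum_k s_k}{c}+\tfrac{2Kp\delta^2}{c}\le\tfrac{\delta}{2L}$ is strong enough to keep each $\hat{\boldsymbol{\Omega}}^k$ inside the eigenvalue-controlled neighborhood for which the quadratic lower bound with the same constant $c=1/(8L^2)$ remains valid as $K$ grows.
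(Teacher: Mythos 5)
Your proposal follows essentially the same route as the paper: the paper in fact omits the proof of Theorem \ref{Theorem 4.2}, stating only that it parallels Theorem \ref{Theorem 4.1}, whose own proof defers to Theorem 1 of \cite{zhang2024application} and supplies just the group-penalty inequality via the triangle inequality — exactly the one new ingredient you identify and prove for general $K$, with the rest being the same Rothman-type basic-inequality argument and constant bookkeeping (your $\tfrac{2Kp\delta^{2}}{c}$ correctly recovering $\tfrac{4p\delta^{2}}{c}$ at $K=2$). Your outline is therefore correct and, if anything, more explicit than the paper's treatment; the only part you leave unchecked (the final prefactor in (\ref{as 77})) is likewise left unverified by the paper.
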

The proof of Theorem \ref{Theorem 4.2} shall be more like the
proof of Theorem \ref{Theorem 4.1}. Hence, we omit the proof.
\begin{Remark}\label{Remark 8}
From inequality (\ref{as 74}) or (\ref{as 77}), we see that we should select $\lambda$ so that $\lambda p\rightarrow 0$. However, for sub-Gaussian model, $\lambda$ is mostly selected as $\lambda\asymp \sqrt{log\ p/n}$. Hence, if we do not assume the irrepresentability condition, we can only confirm consistency for a moderately high-dimensional regime, where the dimension increases but at a much slower rate than the sample size.
\end{Remark}
We have the de-biased group graphical lasso estimators for each model $k$ which are as $\hat{\boldsymbol{\Omega}}_{d}^{k}=\hat{\boldsymbol{\Omega}}^{k}+\hat{\boldsymbol{\Omega}}^{k}\left((\hat{\boldsymbol{\Omega}}^{k})^{-1}-\hat{\boldsymbol{\Sigma}}^{k}\right)\hat{\boldsymbol{\Omega}}^{k}=2\hat{\boldsymbol{\Omega}}^{k}-\hat{\boldsymbol{\Omega}}^{k}\hat{\boldsymbol{\Sigma}}^{k}\hat{\boldsymbol{\Omega}}^{k}$. 
Thus, the test statistic for the testing problem \begin{equation}\label{as 78}
H_{0ij}:\Omega_{0,ij}^{1}=\Omega_{0,ij}^{2}\text{ vs }H_{1ij}:\Omega_{0,ij}^{1}\neq\Omega_{0,ij}^{2}
\end{equation}
is 
\begin{equation}\label{as 79}
    \hat{Y}_{ij}=\left(\hat{\boldsymbol{\Omega}}_{d}^{1}-\hat{\boldsymbol{\Omega}}_{d}^{2}\right)_{ij}=\left(2\hat{\boldsymbol{\Omega}}^{1}-\hat{\boldsymbol{\Omega}}^{1}\hat{\boldsymbol{\Sigma}}^{1}\hat{\boldsymbol{\Omega}}^{1}-(2\hat{\boldsymbol{\Omega}}^{2}-\hat{\boldsymbol{\Omega}}^{2}\hat{\boldsymbol{\Sigma}}^{2}\hat{\boldsymbol{\Omega}}^{2})\right)_{ij}
\end{equation}
, using the de-biased estimators. As the sparsity pattern of the precision matrices is not the same, we do not have $S_k=S$, for all $k=1,2,\ldots, K$. Since $s_k$ and $d_k$ can be different between the populations, let $s=max\{s_1,...,s_K\}$, and $d=max\{d_1,...,d_K\}$. For $K=2$, we have $s=max\{s_1,s_2\}$, and $d=max\{d_1,d_2\}$. Then, we have the following theorem:

\begin{theorem}\label{Theorem 4.3}
    Under Assumption \ref{Assumption 3}, Assumption \ref{Assumption 4}, Definition \ref{Definition 1.2.3}, and $\lambda\asymp\rho\asymp\sqrt{log\ p/n}$
and $(p+s)\sqrt{d}=o(\sqrt{n}/log\ p)$, 
\begin{equation}\label{as 80}
    \left(\hat{\boldsymbol{\Omega}}_{d}^{1}-\hat{\boldsymbol{\Omega}}_{d}^{2}\right)-\left(\boldsymbol{\Omega}_{0}^{1}-\boldsymbol{\Omega}_{0}^{2}\right)=\boldsymbol{\zeta}^{1}-\boldsymbol{\zeta}^{2}+\boldsymbol{rem},
\end{equation}
where $||\boldsymbol{rem}||_{\infty}=||\boldsymbol{rem}^{1}-\boldsymbol{rem}^{2}||_{\infty}=o_{p}(1/\sqrt{n})$ and $\boldsymbol{\zeta}^k$ and $\boldsymbol{rem}^{k}$ are same as in (\ref{as 57}).
Moreover, element-wise, we have $T_{ij}=(\hat{Y}_{ij}-Y_{ij})/\sigma_{ij}\stackrel{D}{\rightarrow} N(0,1)$,
where $Y_{ij}=(\Omega_{0,ij}^{1}-\Omega_{0,ij}^{2})$, and $\sigma_{ij}^{2}=(\left(\sigma_{ij}^{1}\right)^{2}/n_1)+(\left(\sigma_{ij}^{2}\right)^{2}/n_2)$.
Here, $\left(\sigma_{ij}^{k}\right)^{2}=var\left((\boldsymbol{\Omega}_{0i}^{k})^{T}\boldsymbol{x}_{1}^{k}(\boldsymbol{x}_{1}^{k})^{T}\boldsymbol{\Omega}_{0j}^{k}\right)=\Omega_{0,ii}^{k}\Omega_{0,jj}^{k}+(\Omega_{0,ij}^{k})^{2},k=1,2$.
\end{theorem}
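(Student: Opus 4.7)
The plan is to adapt the KKT--inversion decomposition from Section \ref{Section 3}, but to substitute the (weaker) operator-norm and Frobenius-norm bounds of Theorem \ref{Theorem 4.2} for the sharp sup-norm bounds that previously required the irrepresentability conditions. Inverting the KKT condition exactly as in (\ref{as 57}) gives, for each $k=1,2$, $\hat{\boldsymbol{\Omega}}_d^k - \boldsymbol{\Omega}_0^k = \boldsymbol{\zeta}^k + \boldsymbol{rem}^k$ with $\boldsymbol{\zeta}^k = -\boldsymbol{\Omega}_0^k \boldsymbol{W}^k \boldsymbol{\Omega}_0^k$. Subtracting the two populations and setting $\boldsymbol{rem} := \boldsymbol{rem}^1 - \boldsymbol{rem}^2$ yields the identity (\ref{as 80}) directly. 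It then remains (i) to verify the rate $\|\boldsymbol{rem}\|_\infty = o_p(1/\sqrt{n})$ and (ii) to run an element-wise central limit theorem on $\boldsymbol{\zeta}^1 - \boldsymbol{\zeta}^2$.

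For (i), I would bound each term of $\boldsymbol{rem}^k = -\boldsymbol{\beta}^k \boldsymbol{W}^k \boldsymbol{\Omega}_0^k - (\hat{\boldsymbol{\Omega}}^k \hat{\boldsymbol{\Sigma}}^k - I)\boldsymbol{\beta}^k$, with $\boldsymbol{\beta}^k = \hat{\boldsymbol{\Omega}}^k - \boldsymbol{\Omega}_0^k$, in the spirit of (\ref{as 66}), but replacing the Section~3 estimates with those from Theorem \ref{Theorem 4.2}. With $\lambda \asymp \rho \asymp \sqrt{\log p/n}$, Theorem \ref{Theorem 4.2} gives $|||\boldsymbol{\beta}^k|||_\infty = |||\boldsymbol{\beta}^k|||_1 = O_p((p+s)\sqrt{\log p/n})$ (by symmetry) and $\|\boldsymbol{\beta}^k\|_F = O_p(\sqrt{(p+s)\log p/n})$, while the sub-Gaussian vector condition supplies $\|\boldsymbol{W}^k \boldsymbol{\Omega}_0^k\|_\infty = O_p(\sqrt{\log p/n})$ and $\|\boldsymbol{W}^k\|_\infty = O_p(\sqrt{\log p/n})$. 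Row-sparsity of $\boldsymbol{\Omega}_0^k$ combined with Assumption \ref{Assumption 3} gives $|||\boldsymbol{\Omega}_0^k|||_\infty = O(\sqrt{d})$. Expanding $\hat{\boldsymbol{\Omega}}^k \hat{\boldsymbol{\Sigma}}^k - I = \boldsymbol{\beta}^k \hat{\boldsymbol{\Sigma}}^k + \boldsymbol{\Omega}_0^k \boldsymbol{W}^k$ and applying Cauchy--Schwarz-type splits $\|AB\|_\infty \leq \|A\|_2 \|B\|_2$ to extract Frobenius factors, the arithmetic shows that the dominant contribution is $O_p\big((p+s)\sqrt{d}\,\log p/n\big)$, which is $o_p(1/\sqrt{n})$ precisely under the assumed scaling $(p+s)\sqrt{d} = o(\sqrt{n}/\log p)$.

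For (ii), $\boldsymbol{\zeta}^k_{ij} = -\frac{1}{n_k}\sum_{\ell=1}^{n_k}\big((\boldsymbol{\Omega}_{0i}^k)^T \boldsymbol{x}_\ell^k (\boldsymbol{x}_\ell^k)^T \boldsymbol{\Omega}_{0j}^k - \Omega_{0,ij}^k\big)$ is an average of i.i.d.\ mean-zero random variables with variance $(\sigma_{ij}^k)^2/n_k$; sub-Gaussianity supplies the Lindeberg condition, so $\sqrt{n_k}\,\boldsymbol{\zeta}^k_{ij}/\sigma_{ij}^k \stackrel{D}{\rightarrow} N(0,1)$. Since the two samples are independent, $(\boldsymbol{\zeta}^1_{ij} - \boldsymbol{\zeta}^2_{ij})/\sigma_{ij} \stackrel{D}{\rightarrow} N(0,1)$ with $\sigma_{ij}^2 = (\sigma_{ij}^1)^2/n_1 + (\sigma_{ij}^2)^2/n_2$. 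Combined with $|\boldsymbol{rem}_{ij}|/\sigma_{ij} = o_p(1)$ via Slutsky (using $\sigma_{ij} \asymp n^{-1/2}$), the statistic $T_{ij} = (\hat{Y}_{ij} - Y_{ij})/\sigma_{ij}$ is asymptotically standard normal.

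The main obstacle is step (i). Without irrepresentability we have no element-wise sup-norm control on $\boldsymbol{\beta}^k$, only the inflated operator-norm bound from Theorem \ref{Theorem 4.2}; this forces a careful juggling of the three norms $|||\cdot|||_\infty$, $\|\cdot\|_F$, and $\|\cdot\|_\infty$, and the row-sparsity factor $|||\boldsymbol{\Omega}_0^k|||_\infty = O(\sqrt{d})$ is what ultimately shows up in the scaling condition. In particular, the two summands $\|\boldsymbol{\beta}^k \boldsymbol{\Omega}_0^k \boldsymbol{W}^k\|_\infty$ and $\|(\hat{\boldsymbol{\Sigma}}^k - \boldsymbol{\Sigma}_0^k)\boldsymbol{\beta}^k\|_\infty$ are the most delicate and demand these mixed-norm decompositions in order to yield the $\sqrt{d}$ factor rather than a $\sqrt{p}$ factor.
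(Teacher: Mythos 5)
Your proposal is correct and follows essentially the same route as the paper's proof: the same KKT inversion yielding (\ref{as 80}), the same combination of the Theorem \ref{Theorem 4.1} bound $|||\hat{\boldsymbol{\Omega}}^{k}-\boldsymbol{\Omega}_{0}^{k}|||_{1}=O_p((p+s)\lambda)$ with $||\boldsymbol{W}^{k}||_{\infty}=O_p(\sqrt{\log p/n})$ and the Cauchy--Schwarz bound $|||\boldsymbol{\Omega}_{0}^{k}|||_{1}\leq\sqrt{d+1}\,\Lambda_{max}(\boldsymbol{\Omega}_{0}^{k})$ to obtain the $(p+s)\sqrt{d}\,\log p/n=o(1/\sqrt{n})$ remainder rate under the assumed scaling, and the same Lindeberg CLT plus independence of the two samples and Slutsky for the limit of $T_{ij}$. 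The only deviation is in the second remainder summand, where the paper bounds $\hat{\boldsymbol{\Omega}}^{k}\hat{\boldsymbol{\Sigma}}^{k}-I$ via the KKT identity $I-\hat{\boldsymbol{\Sigma}}^{k}\hat{\boldsymbol{\Omega}}^{k}=(\lambda\hat{\boldsymbol{Z}}^{k}+\rho\hat{\boldsymbol{M}}^{k})\hat{\boldsymbol{\Omega}}^{k}$, giving a factor $(\lambda+\rho)|||\hat{\boldsymbol{\Omega}}^{k}|||_{1}$, whereas you expand it as $\boldsymbol{\beta}^{k}\hat{\boldsymbol{\Sigma}}^{k}+\boldsymbol{\Omega}_{0}^{k}\boldsymbol{W}^{k}$ and use Frobenius-type factors; both reach the required rate, your variant only needing the additional (easy, in this moderately high-dimensional regime) fact that the spectral norm of $\hat{\boldsymbol{\Sigma}}^{k}$ stays bounded in probability.
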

\begin{proof}
  Note that, 
  \begin{equation*}
      ||\boldsymbol{rem}||_{\infty}\leq\sum_{k=1}^{2}||(\hat{\boldsymbol{\Omega}}^{k}-\boldsymbol{\Omega}_{0}^{k})\boldsymbol{W}^{k}\boldsymbol{\Omega}_{0}^{k}||_{\infty}+\sum_{k=1}^{2}||(\hat{\boldsymbol{\Omega}}^{k}\hat{\boldsymbol{\Sigma}}^{k}-I)(\hat{\boldsymbol{\Omega}}^{k}-\boldsymbol{\Omega}_{0}^{k})||_{\infty}.
  \end{equation*}
Multiplying the KKT conditions by $\hat{\boldsymbol{\Omega}}^{k}$
on both sides, we have $I-\hat{\boldsymbol{\Sigma}}^{k}\hat{\boldsymbol{\Omega}}^{k}=(\lambda\hat{\boldsymbol{Z}}^{k}+\rho\hat{\boldsymbol{M}}^{k})\hat{\boldsymbol{\Omega}}^{k}$.
Thus, 
\begin{equation*}
    ||\boldsymbol{rem}||_{\infty}\leq\sum_{k=1}^{2}|||(\hat{\boldsymbol{\Omega}}^{k}-\boldsymbol{\Omega}_{0}^{k})|||_{1}||\boldsymbol{W}^{k}||_{\infty}|||\boldsymbol{\Omega}_{0}^{k}|||_{1}+\sum_{k=1}^{2}|||(\hat{\boldsymbol{\Omega}}^{k}-\boldsymbol{\Omega}_{0}^{k})|||_{1}||\lambda\hat{\boldsymbol{Z}}^{k}+\rho\hat{\boldsymbol{M}}^{k}||_{\infty}|||\hat{\boldsymbol{\Omega}}^{k}|||_{1}.
\end{equation*}
Now, $||\lambda\hat{\boldsymbol{Z}}^{k}+\rho\hat{\boldsymbol{M}}^{k}||_{\infty}\leq||\lambda\hat{\boldsymbol{Z}}^{k}||_{\infty}+\rho||\hat{\boldsymbol{M}}^{k}||_{\infty}\leq(\lambda+\rho)$.
Also, $|||(\hat{\boldsymbol{\Omega}}^{k}-\boldsymbol{\Omega}_{0}^{k})|||_{1}\leq b(p+s)\lambda$,
from Theorem \ref{Theorem 4.1}, where $b$ depends on $L$. From Cauchy Schwarz inequality,
it can be shown that $|||\boldsymbol{\Omega}_{0}^{k}|||_{1}\leq\sqrt{d+1}\Lambda_{max}(\boldsymbol{\Omega}_{0}^{k})$.
Similarly, the bound of $|||\hat{\boldsymbol{\Omega}}^{k}|||_{1}$
can be obtained by 
\begin{equation*}
    |||\hat{\boldsymbol{\Omega}}^{k}|||_{1}\leq|||\hat{\boldsymbol{\Omega}}^{k}-\boldsymbol{\Omega}_{0}^{k}|||_{1}+|||\boldsymbol{\Omega}_{0}^{k}|||_{1}\sqrt{d+1}\Lambda_{max}(\boldsymbol{\Omega}_{0}^{k}),
\end{equation*}
due to the rate of $\lambda$. Also, the sub-Gaussian random vector
implies $||\boldsymbol{W}^{k}||_{\infty}=O_{P}\left(\sqrt{\frac{log\ p}{n}}\right)$.
Thus, we have $||\boldsymbol{rem}||_{\infty}=o_{p}(1/\sqrt{n})$.
Using Lindeberg CLT and bounded fourth moments, we have the asymptotic
normality.
\end{proof}
A consistent estimator of $\sigma_{ij}^{2}$ is $\hat{\sigma}_{ij}^{2}=\left(\bigg(\hat{\Omega}_{ii}^{1}\hat{\Omega}_{jj}^{1}+\left(\hat{\Omega}_{ij}^{1}\right)^{2}\bigg)\bigg/n_1\right)+\left(\bigg(\hat{\Omega}_{ii}^{2}\hat{\Omega}_{jj}^{2}+\left(\hat{\Omega}_{ij}^{2}\right)^{2}\bigg)\bigg/n_2\right)$.
Thus we reject $H_{0ij}$, at $\alpha$ level of significance if 
\begin{equation}\label{as 81}
    \bigg|\frac{\hat{Y}_{ij}}{\hat{\sigma}_{ij}}\bigg|>\tau_{\alpha/2}   
\end{equation}
, where $\tau_{\alpha/2}$ is the $(1-\alpha/2)$ upper quantile of
a standard normal distribution.
We extend this theorem to more than $K>2$ classes. For $K>2$ classes, suppose we want to test $H_0:\sum_{k=1}^{K}a_k\Omega_{0ij}^{k}=0$, where $a_k$ are constants. The theorem shall be as follows:
\begin{theorem}\label{Theorem 4.4}
    Under similar assumptions of Theorem \ref{Theorem 4.3}, we have 
    \begin{equation}\label{as 82}
        \hat{\boldsymbol{Y}}_{lc}-\boldsymbol{Y}_{lc}=\sum_{k=1}^{K}\boldsymbol{\zeta^k}+\boldsymbol{rem}_{lc}
    \end{equation}, where $\boldsymbol{rem}_{lc}=\sum_{k=1}^{K}a_k\boldsymbol{rem}^k$, $\hat{\boldsymbol{Y}}_{lc}=\sum_{k=1}^{K}a_k\hat{\boldsymbol{\Omega}}_{d}^k$, and $\boldsymbol{Y}_{lc}=\sum_{k=1}^{K}a_k\boldsymbol{\Omega}_{0}^{k}$. We have $||\boldsymbol{rem^k}||_{\infty}=o_p(1/\sqrt{n})$. Hence, we have $||\boldsymbol{rem}_{lc}||_{\infty}=o_p(1/\sqrt{n})$. Thus, elementwise, we have the asymptotic normality \begin{equation}\label{as 83}
        T_{ij}=\frac{(\hat{\boldsymbol{Y}}_{lc}-\boldsymbol{Y}_{lc})_{ij}}{\sigma_{ij}}\stackrel{D}{\rightarrow} N(0,1),
    \end{equation}
    where $\sigma_{ij}^2=\sum_{k=1}^{K}a_{k}^{2}(\sigma_{ij}^{k})^2/n_k=\sum_{k=1}^{K}a_{k}^{2}(\Omega_{0,ii}^{k}\Omega_{0,jj}^{k}+(\Omega_{0,ij}^{k})^{2})/n_k$.
\end{theorem}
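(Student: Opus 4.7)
The plan is to reduce Theorem \ref{Theorem 4.4} to a per-population application of the analysis already carried out in Theorem \ref{Theorem 4.3}, combined with a CLT argument that exploits independence across the $K$ populations. The starting point is the KKT-inversion identity from (\ref{as 57}): for every population $k$,
\begin{equation*}
\hat{\boldsymbol{\Omega}}_d^{k}-\boldsymbol{\Omega}_0^{k}=\boldsymbol{\zeta}^{k}+\boldsymbol{rem}^{k},
\end{equation*}
so multiplying by $a_k$ and summing over $k$ gives exactly the decomposition (\ref{as 82}) with $\boldsymbol{rem}_{lc}=\sum_{k=1}^{K}a_k\boldsymbol{rem}^{k}$. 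This reduces the theorem to two tasks: (i) show $\|\boldsymbol{rem}_{lc}\|_\infty=o_p(1/\sqrt n)$, and (ii) establish element-wise asymptotic normality of $\sum_{k=1}^K a_k\boldsymbol{\zeta}^{k}_{ij}$.

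For task (i), I would repeat the bound from the proof of Theorem \ref{Theorem 4.3} for each $k$ separately. That bound controls $\|\boldsymbol{rem}^{k}\|_\infty$ by the product $|||\hat{\boldsymbol{\Omega}}^{k}-\boldsymbol{\Omega}_0^{k}|||_1\cdot(\|\boldsymbol{W}^{k}\|_\infty|||\boldsymbol{\Omega}_0^{k}|||_1+(\lambda+\rho)|||\hat{\boldsymbol{\Omega}}^{k}|||_1)$, where $|||\hat{\boldsymbol{\Omega}}^{k}-\boldsymbol{\Omega}_0^{k}|||_1\le b(p+s_k)\lambda$ by Theorem \ref{Theorem 4.2} (with $s_k\le s$), $|||\boldsymbol{\Omega}_0^{k}|||_1\le\sqrt{d_k+1}\Lambda_{\max}(\boldsymbol{\Omega}_0^{k})$ from Assumption \ref{Assumption 3}, and $\|\boldsymbol{W}^{k}\|_\infty=O_p(\sqrt{\log p/n})$ from Definition \ref{Definition 1.2.3}. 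Under $\lambda\asymp\rho\asymp\sqrt{\log p/n}$ and $(p+s)\sqrt d=o(\sqrt n/\log p)$, the same algebra as in Theorem \ref{Theorem 4.3} yields $\|\boldsymbol{rem}^{k}\|_\infty=o_p(1/\sqrt n)$ for each $k$. Since $K$ is a fixed finite constant and the $a_k$ are fixed constants, the triangle inequality gives
\begin{equation*}
\|\boldsymbol{rem}_{lc}\|_\infty\le\sum_{k=1}^{K}|a_k|\,\|\boldsymbol{rem}^{k}\|_\infty=o_p(1/\sqrt n),
\end{equation*}
which is the statement on the remainder in (\ref{as 82}).

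For task (ii), I would fix $(i,j)$ and note that $\boldsymbol{\zeta}^{k}_{ij}=-(\boldsymbol{\Omega}_{0i}^{k})^T\boldsymbol{W}^{k}\boldsymbol{\Omega}_{0j}^{k}$ is an average of $n_k$ i.i.d.\ mean-zero random variables $\xi_t^{k}=-(\boldsymbol{\Omega}_{0i}^{k})^T(\boldsymbol{x}_t^{k}(\boldsymbol{x}_t^{k})^T-\boldsymbol{\Sigma}_0^{k})\boldsymbol{\Omega}_{0j}^{k}/n_k$, each with variance $(\sigma_{ij}^{k})^2/n_k^2$ summing to $(\sigma_{ij}^{k})^2/n_k$. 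Because the $K$ populations are independent, $\sum_{k=1}^{K}a_k\boldsymbol{\zeta}_{ij}^{k}$ is a sum of independent centered terms with total variance exactly $\sigma_{ij}^{2}=\sum_{k=1}^{K}a_k^{2}(\sigma_{ij}^{k})^{2}/n_k$. Under Definition \ref{Definition 1.2.3} and Assumption \ref{Assumption 3} the $\xi_t^{k}$ have bounded fourth moments uniformly, so Lindeberg's condition is verified in standard fashion (via Lyapunov with $\delta=2$ and Assumption \ref{Assumption 4} which keeps $n/n_k$ bounded), giving $(\sum_{k=1}^{K}a_k\boldsymbol{\zeta}_{ij}^{k})/\sigma_{ij}\stackrel{D}{\to}N(0,1)$. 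Combining with $\|\boldsymbol{rem}_{lc}\|_\infty=o_p(1/\sqrt n)=o_p(\sigma_{ij})$ and Slutsky's theorem yields (\ref{as 83}).

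The main obstacle is really the remainder bound, and specifically the step that requires $(p+s)\sqrt d=o(\sqrt n/\log p)$; everything else is a mechanical lift of the $K=2$ argument, because independence across populations makes the Lindeberg verification on the $\boldsymbol{\zeta}$ side essentially the same as for a single population. The only additional bookkeeping is replacing $(p+s_1+s_2)$ by $(p+s)$ with $s=\max_k s_k$ in the remainder estimate and absorbing the factor $K$ (finite) into constants throughout.
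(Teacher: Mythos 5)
Your proposal is correct and follows essentially the same route the paper intends: Theorem \ref{Theorem 4.4} is stated without a separate proof precisely because it is the mechanical lift of Theorem \ref{Theorem 4.3} that you carry out, namely the per-population KKT-inversion decomposition, the remainder bound via Theorem \ref{Theorem 4.2} and the rate condition $(p+s)\sqrt{d}=o(\sqrt{n}/\log p)$ combined by the triangle inequality over finitely many $k$, and a Lindeberg/Lyapunov CLT across independent populations followed by Slutsky. Your version also correctly supplies the factor $a_k$ in the leading term $\sum_{k}a_k\boldsymbol{\zeta}^{k}$, which is consistent with the stated variance $\sigma_{ij}^2=\sum_{k}a_k^2(\sigma_{ij}^{k})^2/n_k$.
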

A consistent estimate of $\sigma_{ij}^{2}$ is $\hat{\sigma}_{ij}^{2}=\sum_{k=1}^{K}a_{k}^{2}\left(\hat{\Omega}_{ii}^{k}\hat{\Omega}_{jj}^{k}+\left(\hat{\Omega}_{ij}^{k}\right)^{2}\right)\bigg/n_k$. Thus we reject $H_{0ij}$, at $\alpha$ level of significance if 
\begin{equation}\label{as 84}
    \bigg|\frac{(\hat{\boldsymbol{Y}}_{lc})_{ij}}{\hat{\sigma}_{ij}}\bigg|>\tau_{\alpha/2}.   
\end{equation}
\begin{Remark}\label{Remark 9}
Without assuming irrepresentability condition, we can only obtain the asymptotic distribution up to the regime where the dimension grows at a much slower rate than the sample size. This is the sacrifice we make without Assumption \ref{Assumption 1} and \ref{Assumption 2}.
\end{Remark}

\section{\large Empirical results}\label{Section 5}
\subsection{Simulation study}\label{Subsection 5.1}
We perform the illustration of our results, such as model consistency, sup-norm difference, and asymptotic normality of the test statistics, empirically. Before that, we will discuss how we are selecting the penalty parameters. Our theory suggests that the penalty parameters should be $\lambda=C_{1}\sqrt{\frac{log\ p}{n}}$, and $\rho=C_{2}\sqrt{\frac{log\ p}{n}}$. We use data-driven methods such as the extended-BIC for selecting the penalty parameters. The e-BIC \cite{foygel2010extended} for multiple models can be defined as 
\begin{eqnarray*}
e-BIC_{\gamma}(C_{1},C_{2}) & = & -2\sum_{k=1}^{K}\left[log\ det\left(\hat{\boldsymbol{\Omega}}^{k}(C_{1},C_{2})\right)-trace\left(\hat{\boldsymbol{\Sigma}}^{k}\hat{\boldsymbol{\Omega}}^{k}(C_{1},C_{2})\right)\right]\\
 &  & +\sum_{k=1}^{K}|E_{k}(C_{1},C_{2})|log\ n+4\gamma log\ p\sum_{k=1}^{K}|E_{k}(C_{1},C_{2})|.
\end{eqnarray*}

We select $(C_{1},C_{2})$ using grid selection and select that value of $(C_{1},C_{2})$ such that $e-BIC$ is minimum at $\gamma=0.5$. 

For empirical illustrations of the results, we select chain graphs and star graphs. These graphs satisfy the irrepresentability conditions that were required for theoretical proofs of the results. The precision matrix of the chain graphs should have a tridiag($\rho^{k},1,\rho^{k})$ structure for population $k,k=1,2$, where $\rho^{1}=0.2$, and $\rho^{2}=0.35$ for populations 1 and 2, respectively. For the star graphs, the hub node has the maximum degree $d_{max}$. We select one of the coordinates randomly, say $X_{u}$, and select $d_{max}$ coordinates randomly, leaving out $X_{u}$. This $X_{u}$ shall serve as a hub node. The randomly selected coordinates shall serve as the nodes sharing an edge only with the hub node. The rest of the coordinates are completely disconnected from each other. Let $b$ be the set containing the index of $d_{max}$ coordinates selected randomly. We develop the precision matrix for population 1 as 
\begin{eqnarray*}
\Omega_{ij}^{1} & = & \begin{cases}
2 & ,i=j\\
0.3 & ,i=u,j\in b\\
0.3 & ,i\in b,j=u\\
0 & ,o.w.
\end{cases}
\end{eqnarray*}

, and the precision matrix for population 2 as

\begin{eqnarray*}
\Omega_{ij}^{2} & = & \begin{cases}
2.5 & ,i=j\\
0.45 & ,i=u,j\in b.\\
0.45 & ,i\in b,j=u\\
0 & ,o.w.
\end{cases}
\end{eqnarray*}

We take dimensions $p=50,75,100,150$, and generate $n$ samples from each class, where $n$ is between $200$ and $700$. The samples drawn are from multivariate gaussian distribution with mean $0$ and variance $\left(\boldsymbol{\Omega}^{k}\right)^{-1},k=1,2$. We repeat the process for $B=100$ times, and report our empirical results. The solutions of the group graphical lasso optimization problem are obtained using the $\texttt{JGL}$ function of \cite{danaher2014joint}. 
\begin{center}
\begin{figure}[H]
\centering{}\caption{Star shaped graphs}
\includegraphics[scale=0.26]{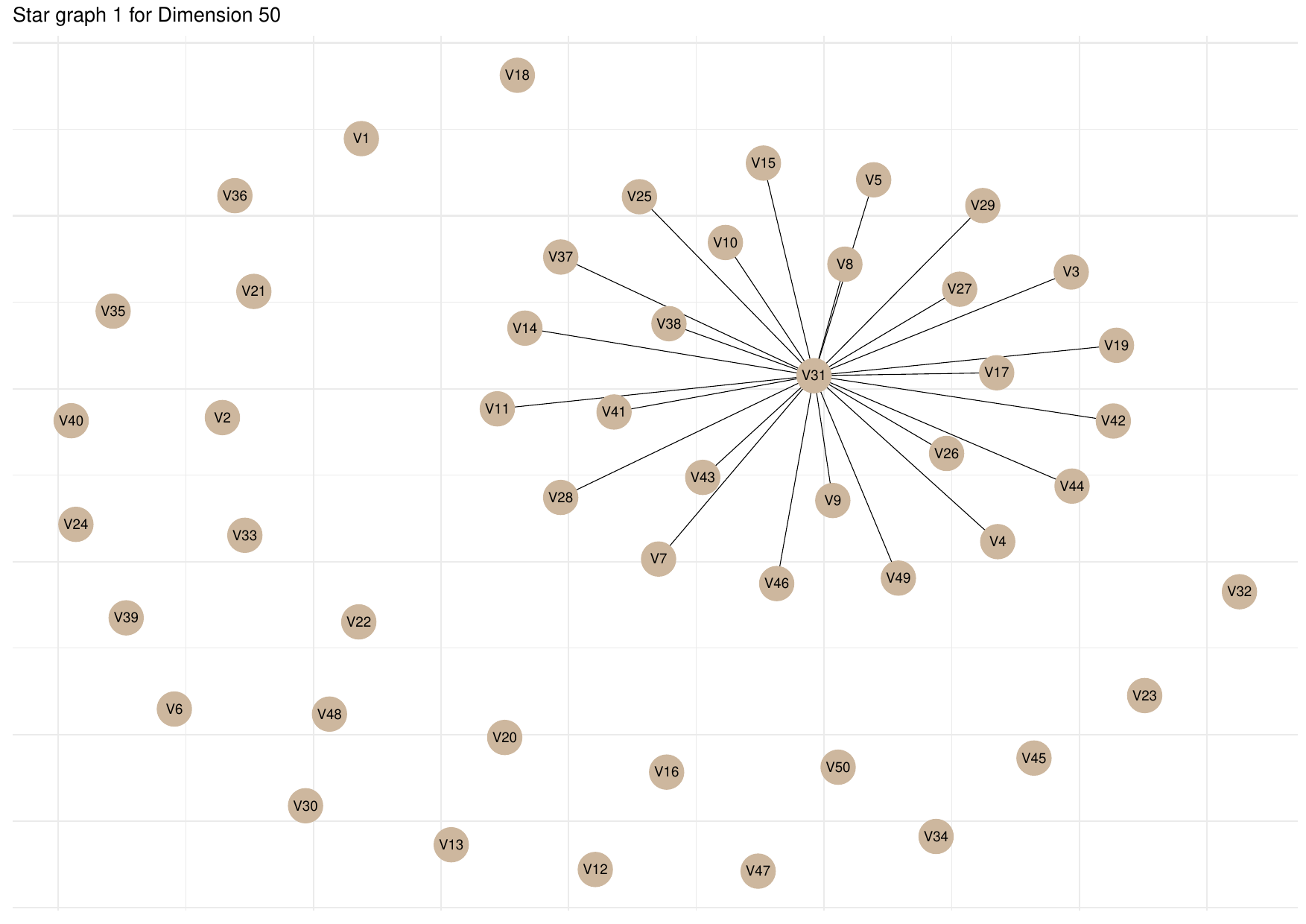}\includegraphics[scale=0.26]{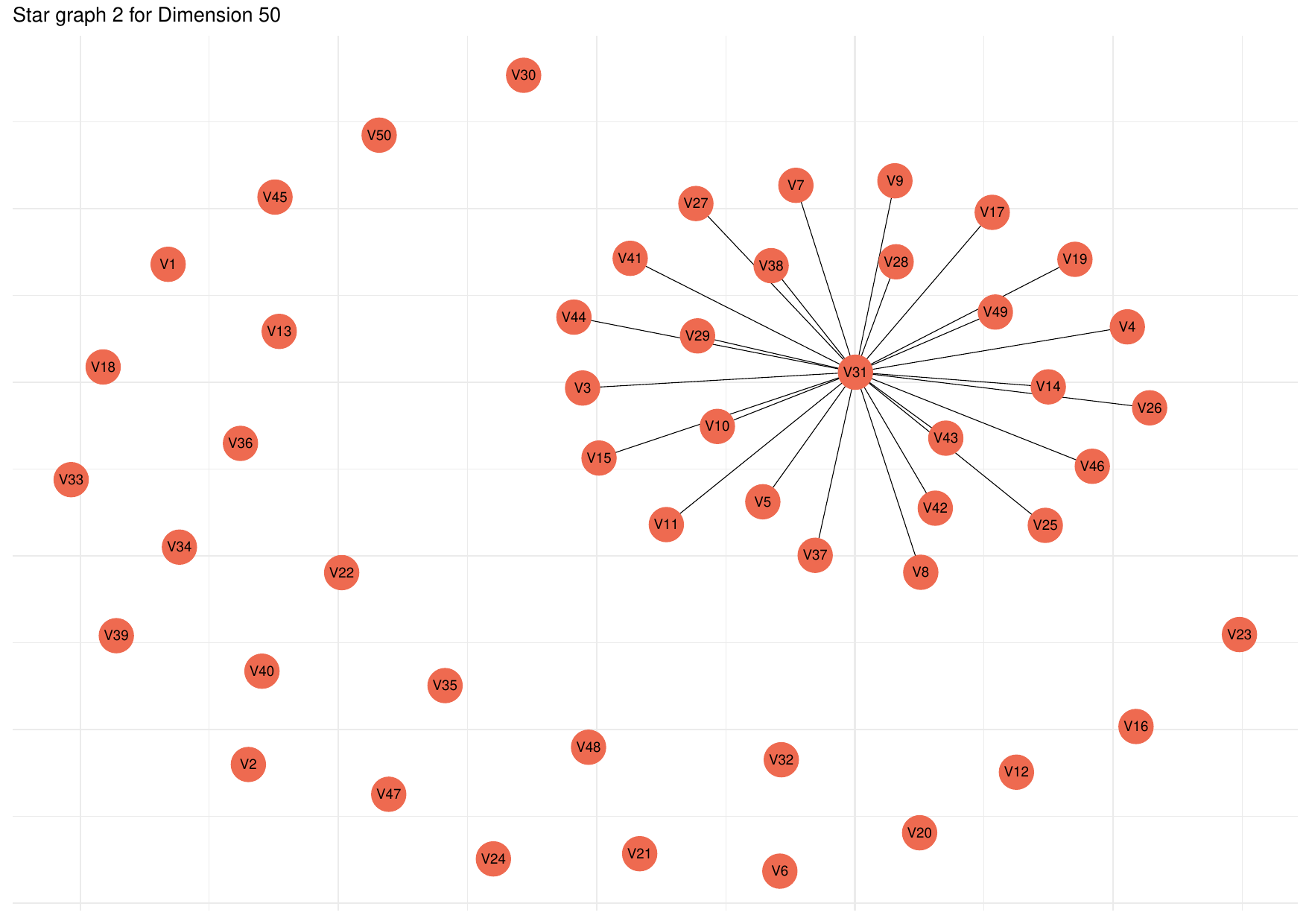}
\label{Figure 1}
\end{figure}
\par\end{center}
\subsubsection{Model consistency against sample size}
We illustrate the result, inferring about the probability of recovering the correctly signed edge-set for both populations in Figure \ref{Figure 2}. For a fixed sample size $n$ and dimension $p$, we report the proportion of the samples successfully identifying the edge set with signs matching from the true precision matrices for both the population models. For the star graphs, we take $d_{max}=25$.
\begin{center}
\begin{figure}[H]
\caption{Model consistency against sample size}

\centering{}\includegraphics[scale=0.26]{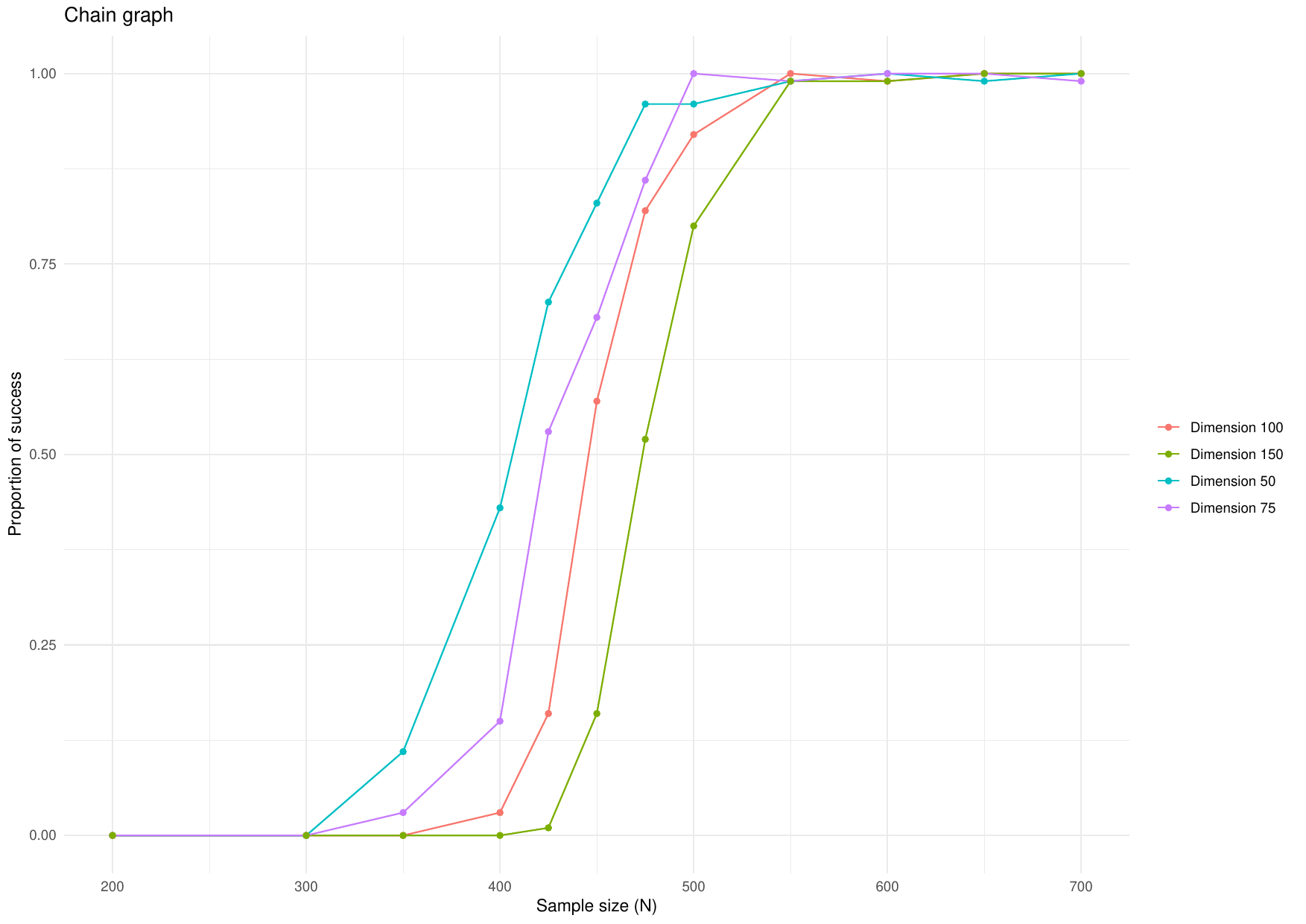}\includegraphics[scale=0.26]{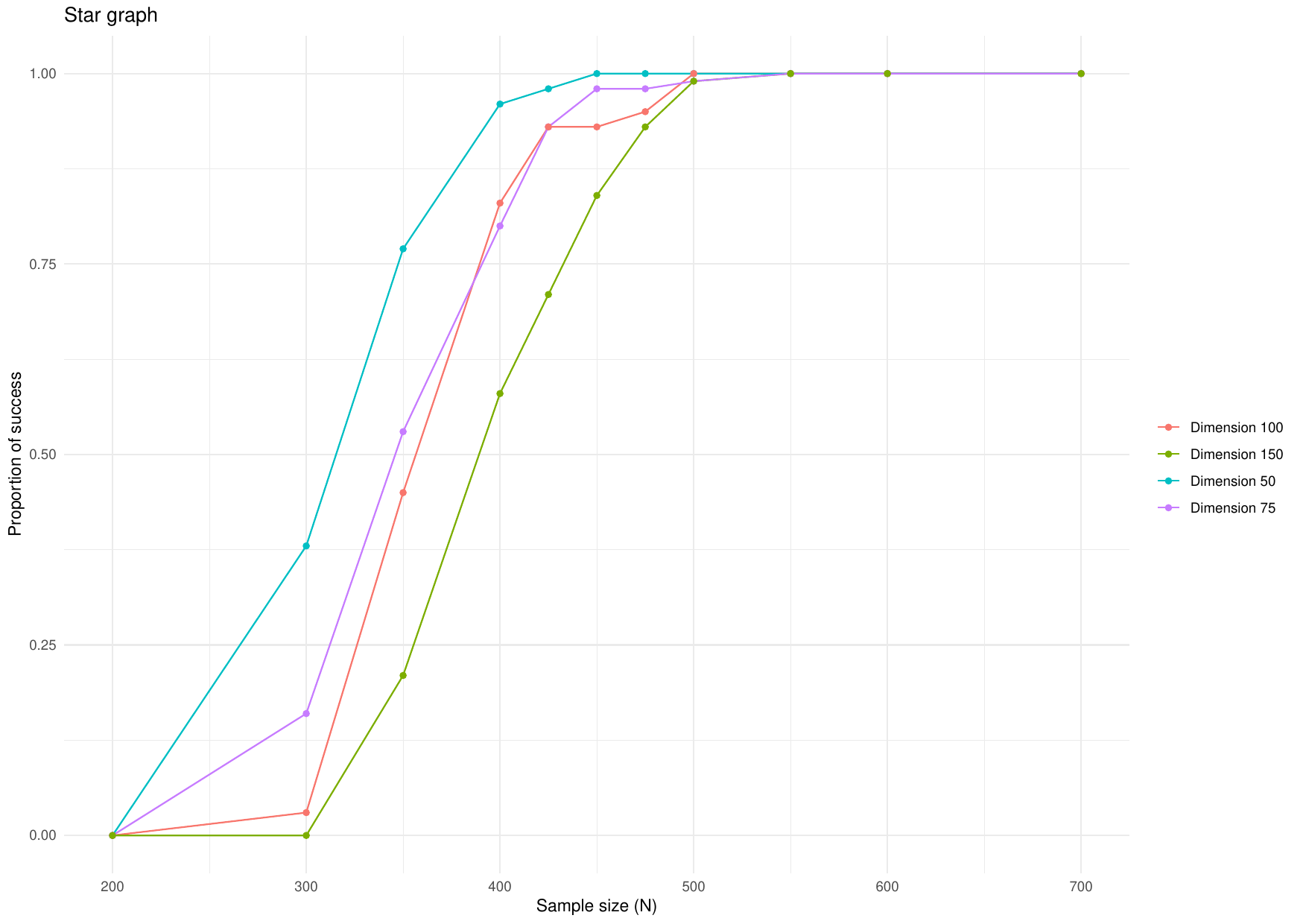}
\label{Figure 2}
\end{figure}
\par\end{center}

As it is expected, the probability of success shall start from 0, and then climb to 1 as the sample size increases, making the model consistent. Also, as the dimension increases, the consistency should be slower, and it can also be seen from the figures for both graphs.


\subsubsection{True positives and false positives against sample size}
We demonstrate the true positives and false positives against the sample size in this section. The number of true positives should increase as the sample size increases for both graphs, whereas the rate of false negatives should decrease. Figure \ref{Figure 3} and Figure \ref{Figure 4} demonstrate this interpretation. The results are given in terms of the number of true positive and false positive edges identified by the estimators.

\begin{figure}
\caption{True positive and false positive graphs against sample size (Chain
graph)}

\centering{}\includegraphics[scale=0.4]{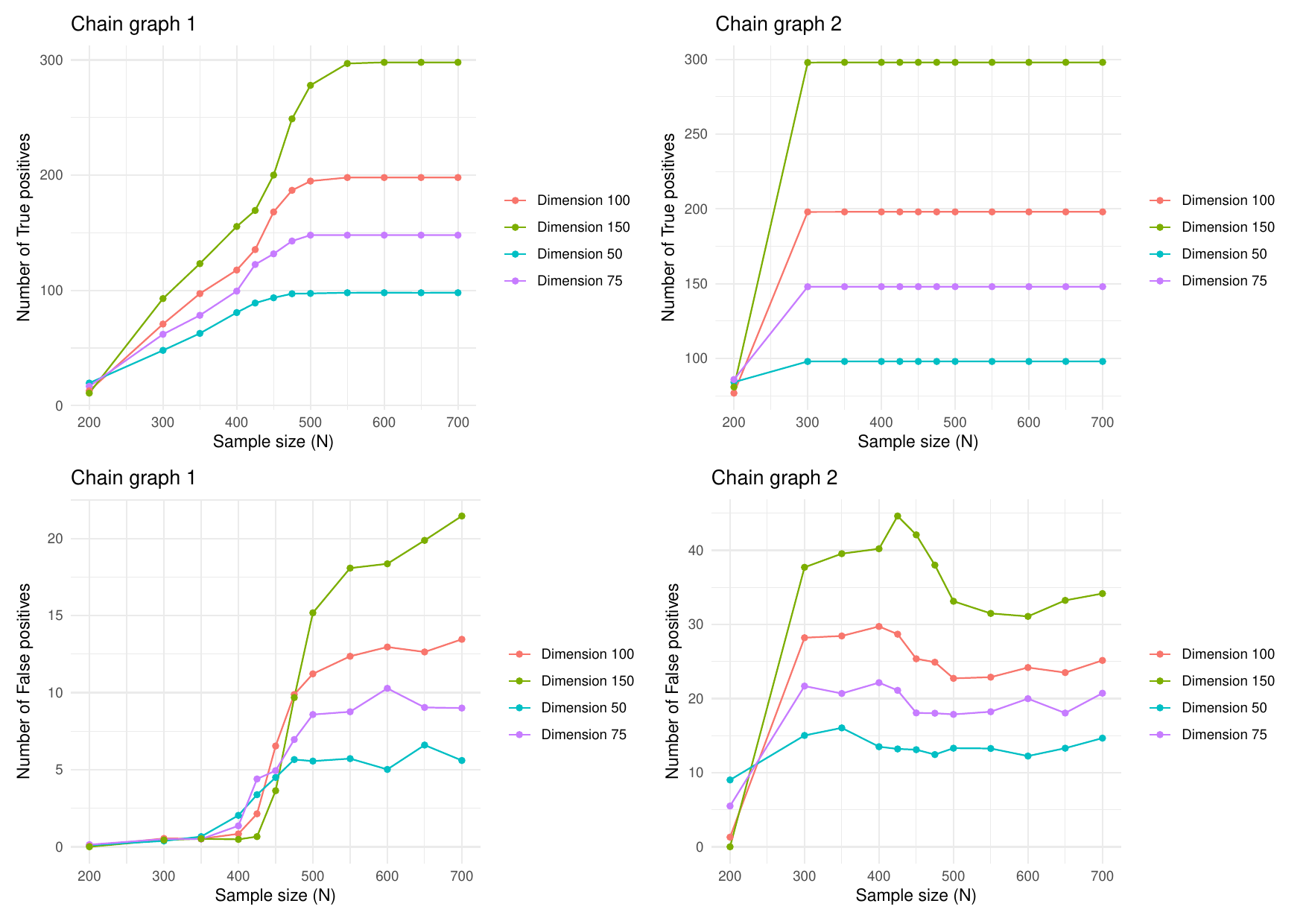}
\label{Figure 3}
\end{figure}

\begin{figure}
\caption{True positive and false positive graphs against sample size (Star
graph)}

\centering{}\includegraphics[scale=0.4]{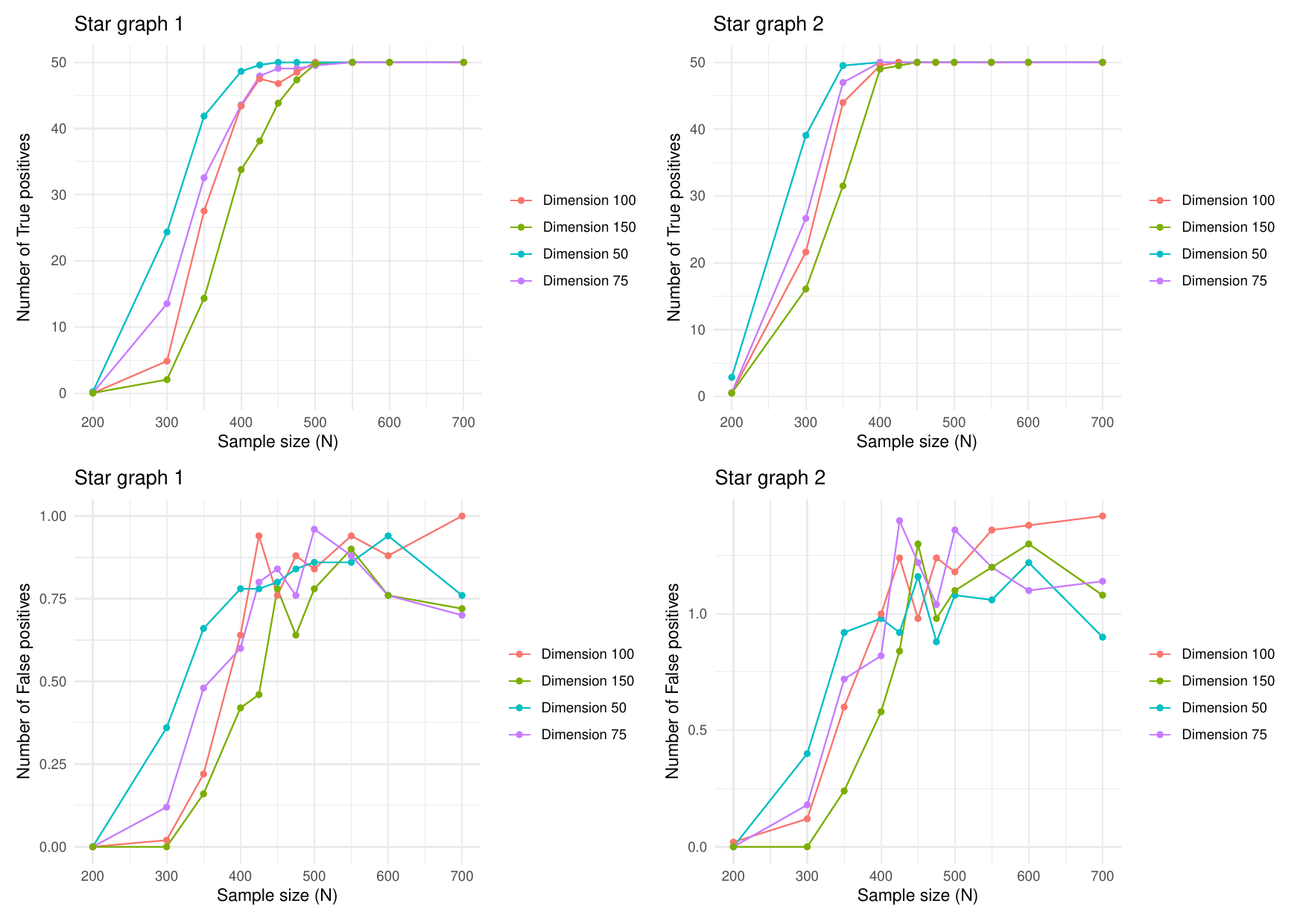}
\label{Figure 4}
\end{figure}
\subsubsection{Model consistency against maximum degree}
For this illustration, we select the star graph and take $p=100$ coordinates. However, the hub node has degree $d=25,30,36$. The precision matrix for population 1 has the structure
\begin{eqnarray*}
\Omega_{ij}^{1} & = & \begin{cases}
2 & ,i=j\\
\frac{5}{d} & ,i=u,j\in b\\
\frac{5}{d} & ,i\in b,j=u\\
0 & ,o.w.
\end{cases}
\end{eqnarray*}

, and the precision matrix for population 2 has the structure

\begin{eqnarray*}
\Omega_{ij}^{2} & = & \begin{cases}
2.5 & ,i=j\\
\frac{8}{d} & ,i=u,j\in b\\
\frac{8}{d} & ,i\in b,j=u\\
0 & ,o.w.
\end{cases}.
\end{eqnarray*}

We report the proportion of the samples successfully identifying the edge set with signs matching from the true precision matrices for both population models in Figure \ref{Figure 5}. As the maximum degree increases, the number of samples required for model consistency increases, as expected.

\begin{figure}[H]
\caption{Model consistency against maximum degree for star graphs}

\centering{}\includegraphics[scale=0.4]{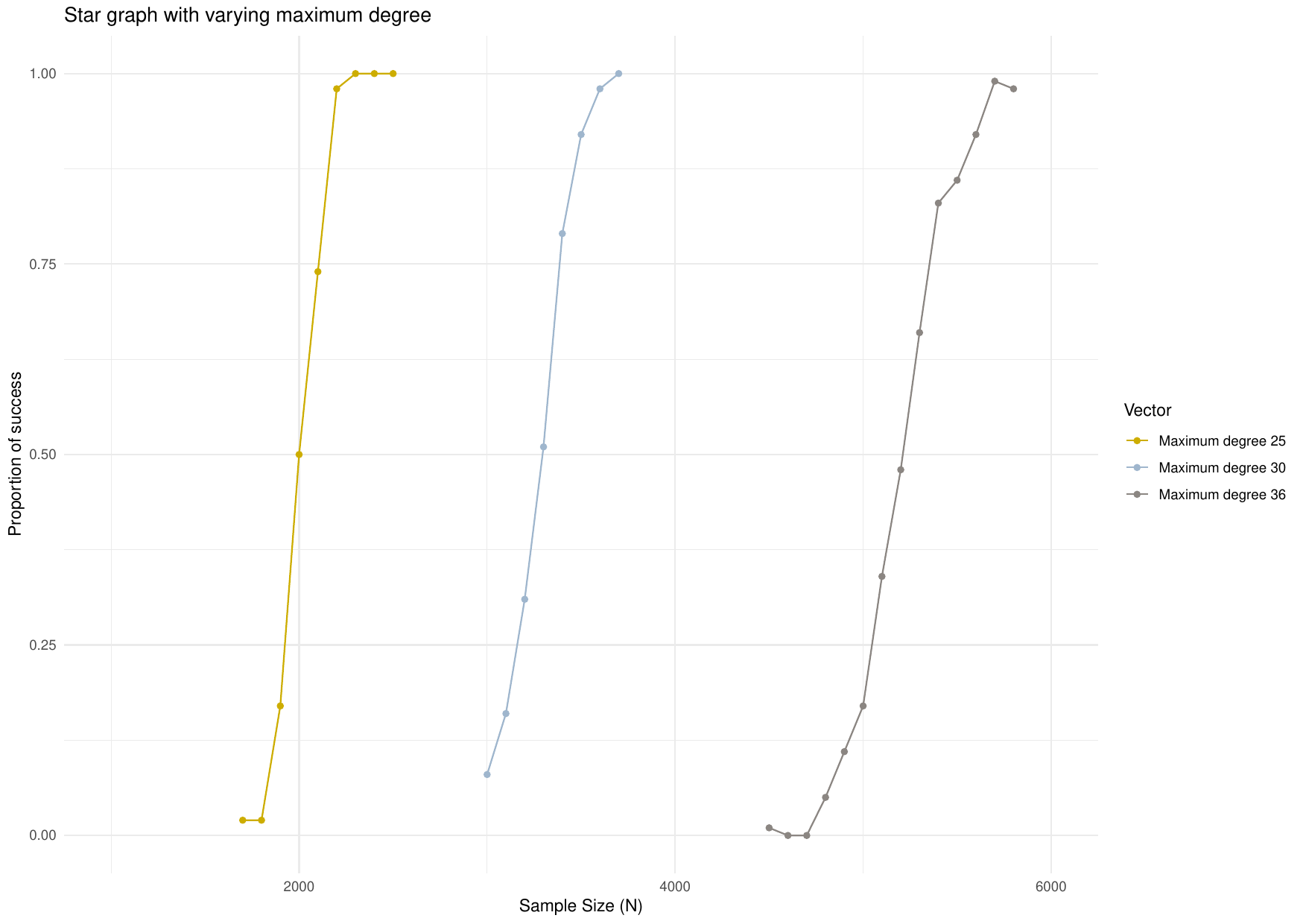}
\label{Figure 5}
\end{figure}
\subsubsection{Sup-norm convergence against sample size}
We report the average of the sup-norm distance between the predicted and true precision matrices for a fixed sample size and dimension, and for both populations. The sup-norm distance decreases as the sample size is increasing. However, the dimension affects the performance in the way that the convergence is slower as the dimension increases as reflected in Figure \ref{Figure 6} and Figure \ref{Figure 7}.

\begin{figure}[H]
\caption{Sup-norm distance against sample size (Chain graphs)}

\centering{}\includegraphics[scale=0.26]{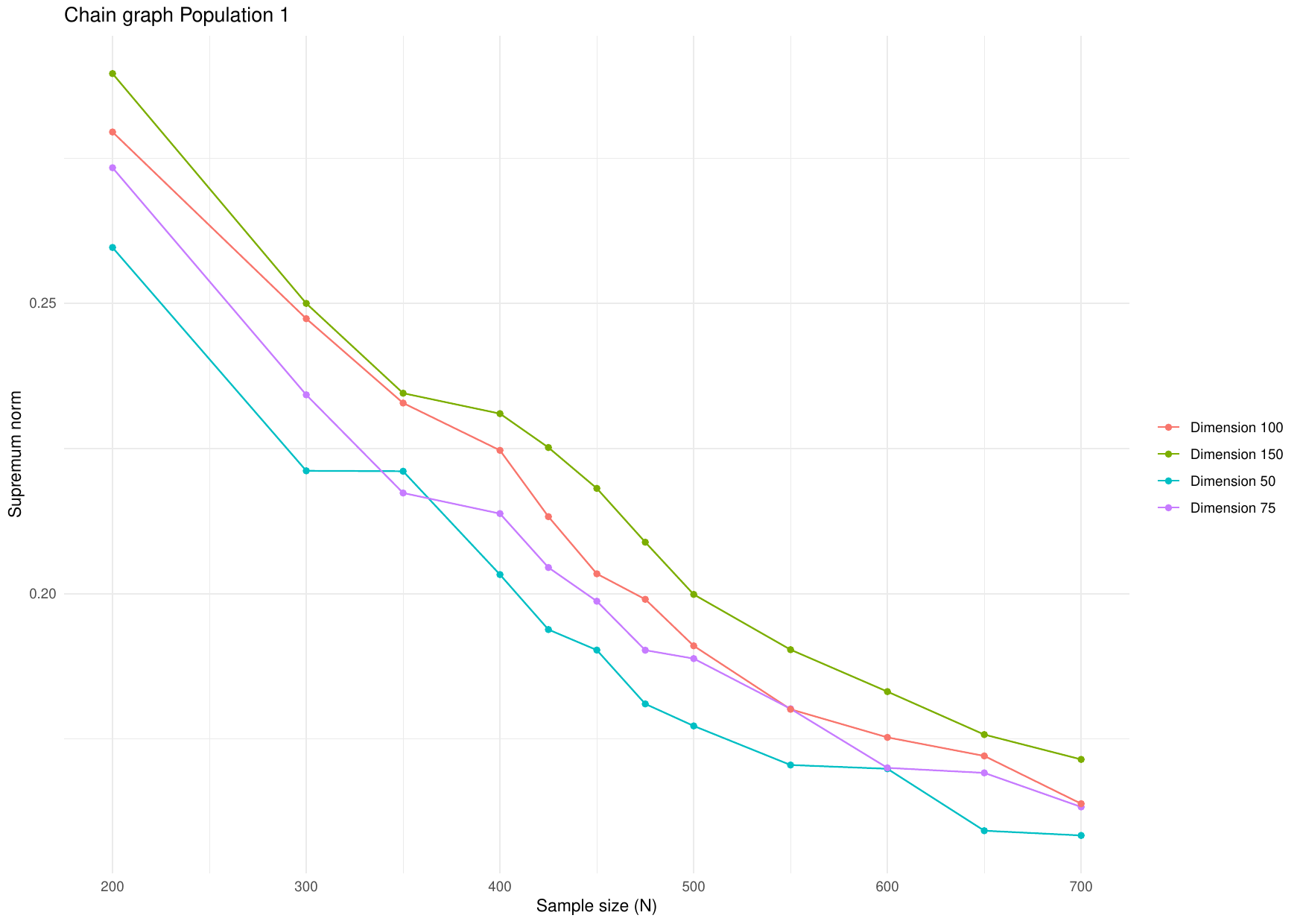}\includegraphics[scale=0.26]{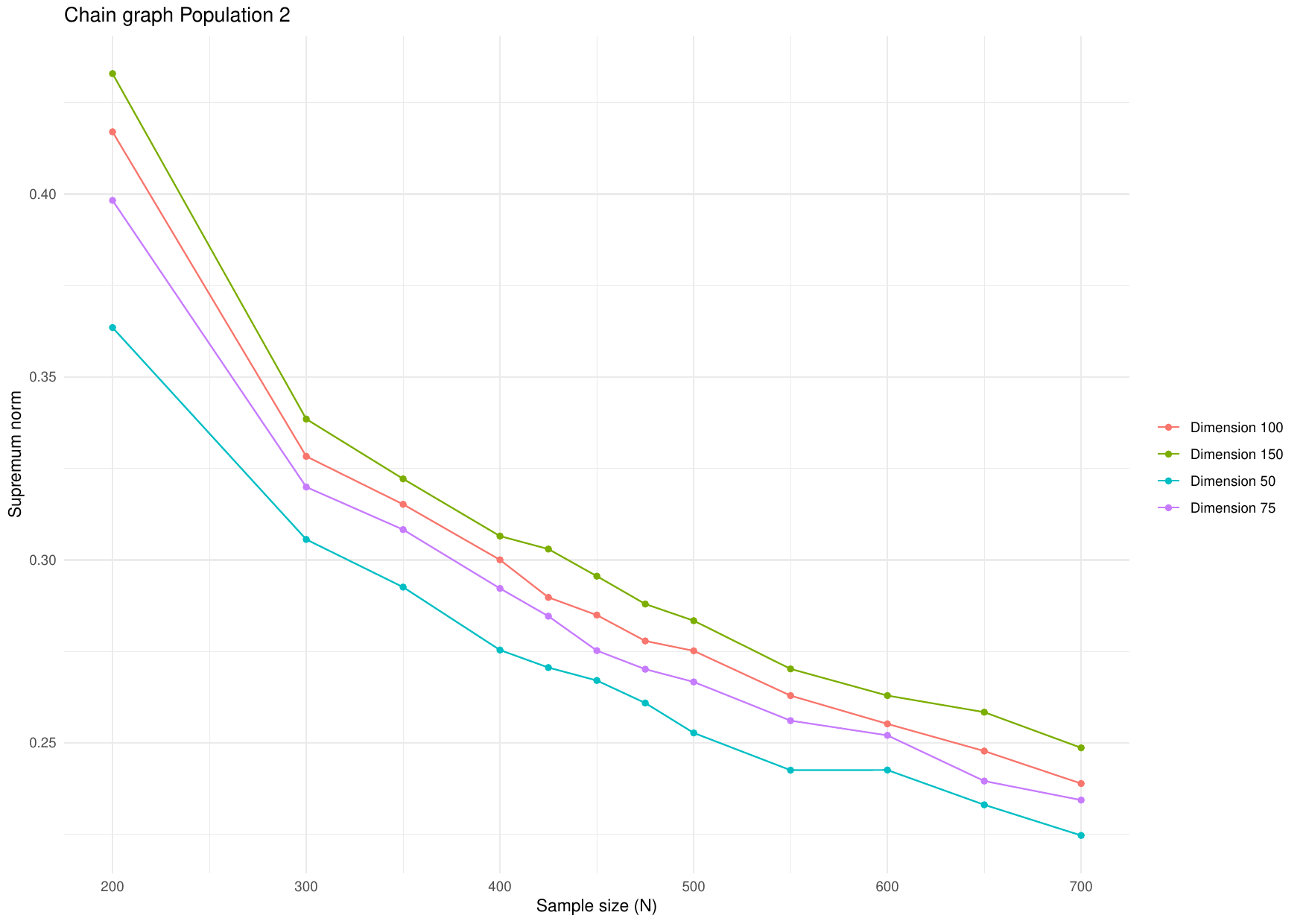}
\label{Figure 6}
\end{figure}

\begin{figure}[H]
\caption{Sup-norm distance against sample size(Star graphs)}

\centering{}\includegraphics[scale=0.26]{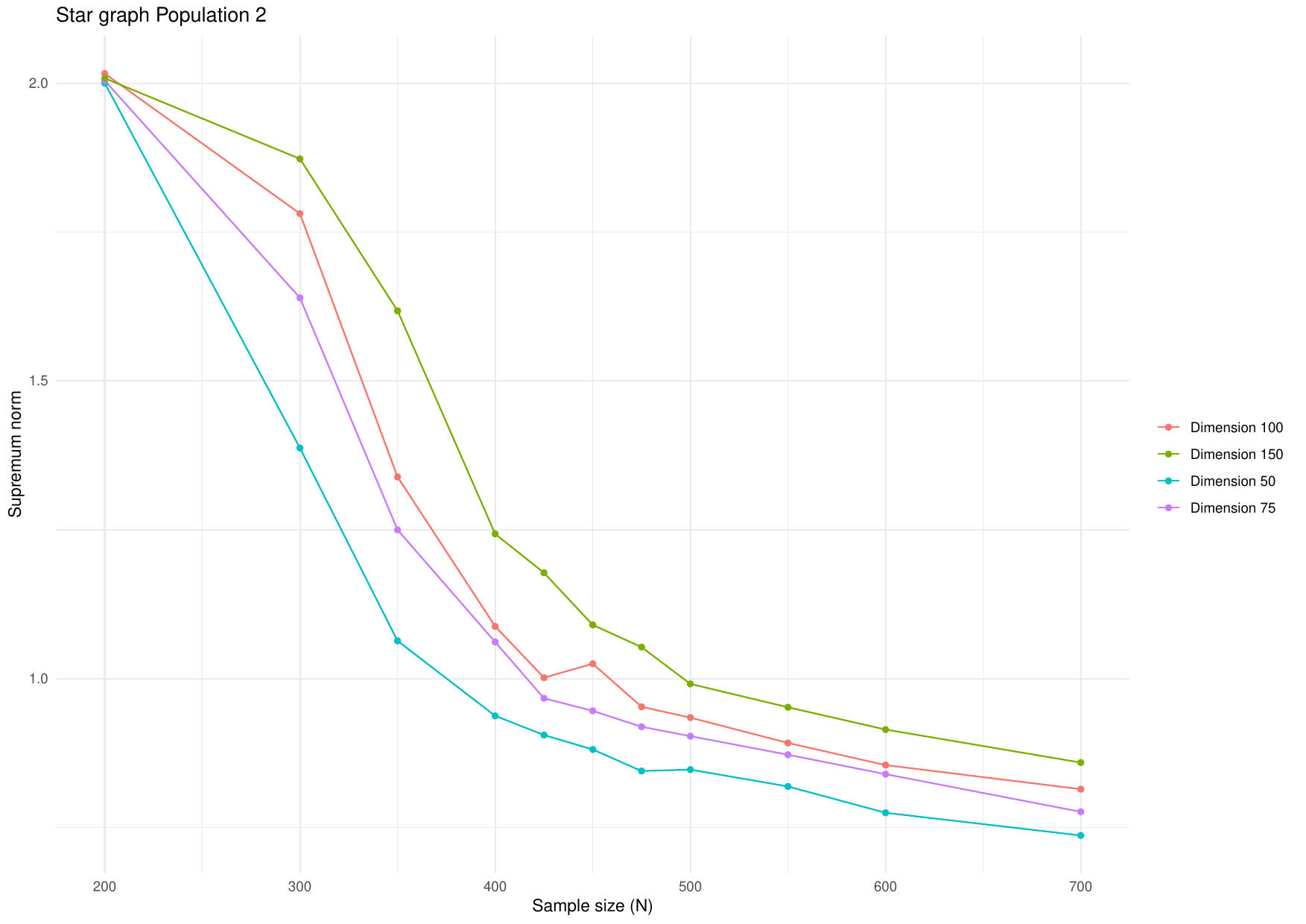}\includegraphics[scale=0.26]{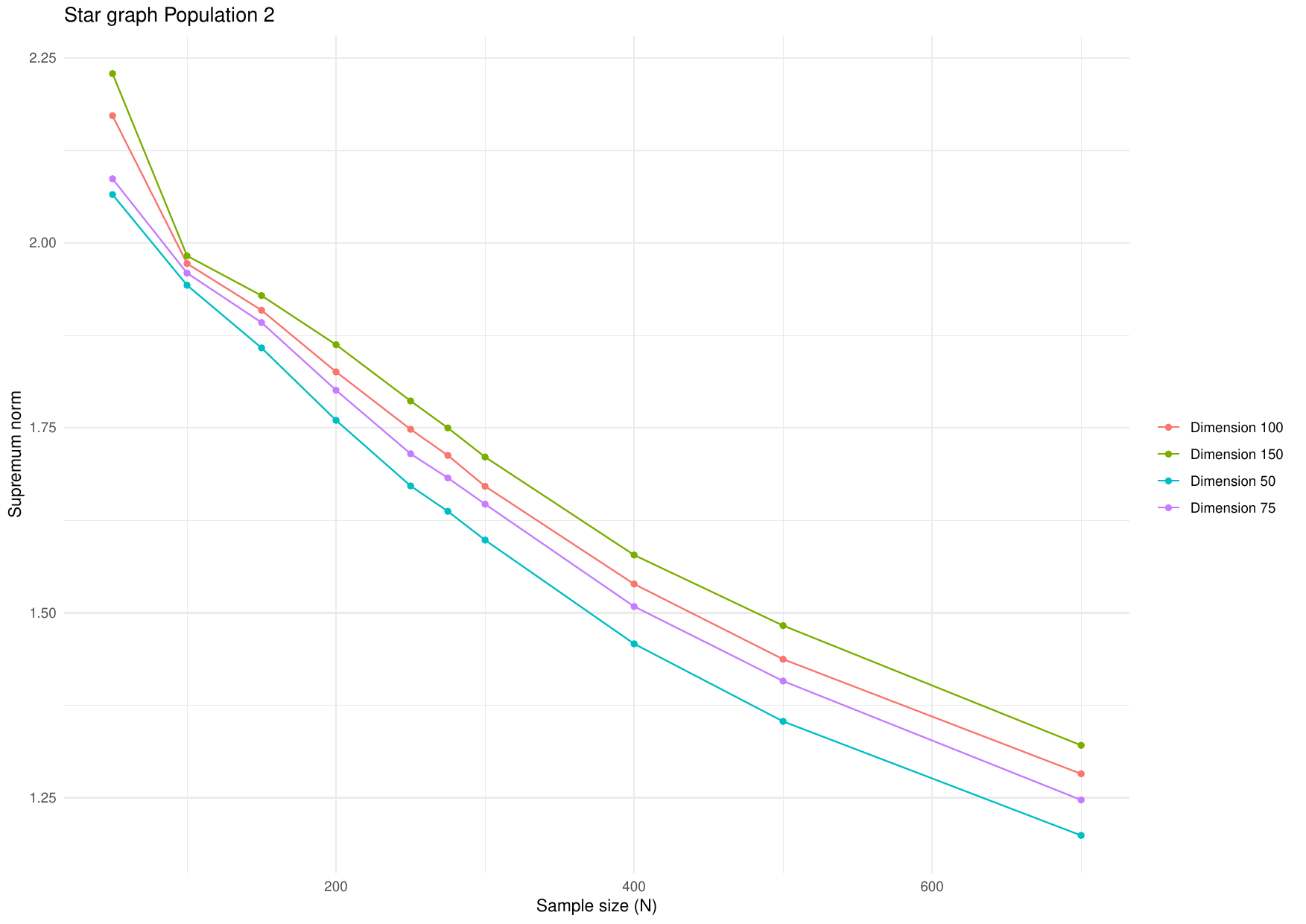}
\label{Figure 7}
\end{figure}
\subsubsection{Asymptotic normality of the test statistic}
Till now, we have dealt with the remainder term of the debiased graphical lasso. In this part, we shall illustrate the asymptotic normality of the debiased graphical lasso estimators for both populations. We fix the sample size at $n=600,$ and report the histogram of each debiased estimator and the test statistic $T_{ij}=\hat{\Omega}_{d,ij}^{1}-\hat{\Omega}_{d,ij}^{2}$. We also superimpose the standard normal curve on the histogram as a reference to how close the asymptotic normality result is. For demonstration, we take $(i,j)=\{(1,1),(1,2),(2,3),(3,4)\}$ as the edges for the chain graphs. The edges for demonstration of the star graphs are mentioned in Table \ref{Table 2}.

\begin{table}
\caption{Dimension and Edges for demonstration of test statistic for Star graphs}

\centering{}%
\begin{tabular}{|c|c|c|c|}
\hline 
$p=50$ & $p=75$ & $p=100$ & $p=150$\tabularnewline
\hline 
\hline 
(1,1) & (1,1) & (1,1) & (1,1)\tabularnewline
\hline 
(1,2) & (1,2) & (1,2) & (1,2)\tabularnewline
\hline 
(31,15) & (31,52) & (7,31) & (14,51)\tabularnewline
\hline 
(8,31) & (72,31) & (31,80) & (82,14)\tabularnewline
\hline 
\end{tabular}
\label{Table 2}
\end{table}

The following figures explain the asymptotic normality of the debiased
estimators.
\begin{figure}[H]
\caption{Asymptotic normality of $\hat{\Omega}_{d,ij}^{1}$ and $\hat{\Omega}_{d,ij}^{2}$
for Dimension 50 (Chain graphs)}

\centering{}\includegraphics[scale=0.26]{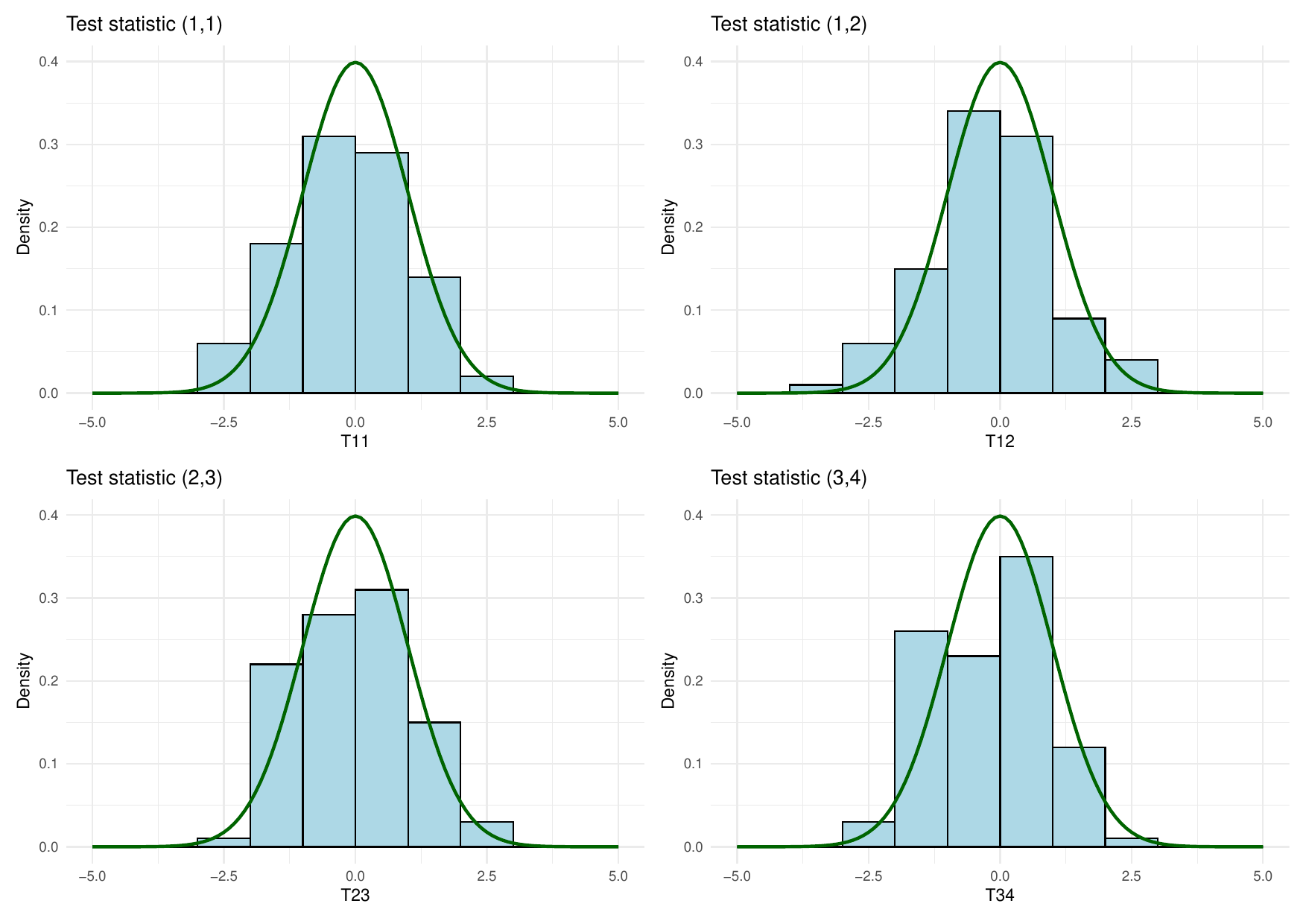}\includegraphics[scale=0.26]{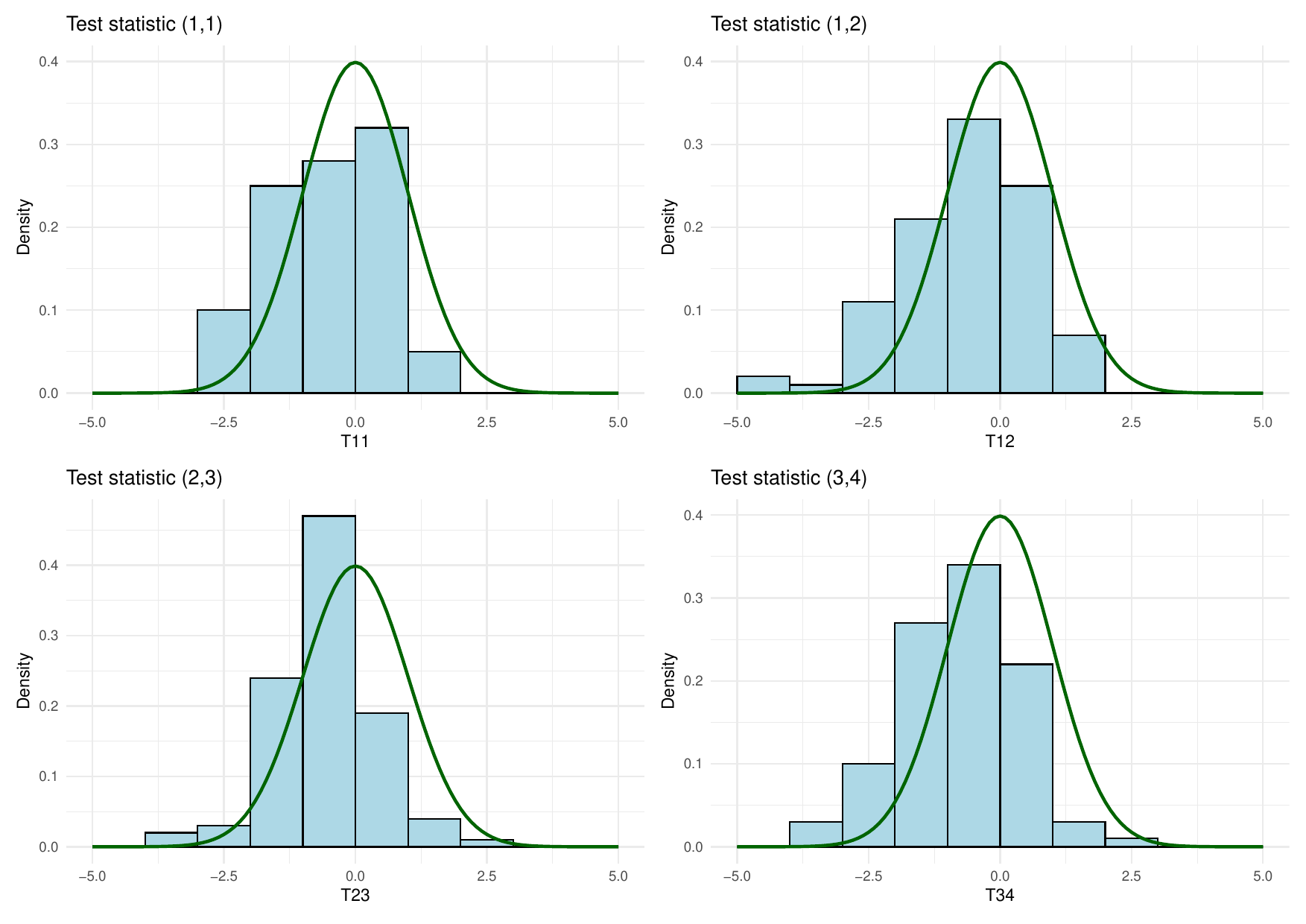}
\label{Figure 8}
\end{figure}

\begin{figure}[H]
\caption{Asymptotic normality of $\hat{\Omega}_{d,ij}^{1}$ and $\hat{\Omega}_{d,ij}^{2}$
for Dimension 75 (Chain graphs)}

\centering{}\includegraphics[scale=0.26]{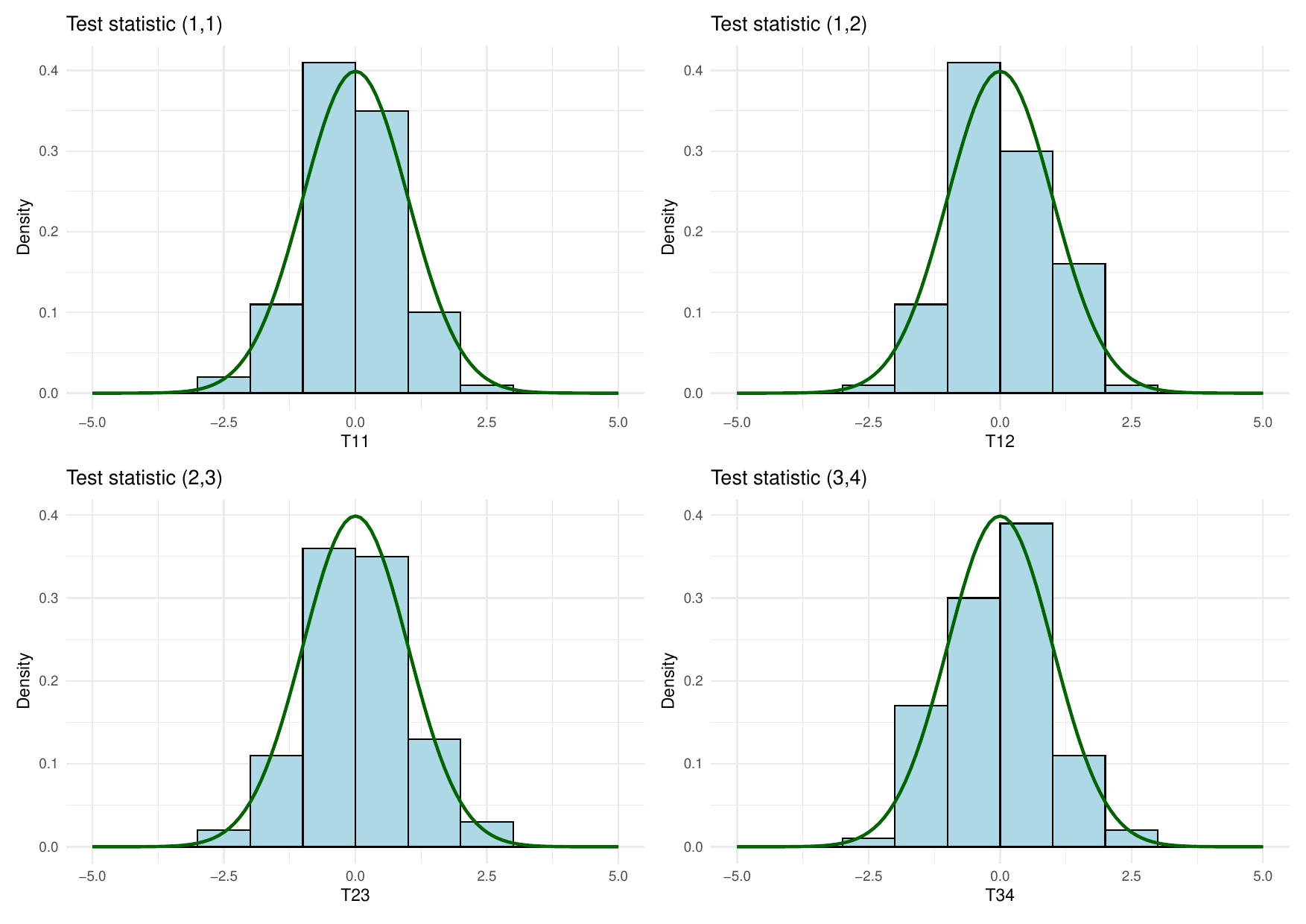}\includegraphics[scale=0.26]{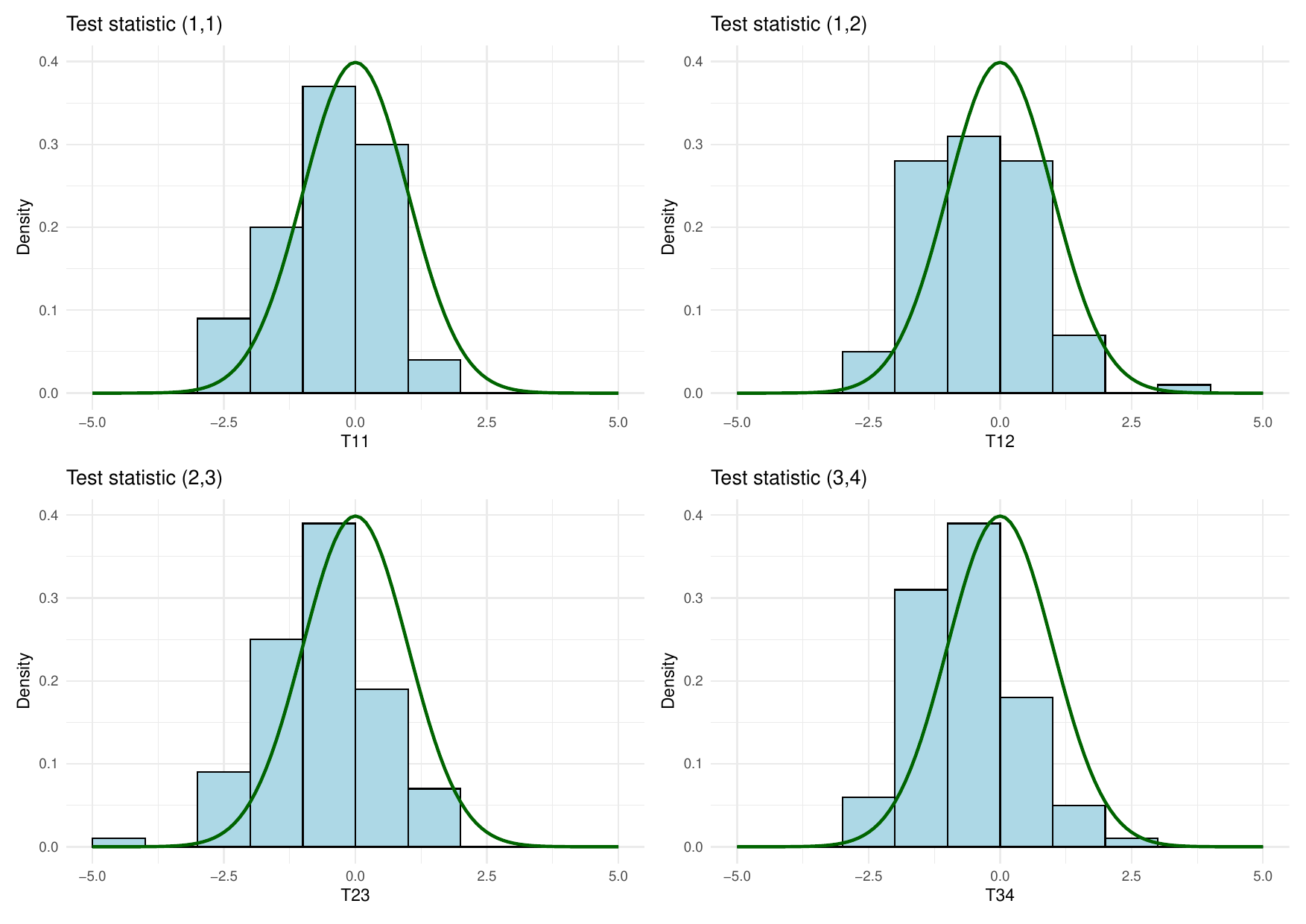}
\label{Figure 9}
\end{figure}

\begin{figure}[H]
\caption{Asymptotic normality of $\hat{\Omega}_{d,ij}^{1}$ and $\hat{\Omega}_{d,ij}^{2}$
for Dimension 100 (Chain graphs)}

\centering{}\includegraphics[scale=0.26]{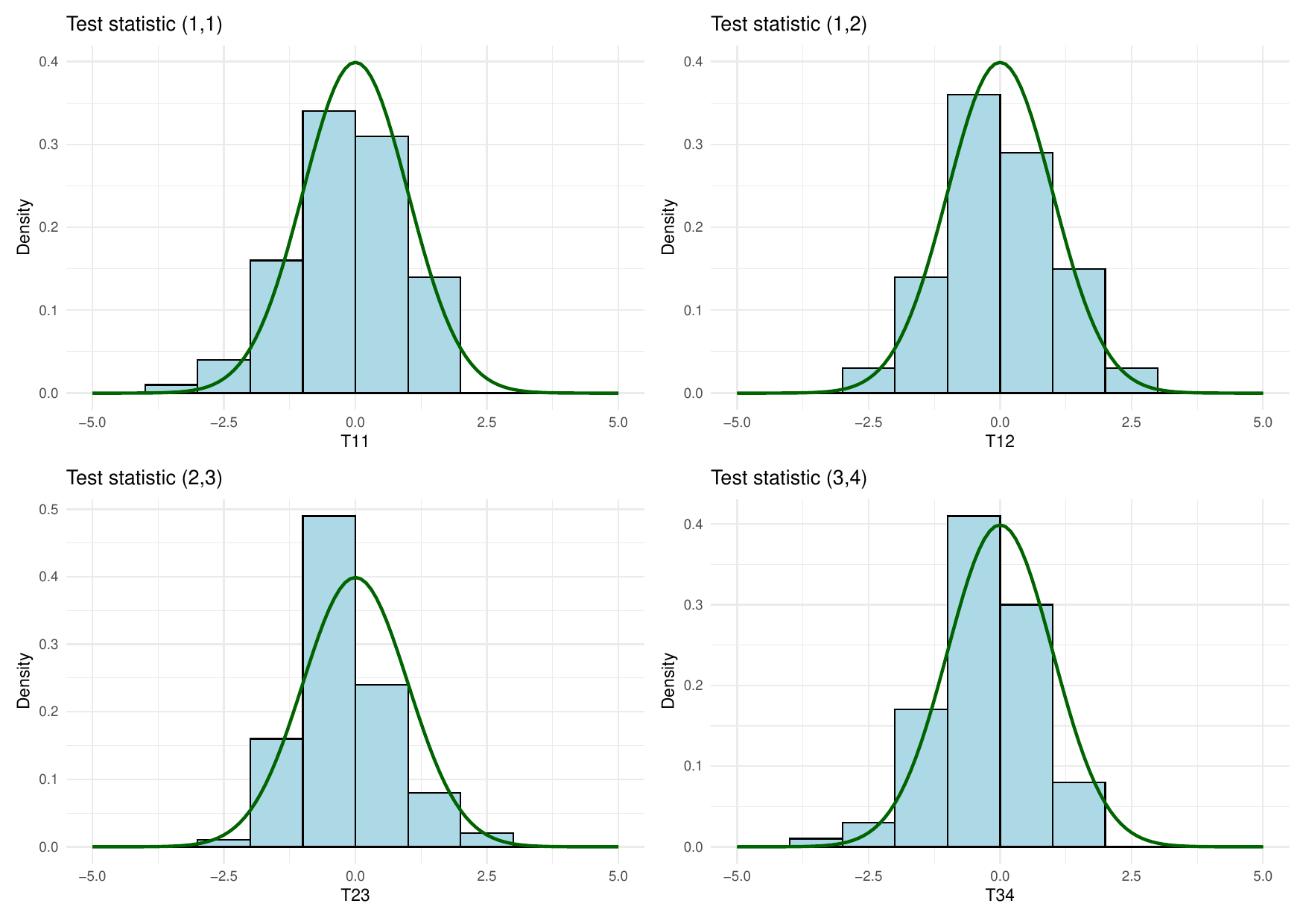}\includegraphics[scale=0.26]{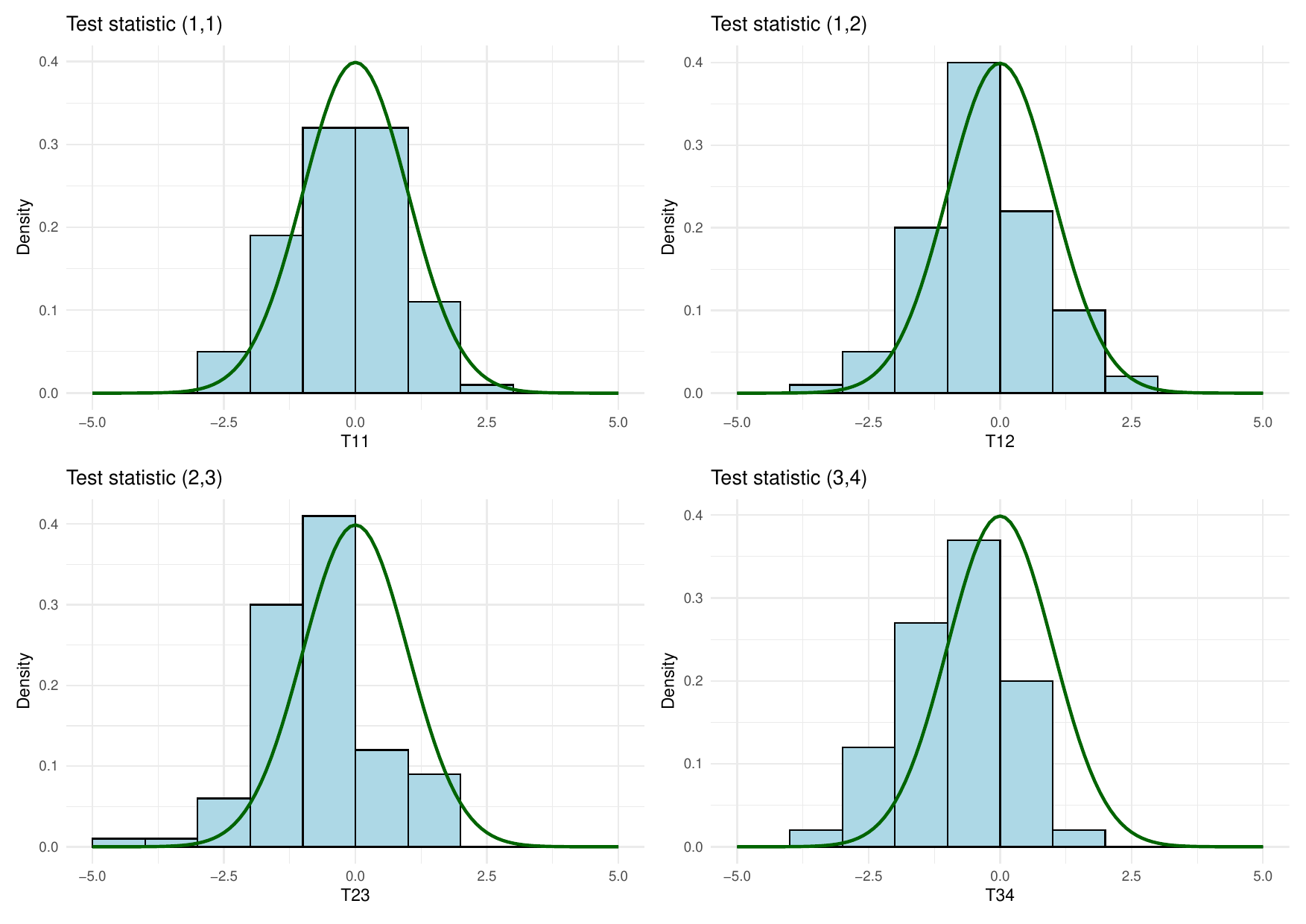}
\label{Figure 10}
\end{figure}

\begin{figure}[H]
\caption{Asymptotic normality of $\hat{\Omega}_{d,ij}^{1}$ and $\hat{\Omega}_{d,ij}^{2}$
for Dimension 150 (Chain graphs)}

\centering{}\includegraphics[scale=0.26]{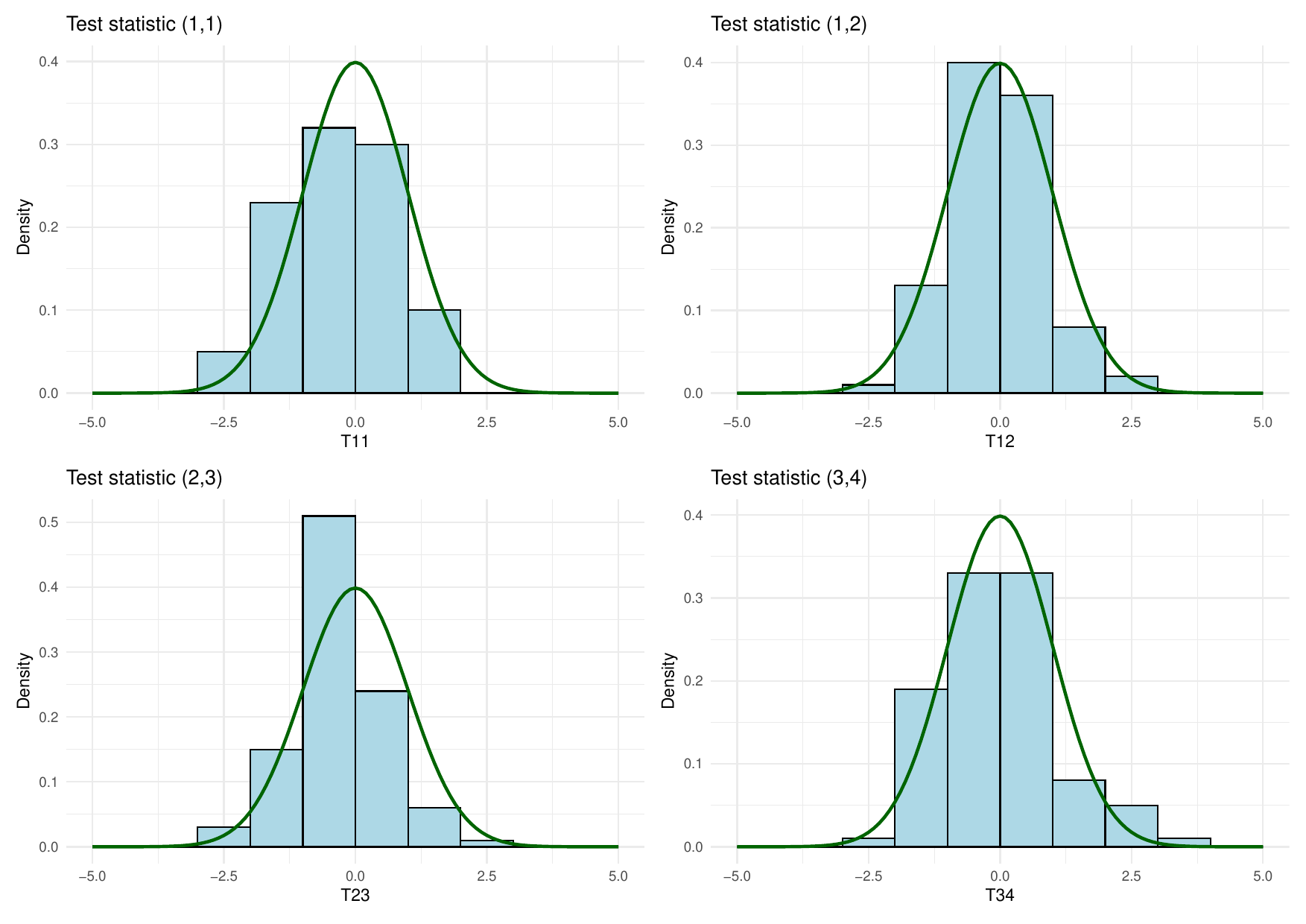}\includegraphics[scale=0.26]{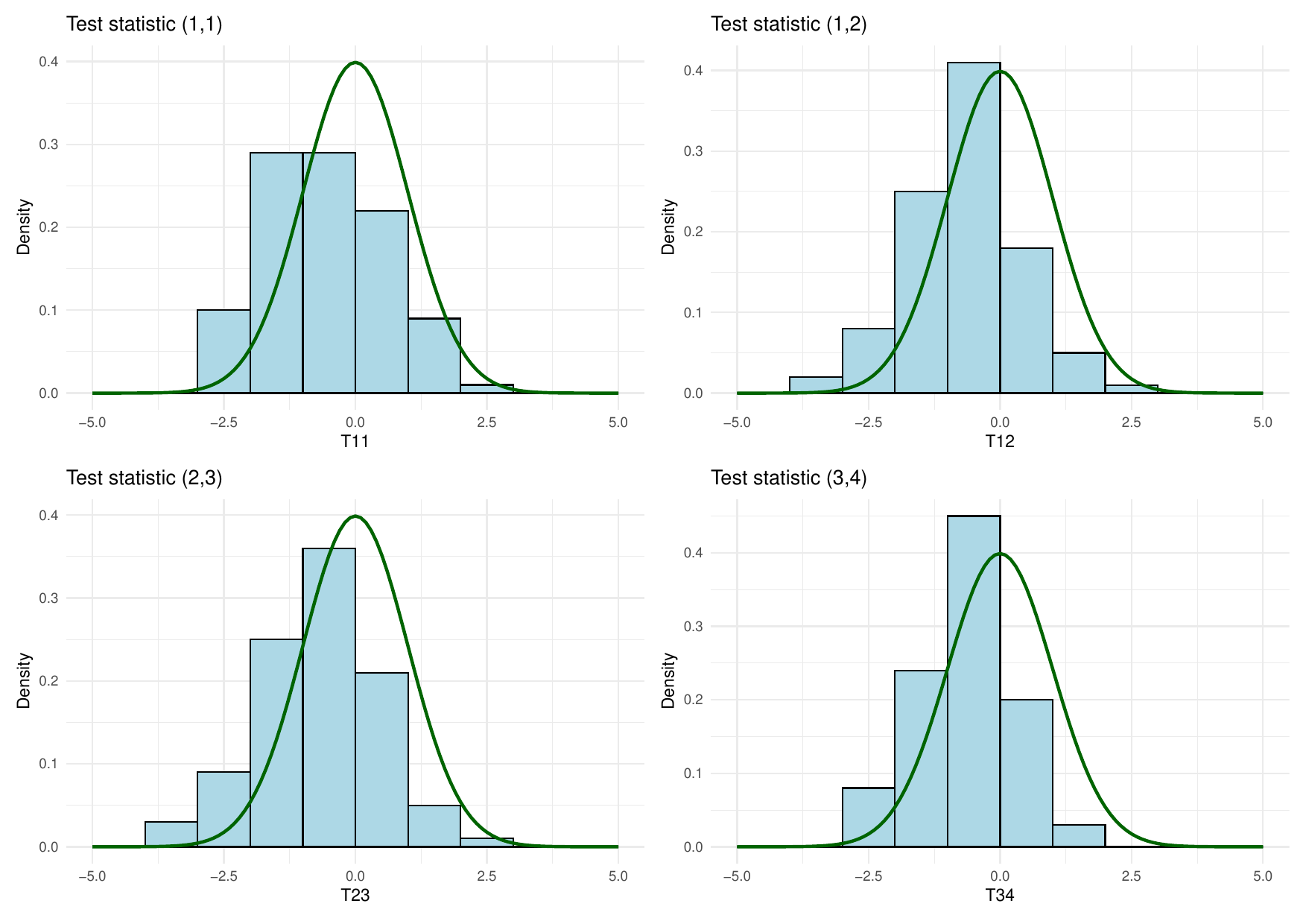}
\label{Figure 11}
\end{figure}

\begin{figure}[H]
\caption{Asymptotic normality of $\hat{\Omega}_{d,ij}^{1}$ and $\hat{\Omega}_{d,ij}^{2}$
for Dimension 50 (Star graphs)}

\centering{}\includegraphics[scale=0.26]{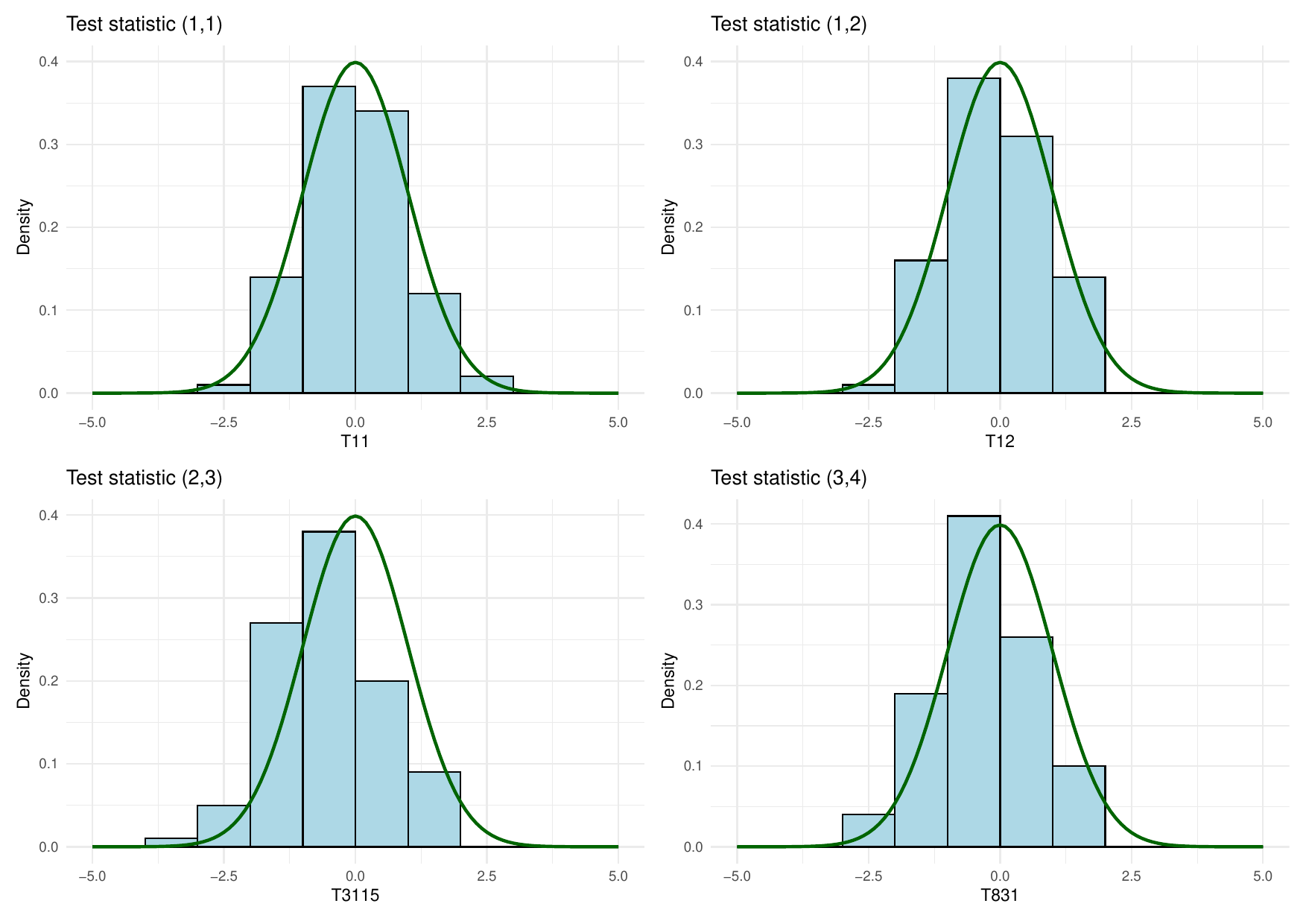}\includegraphics[scale=0.26]{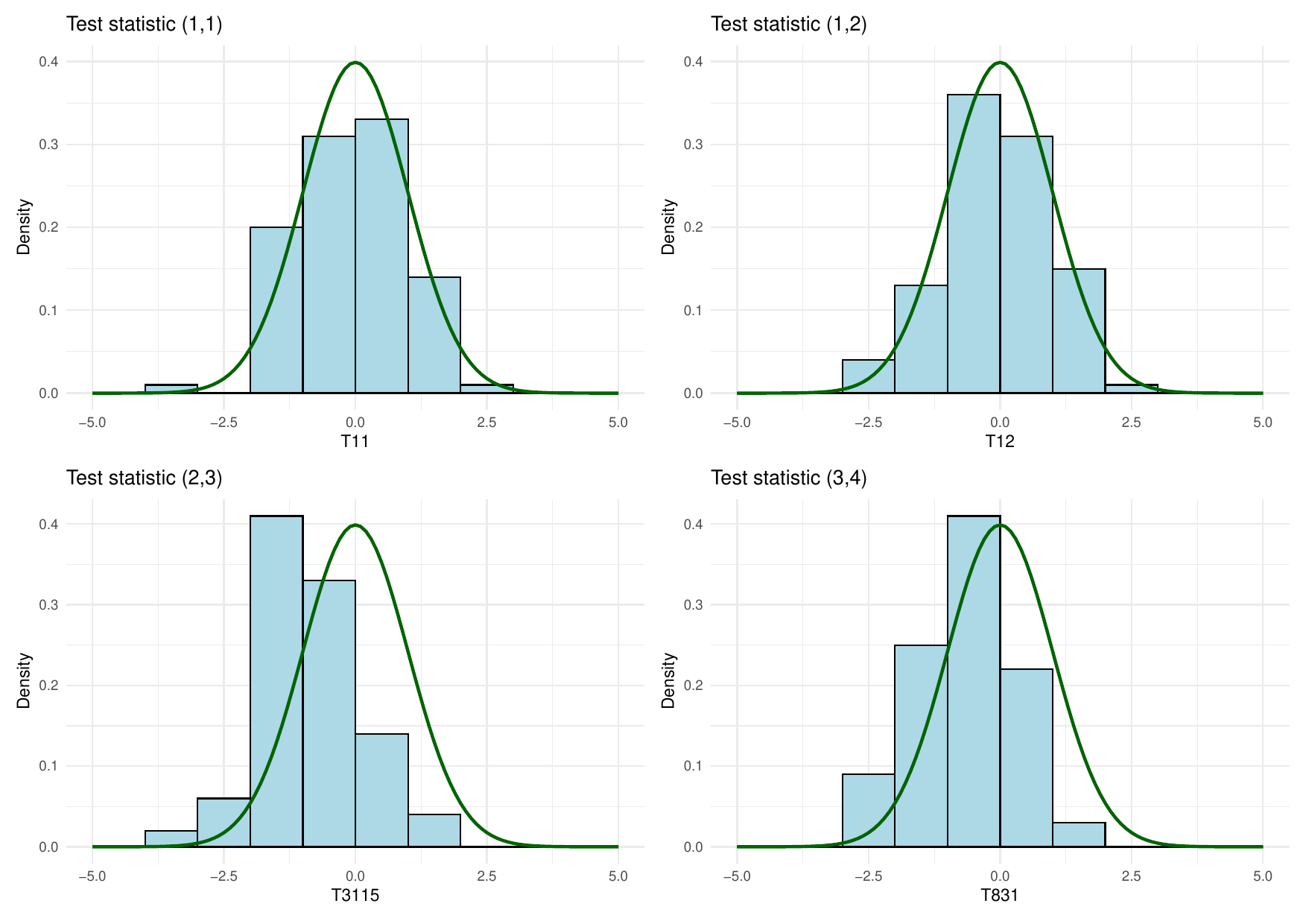}
\label{Figure 12}
\end{figure}

\begin{figure}[H]
\caption{Asymptotic normality of $\hat{\Omega}_{d,ij}^{1}$ and $\hat{\Omega}_{d,ij}^{2}$
for Dimension 75 (Star graphs)}

\centering{}\includegraphics[scale=0.26]{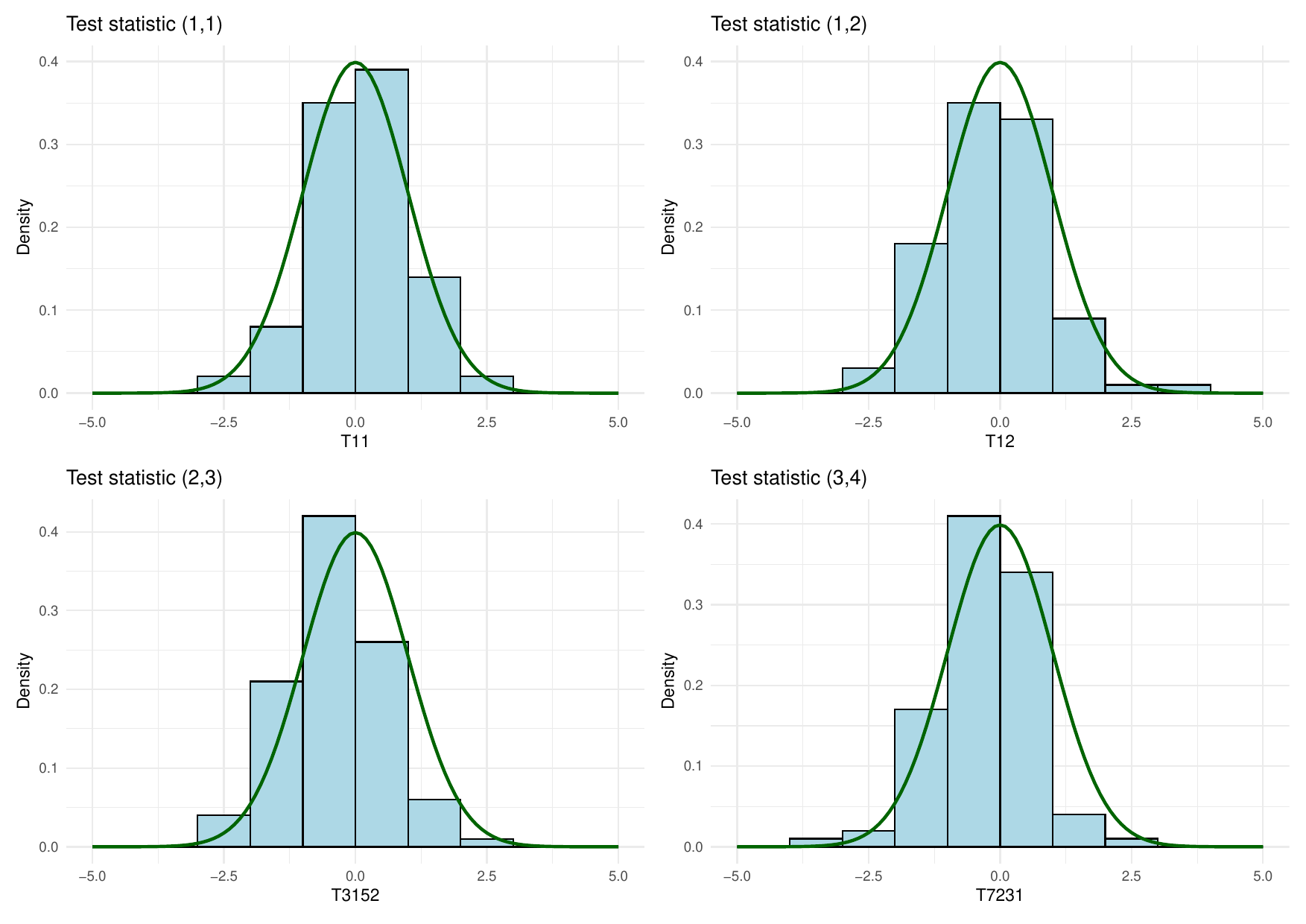}\includegraphics[scale=0.26]{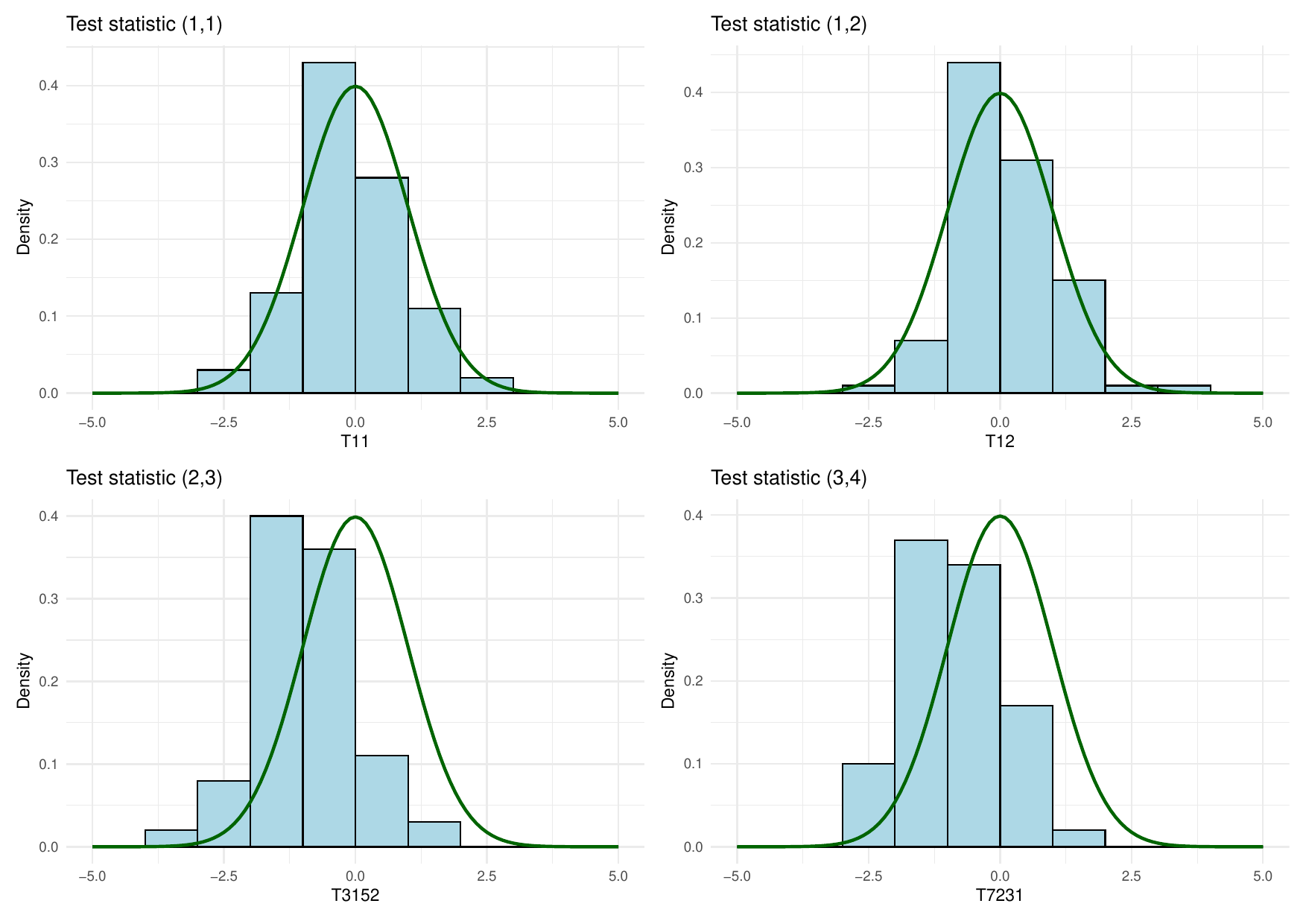}
\label{Figure 13}
\end{figure}

\begin{figure}[H]
\caption{Asymptotic normality of $\hat{\Omega}_{d,ij}^{1}$ and $\hat{\Omega}_{d,ij}^{2}$ for
Dimension 100 (Star graphs)}

\centering{}\includegraphics[scale=0.26]{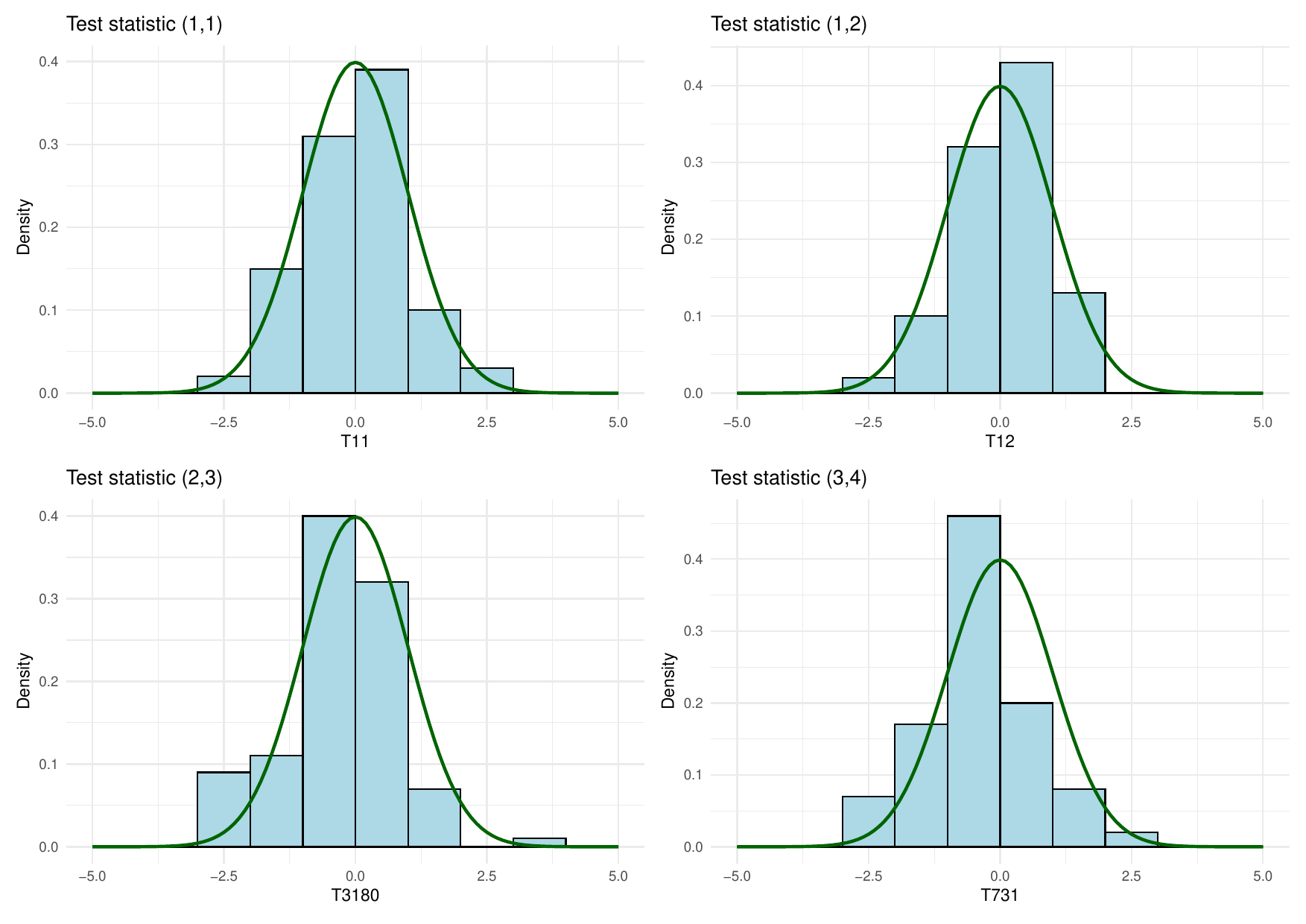}\includegraphics[scale=0.26]{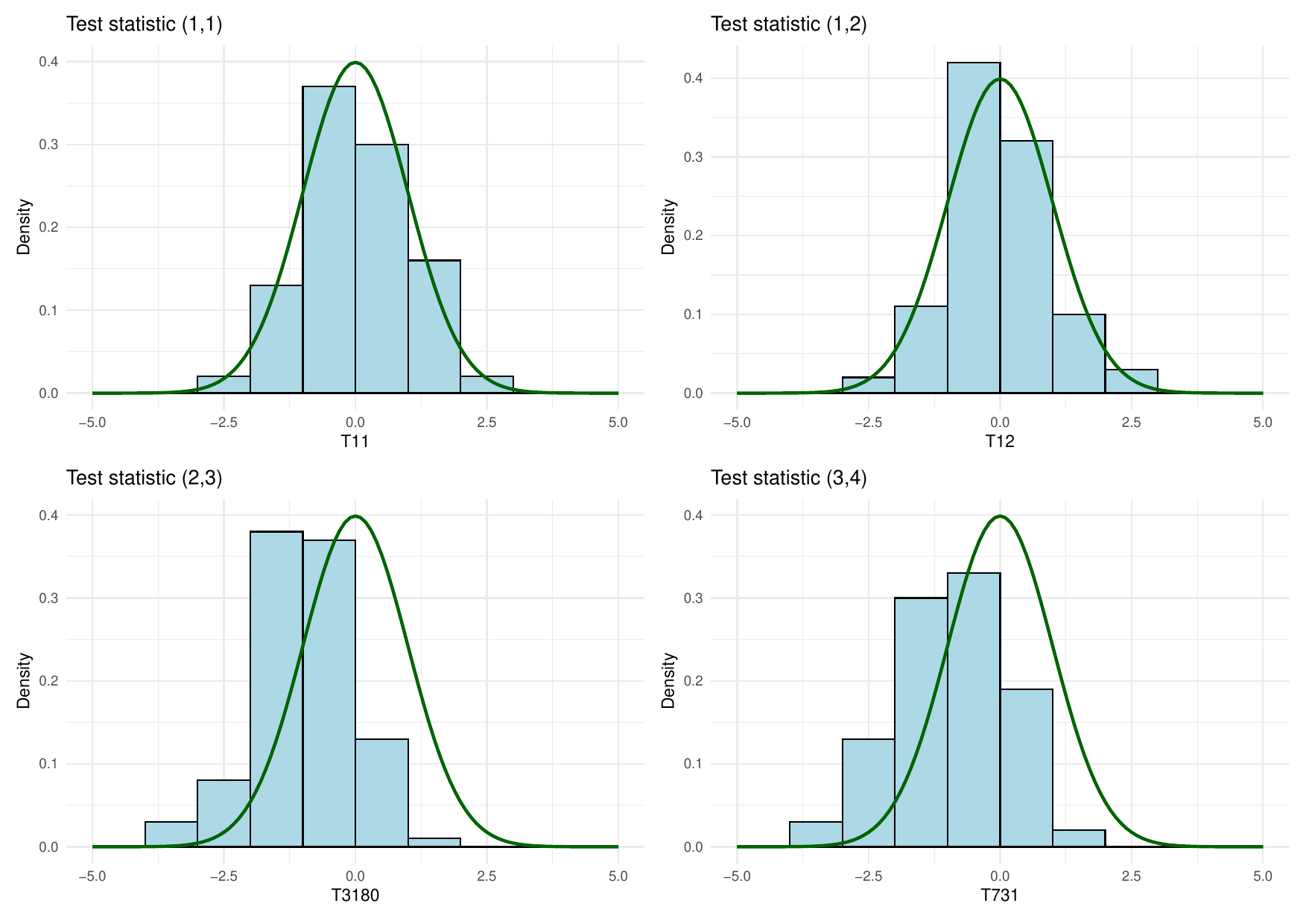}
\label{Figure 14}
\end{figure}

\begin{figure}[H]
\caption{Asymptotic normality of $\hat{\Omega}_{d,ij}^{1}$ and $\hat{\Omega}_{d,ij}^{2}$ for Dimension 150
(Star graphs)}

\centering{}\includegraphics[scale=0.26]{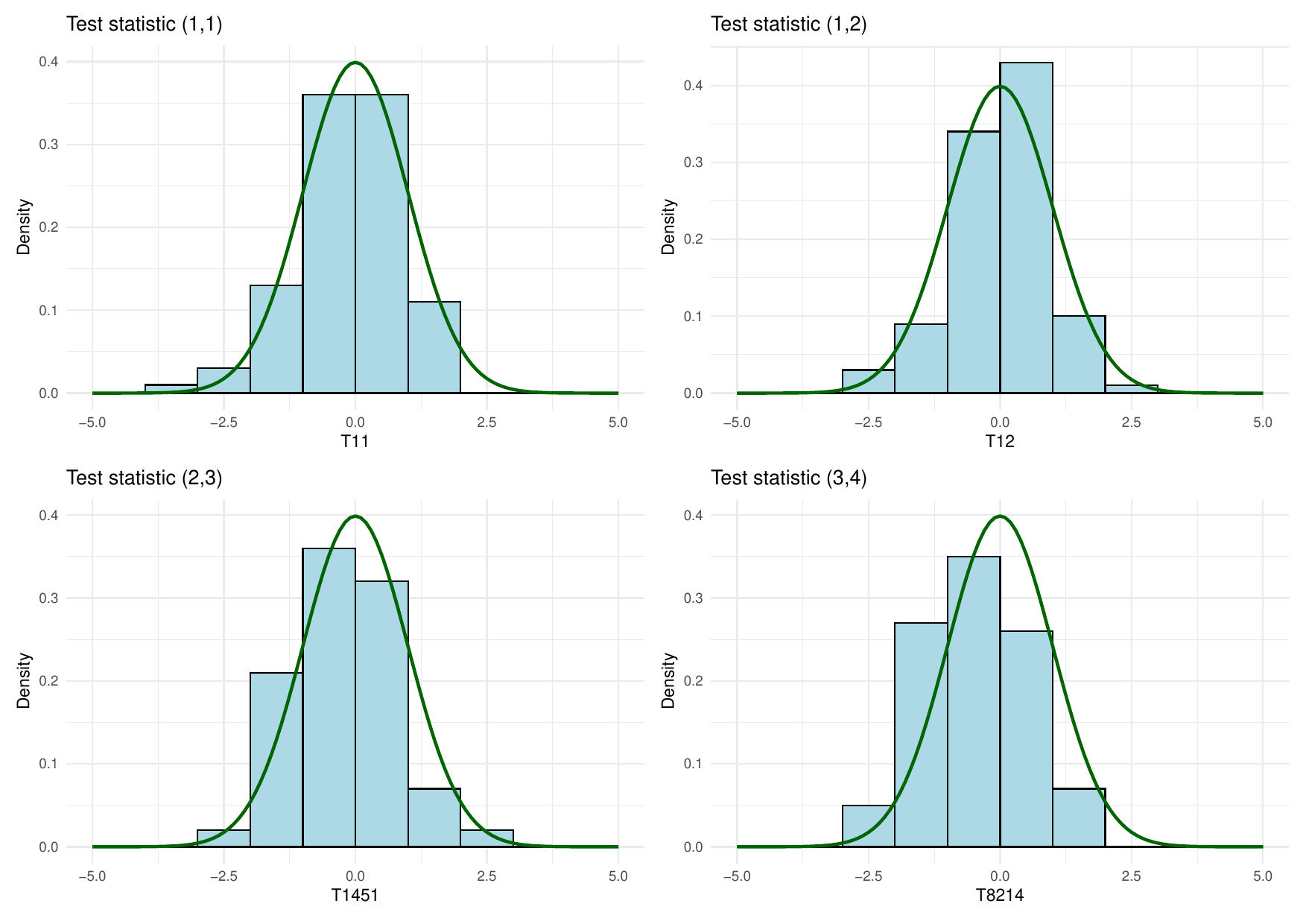}\includegraphics[scale=0.26]{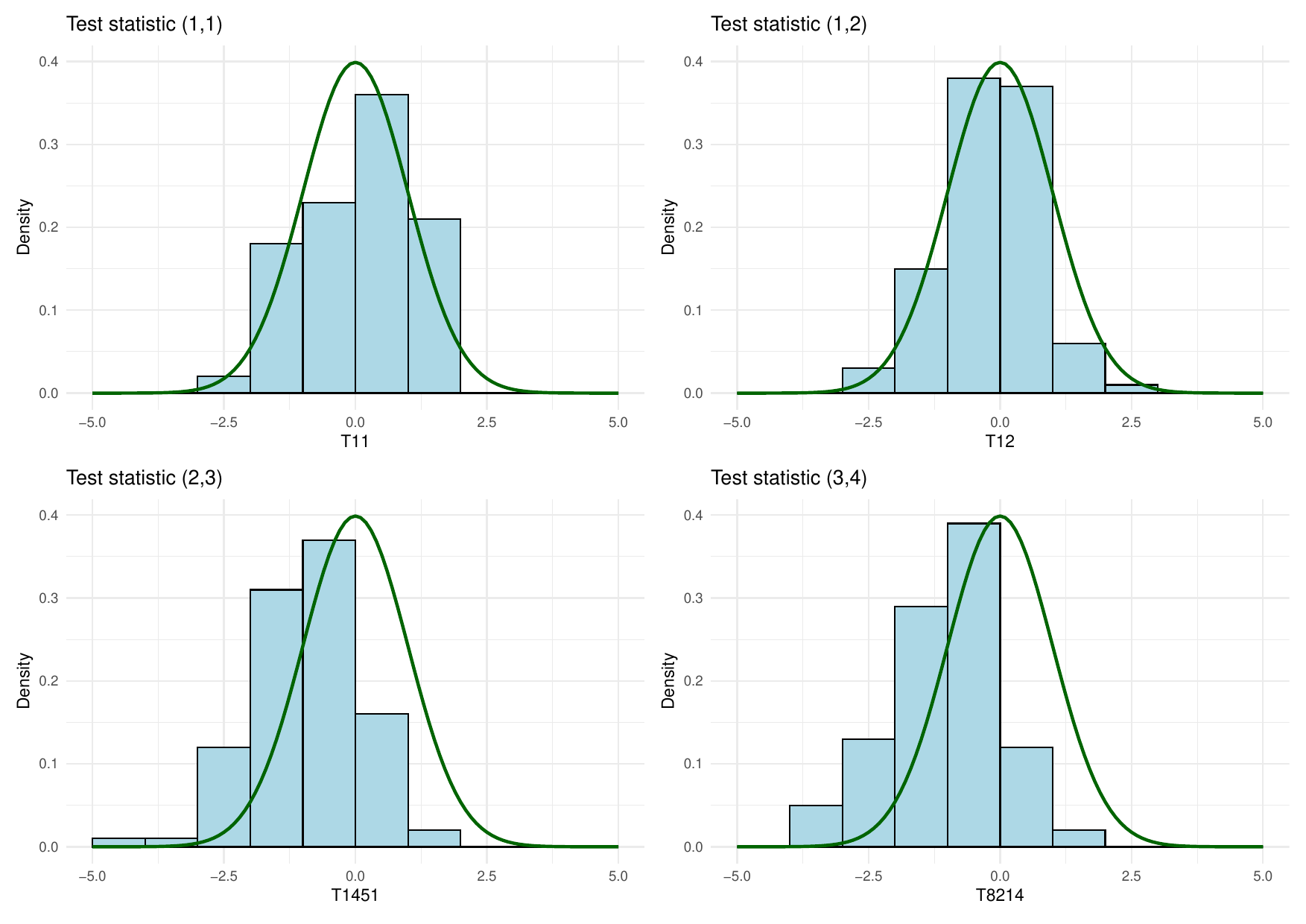}
\label{Figure 15}
\end{figure}

We now empirically demonstrate the asymptotic normality of the test statistic $T_{ij}$.

\begin{figure}[H]
\caption{Asymptotic normality of the test statistic $T_{ij}$ for Dimension
50 (Chain graphs and Star graphs)}

\centering{}\includegraphics[scale=0.26]{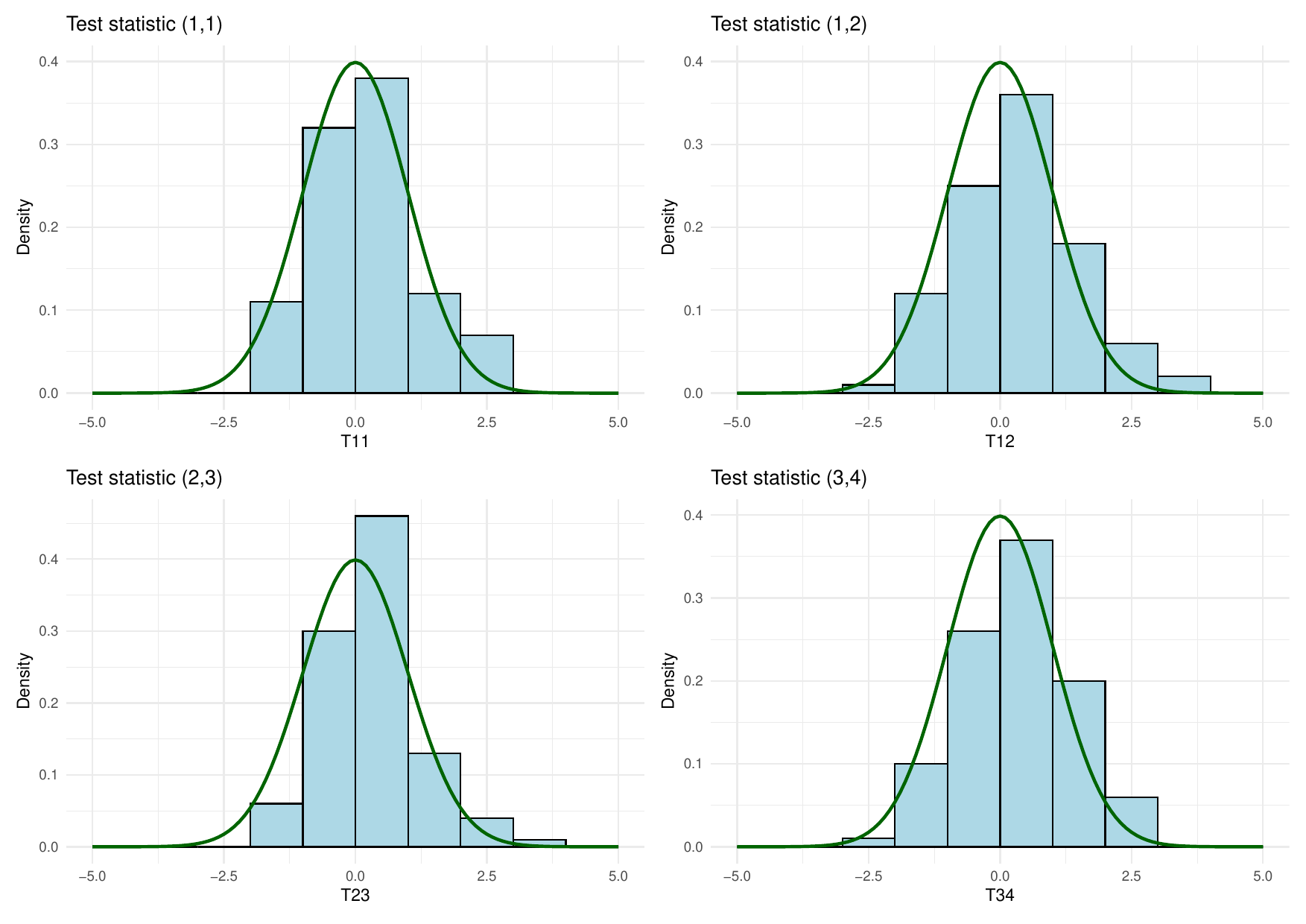}\includegraphics[scale=0.26]{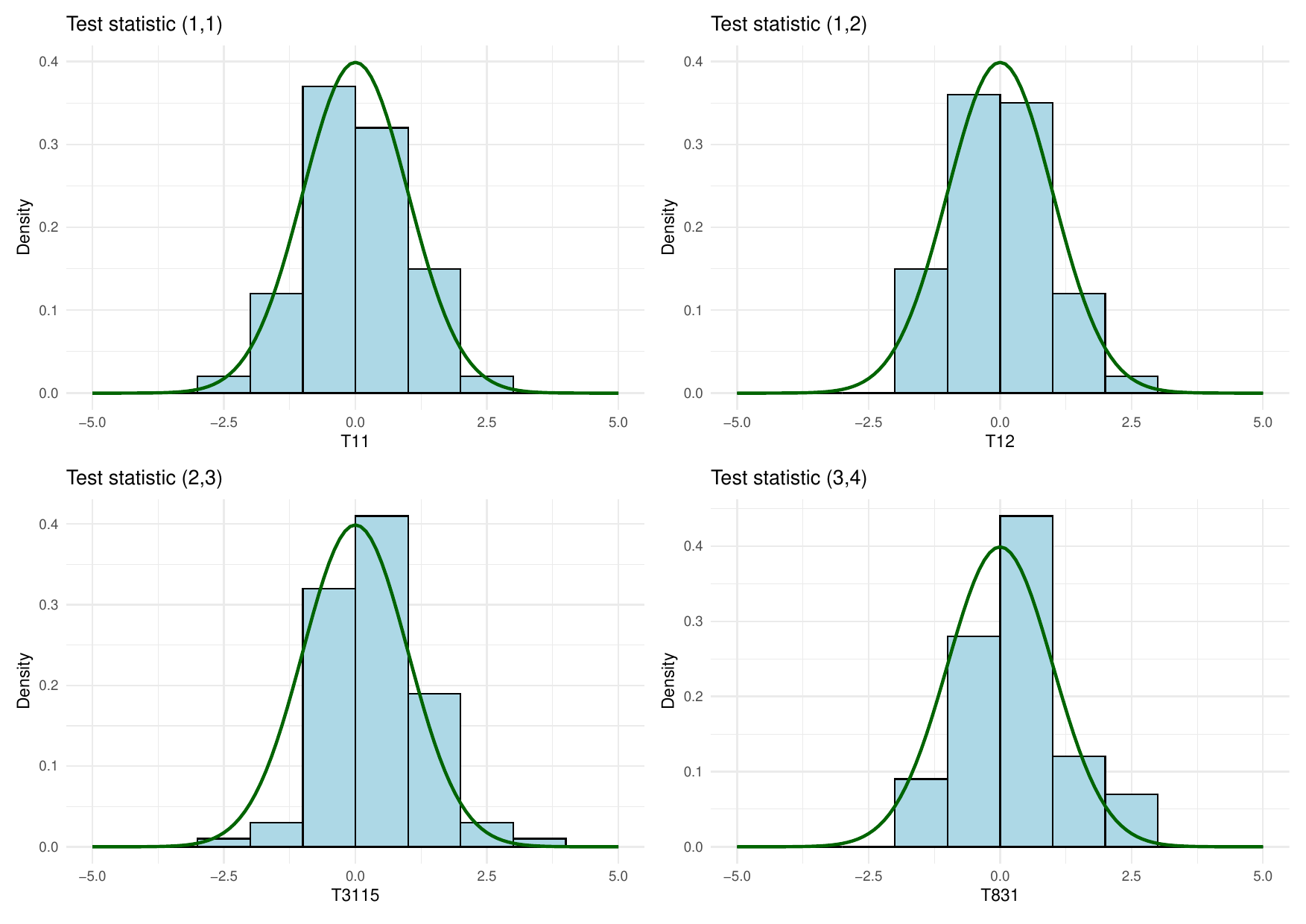}
\label{Figure 16}
\end{figure}

\begin{figure}[H]
\caption{Asymptotic normality of the test statistic $T_{ij}$ for Dimension
75 (Chain graphs and Star graphs)}

\centering{}\includegraphics[scale=0.26]{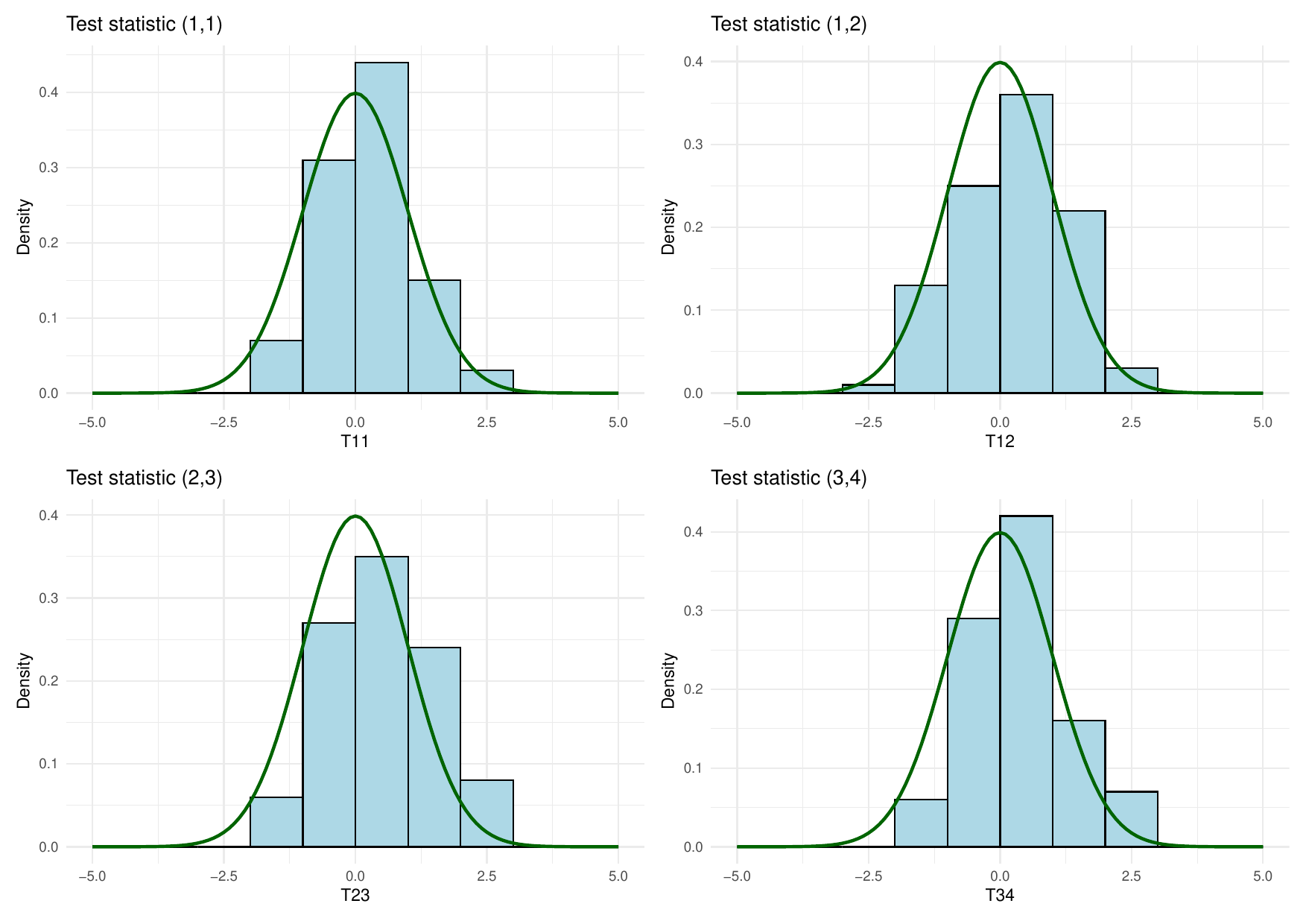}\includegraphics[scale=0.26]{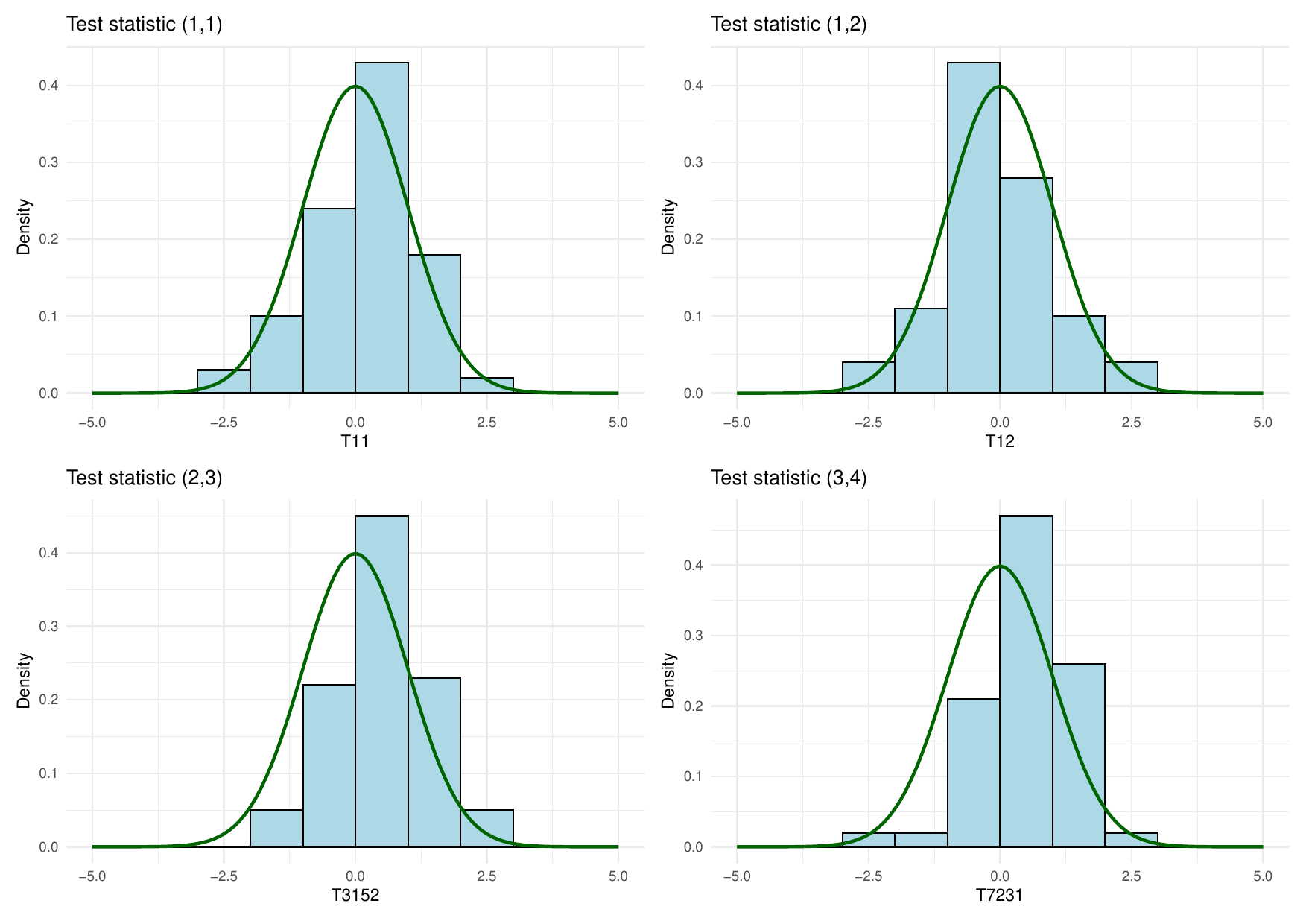}
\label{Figure 17}
\end{figure}

\begin{figure}[H]
\caption{Asymptotic normality of the test statistic $T_{ij}$ for Dimension
100 (Chain graphs and Star graphs)}

\centering{}\includegraphics[scale=0.26]{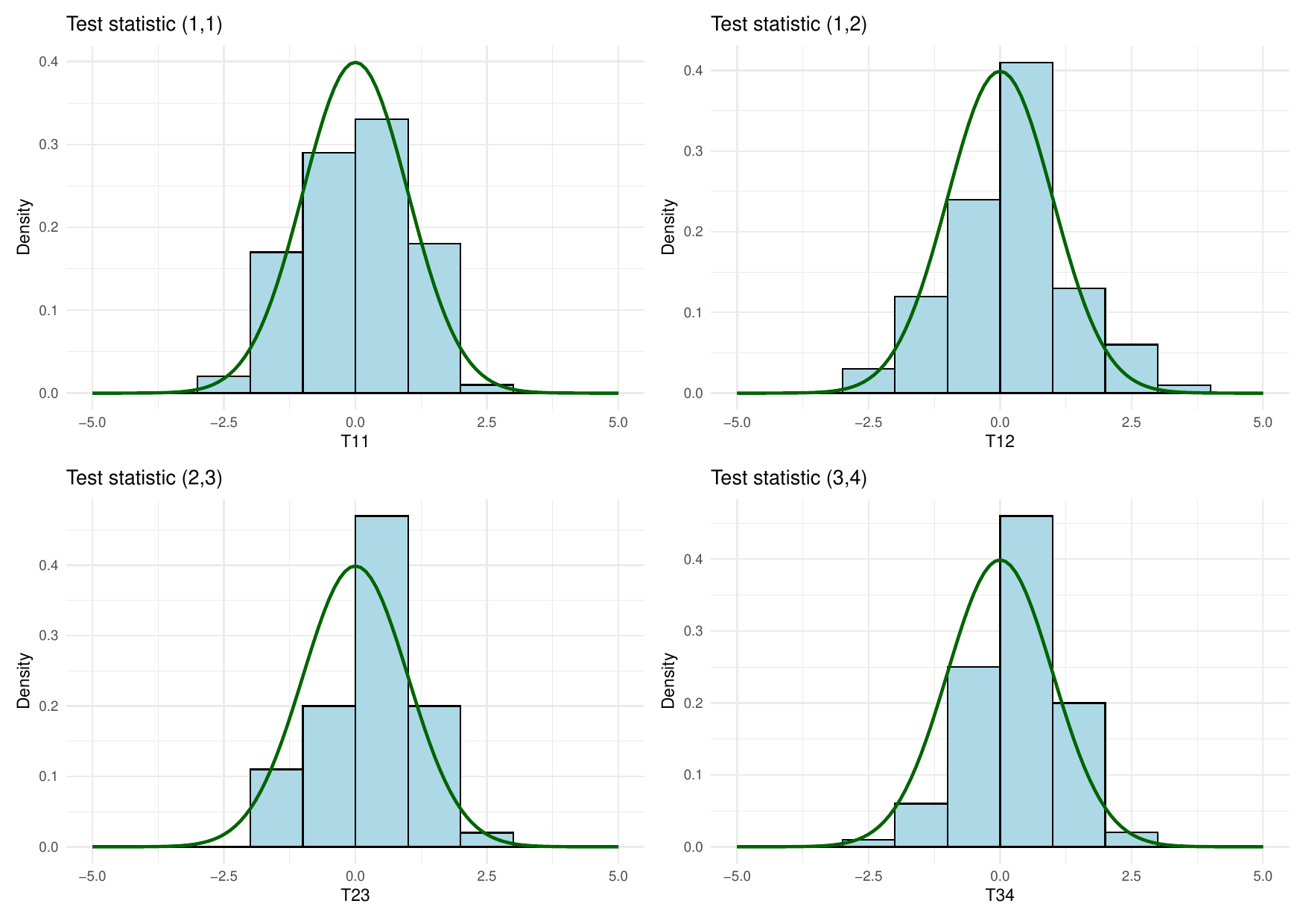}\includegraphics[scale=0.26]{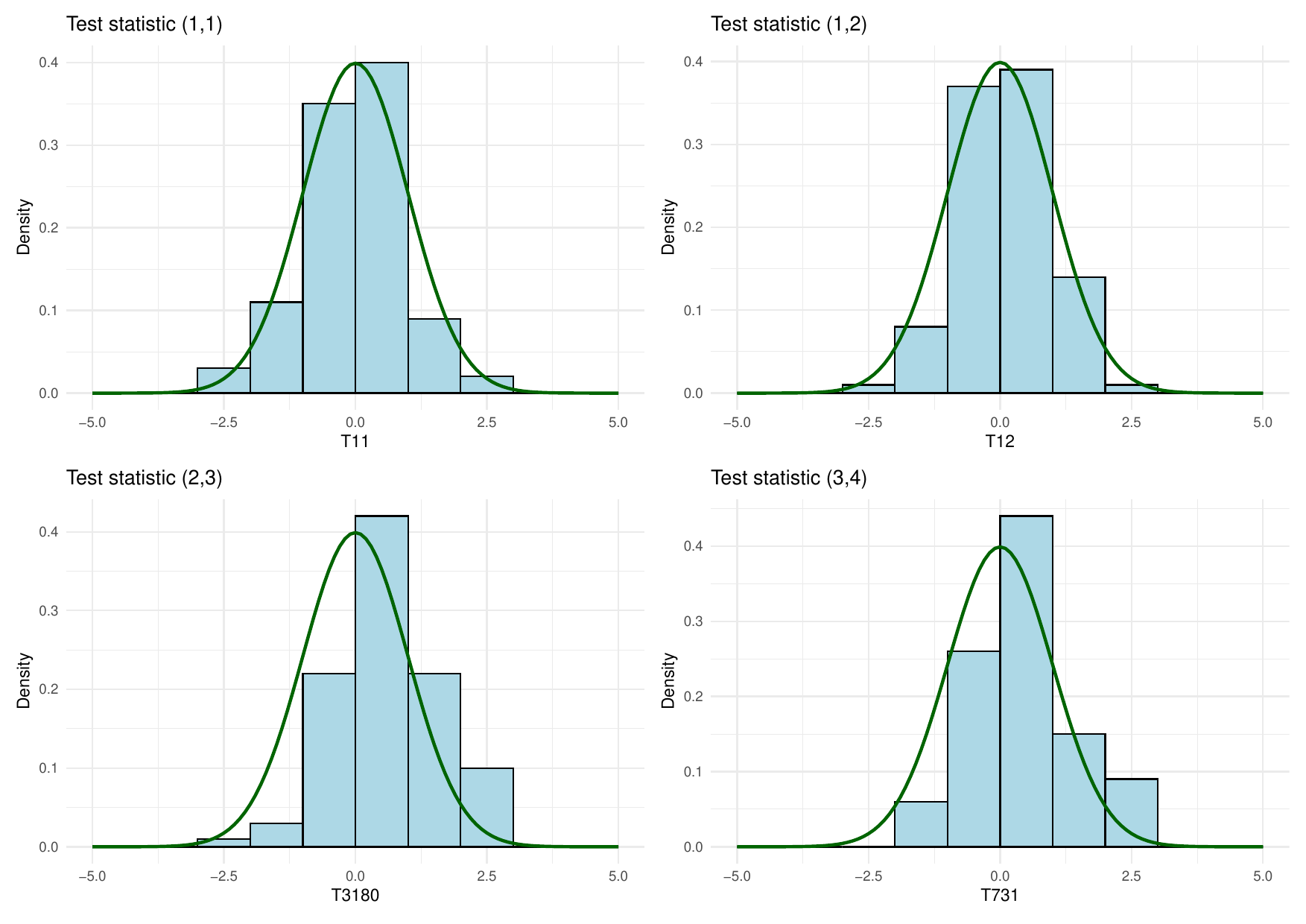}
\label{Figure 18}
\end{figure}

\begin{figure}[H]
\caption{Asymptotic normality of the test statistic $T_{ij}$ for Dimension
150 (Chain graphs and Star graphs)}

\centering{}\includegraphics[scale=0.26]{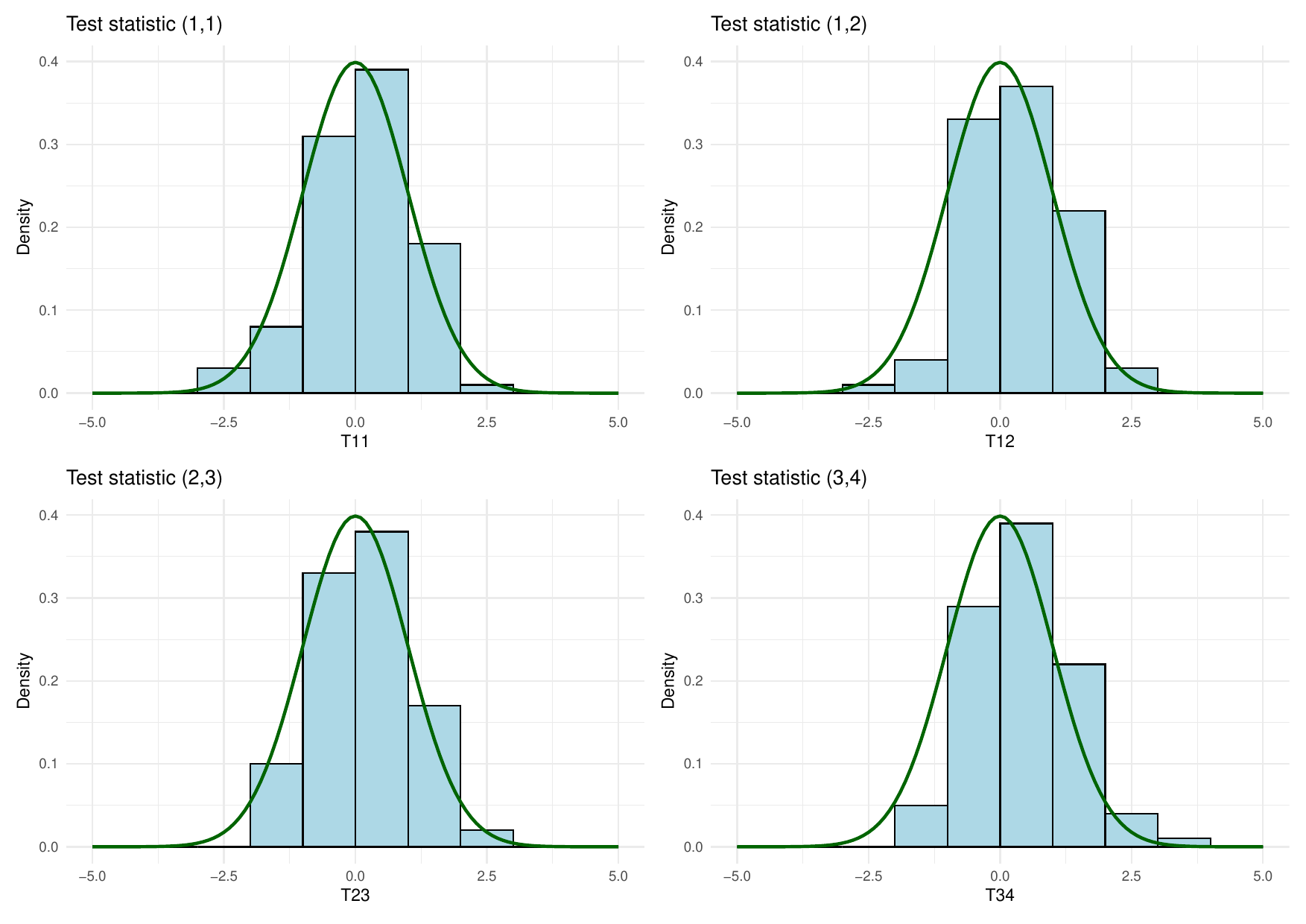}\includegraphics[scale=0.26]{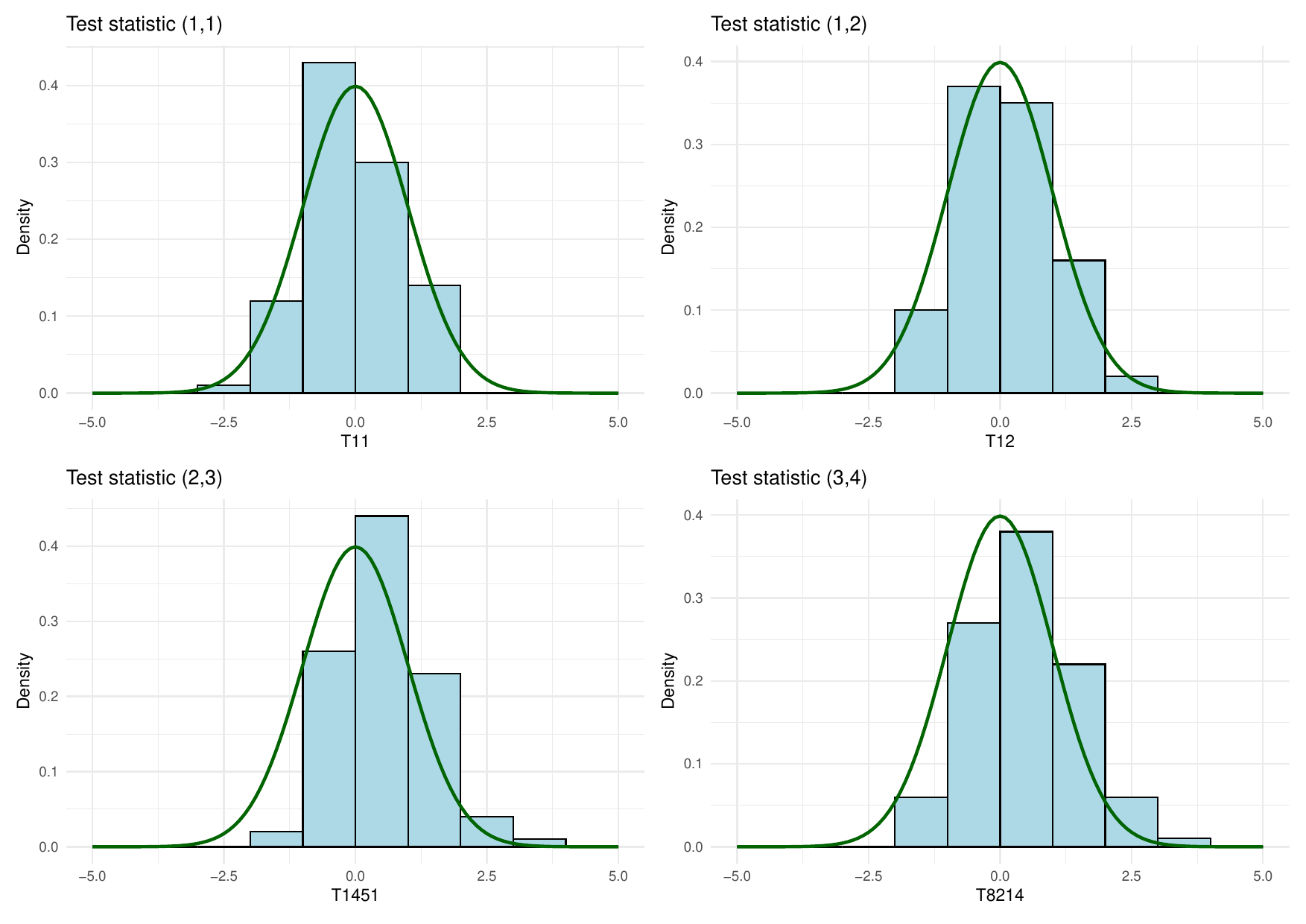}
\label{Figure 19}
\end{figure}
\subsubsection{Average coverage of the true parameter and average length of the
confidence interval}
For a given $(i,j)\in V\times V$, we have the $100(1-\alpha)\%$
confidence interval for $\Omega_{0,ij}^{k}$ is 
\[
I_{ij}^{k}=\left[\hat{\Omega}_{d,ij}^{k}-\frac{\hat{\sigma}_{ij}^{k}}{n_{k}}\tau_{\alpha/2},\hat{\Omega}_{d,ij}^{k}+\frac{\hat{\sigma}_{ij}^{k}}{n_{k}}\tau_{\alpha/2}\right],
\]
where $\tau_{\alpha/2}$ is the $(1-\alpha/2)$ upper quantile of
a standard normal distribution, for $k=1,2$. We define $\hat{\alpha}_{ij}^{k}$
as the proportion of the confidence intervals containing the true
parameter, i.e., 
\[
\alpha_{ij}^{k}=\frac{1}{B}\sum_{b=1}^{B}\mathbb{I}\left(\Omega_{0,ij}^{k}\in\left[\hat{\Omega}_{d,ij,b}^{k}-\frac{\hat{\sigma}_{ij,b}^{k}}{n_{k}}\tau_{\alpha/2},\hat{\Omega}_{d,ij,b}^{k}+\frac{\hat{\sigma}_{ij,b}^{k}}{n_{k}}\tau_{\alpha/2}\right]\right),
\]
where $\hat{\Omega}_{d,ij,b}^{k}$ is the $(i,j)^{th}$ element of
the debiased group graphical lasso estimator for population $k=1,2$
for the $b^{th}$ sample, \textbf{$b=1,2,\ldots,B$}. Finally, we define
the average coverage for population $k$ over any set $A\subset V\times V$
as 
\[
\Gamma_{A}^{k}=\frac{1}{|A|}\sum_{(i,j)\in A}\alpha_{ij}^{k}.
\]
Thus, for set $S$ and $S^{c}$, we have the average coverage for
each population $k$ as $\Gamma_{S}^{k}$ and $\Gamma_{S^{c}}^{k}$,
respectively. We also define $l_{ij}^{k}$ as the average length of
the confidence interval for the $(i,j)^{th}$ element, and hence,
define the average length over any set $A$ as 
\[
L_{A}^{k}=\frac{1}{|A|}\sum_{(i,j)\in A}l_{ij}^{k}.
\]
Hence, we also have the average length over $S$ and $S^{c}$ for
population $k=1,2$. The result is given in Table \ref{Table 3}.

\begin{table}
\caption{Average coverage and average length}

\centering{}%
\begin{tabular}{|c|c|c|c|c|c|c|}
\hline 
\multirow{2}{*}{} & \multirow{2}{*}{Graph} & \multirow{2}{*}{Dimension} & \multicolumn{2}{c|}{$S$} & \multicolumn{2}{c|}{$S^{c}$}\tabularnewline
\cline{4-7} \cline{5-7} \cline{6-7} \cline{7-7} 
 &  &  & $k=1$ & $k=2$ & $k=1$ & $k=2$\tabularnewline
\hline 
\multirow{8}{*}{Average coverage} & \multirow{4}{*}{Chain graph} & $p=50$ & 0.9539 & 0.9029 & 0.9584 & 0.9758\tabularnewline
\cline{3-7} \cline{4-7} \cline{5-7} \cline{6-7} \cline{7-7} 
 &  & $p=75$ & 0.9484 & 0.8916 & 0.9587 & 0.9761\tabularnewline
\cline{3-7} \cline{4-7} \cline{5-7} \cline{6-7} \cline{7-7} 
 &  & $p=100$ & 0.9515 & 0.8896 & 0.9597 & 0.9771\tabularnewline
\cline{3-7} \cline{4-7} \cline{5-7} \cline{6-7} \cline{7-7} 
 &  & $p=150$ & 0.9496 & 0.8843 & 0.9593 & 0.9774\tabularnewline
\cline{2-7} \cline{3-7} \cline{4-7} \cline{5-7} \cline{6-7} \cline{7-7} 
 & \multirow{4}{*}{Star graph} & $p=50$ & 0.9392 & 0.9040 & 0.9525 & 0.9549\tabularnewline
\cline{3-7} \cline{4-7} \cline{5-7} \cline{6-7} \cline{7-7} 
 &  & $p=75$ & 0.9384 & 0.8656 & 0.9513 & 0.9535\tabularnewline
\cline{3-7} \cline{4-7} \cline{5-7} \cline{6-7} \cline{7-7} 
 &  & $p=100$ & 0.9352 & 0.8524 & 0.9515 & 0.9532\tabularnewline
\cline{3-7} \cline{4-7} \cline{5-7} \cline{6-7} \cline{7-7} 
 &  & $p=150$ & 0.942 & 0.8332 & 0.9508 & 0.952\tabularnewline
\hline 
\multirow{8}{*}{Average length} & \multirow{4}{*}{Chain graph} & $p=50$ & 0.1423 & 0.1076 & 0.1444 & 0.1162\tabularnewline
\cline{3-7} \cline{4-7} \cline{5-7} \cline{6-7} \cline{7-7} 
 &  & $p=75$ & 0.142 & 0.1071 & 0.144 & 0.1153\tabularnewline
\cline{3-7} \cline{4-7} \cline{5-7} \cline{6-7} \cline{7-7} 
 &  & $p=100$ & 0.1416 & 0.1062 & 0.1436 & 0.1142\tabularnewline
\cline{3-7} \cline{4-7} \cline{5-7} \cline{6-7} \cline{7-7} 
 &  & $p=150$ & 0.1414 & 0.1051 & 0.1433 & 0.1126\tabularnewline
\cline{2-7} \cline{3-7} \cline{4-7} \cline{5-7} \cline{6-7} \cline{7-7} 
 & \multirow{4}{*}{Star graph} & $p=50$ & 0.4493 & 0.6361 & 0.6256 & 0.9473\tabularnewline
\cline{3-7} \cline{4-7} \cline{5-7} \cline{6-7} \cline{7-7} 
 &  & $p=75$ & 0.4395 & 0.609 & 0.6328 & 0.9632\tabularnewline
\cline{3-7} \cline{4-7} \cline{5-7} \cline{6-7} \cline{7-7} 
 &  & $p=100$ & 0.4362 & 0.6354 & 0.6354 & 0.972\tabularnewline
\cline{3-7} \cline{4-7} \cline{5-7} \cline{6-7} \cline{7-7} 
 &  & $p=150$ & 0.4216 & 0.5778 & 0.6386 & 0.9848\tabularnewline
\hline 
\end{tabular}
\label{Table 3}
\end{table}
\subsection{Real-world dataset}\label{Subsection 5.2}
\subsubsection{Climatic data}
The main motivation for using group graphical lasso is to be used in estimating the precision matrices of multiple populations when there is a common pattern in the sparsity of the precision matrices of the populations. Such cases in real life can occur when we have a dataset corresponding to variables across different adjacent districts. As an application of this method, we work with a dataset where we have observations on various climatic and greenhouse gas measurements over two different cities, namely Oklahoma and Colorado. These cities lie in the same climatic belt and hence can be expected to have homogeneity in the sparsity pattern of the precision matrices. We gather information on 16 variables, namely Carbon monoxide (CO), Carbon dioxide (CO\textsubscript{2}), Methane (CH\textsubscript{4}), Nitrous Oxide (N\textsubscript{2}O), Hydrogen (H\textsubscript{2}), and Sulfur hexafluoride (SF\textsubscript{6}), along with climatic variables such as maximum temperature (MAT in $^{\circ}$F), average temperature (AVT in $^{\circ}$F), minimum temperature (MIT in $^{\circ}$F), dew point (DEP in $^{\circ}$F), precipitation (PER in inches), wind speed (WIS in mph), gust wind speed (GWS in mph), sea level pressure (SLP in inches), snowfall (SNF in inches), and the number of rainfall days (RFD). The data for greenhouse gases are gathered from \href{https://gml.noaa.gov/data/data.php}{Global Monitoring Laboratory}. The data for PER, SNF, and RFD are obtained from \href{https://www.weather.gov/wrh/Climate?wfo=oun\%20}{National Weather Service}, while the data for the rest of the variables are obtained from \href{https://www.wunderground.com/}{Weather Underground}.

We gather data for 120 months from January (2010) to December (2019). The main reason why we did not take any more data beyond December 2019 is due to the arrival of the global lockdown for COVID-19. During the lockdown, there were changes in the climatic variations due to a decrease in pollution, and hence, they were not suitable for analysis in this context. To remove the seasonal effects and make the observations independent, we take 3 3-month averages and then take first-order differencing. Then we normalize the data. Hence, our dataset has 39 observations over 16 variables from two different populations, Oklahoma and Colorado.

We do not know the real precision matrices of the populations. However, we used the whole data and estimated the precision matrices of the populations to be used as a proxy for the real precision matrices. We use the group graphical lasso parameters to be proportional to $\sqrt{\frac{log\ p}{n}}$. The common network for both populations is given in Figure \ref{Figure 20}.
\begin{figure}[t]
\caption{Predicted common network}

\begin{centering}
\includegraphics[scale=0.4]{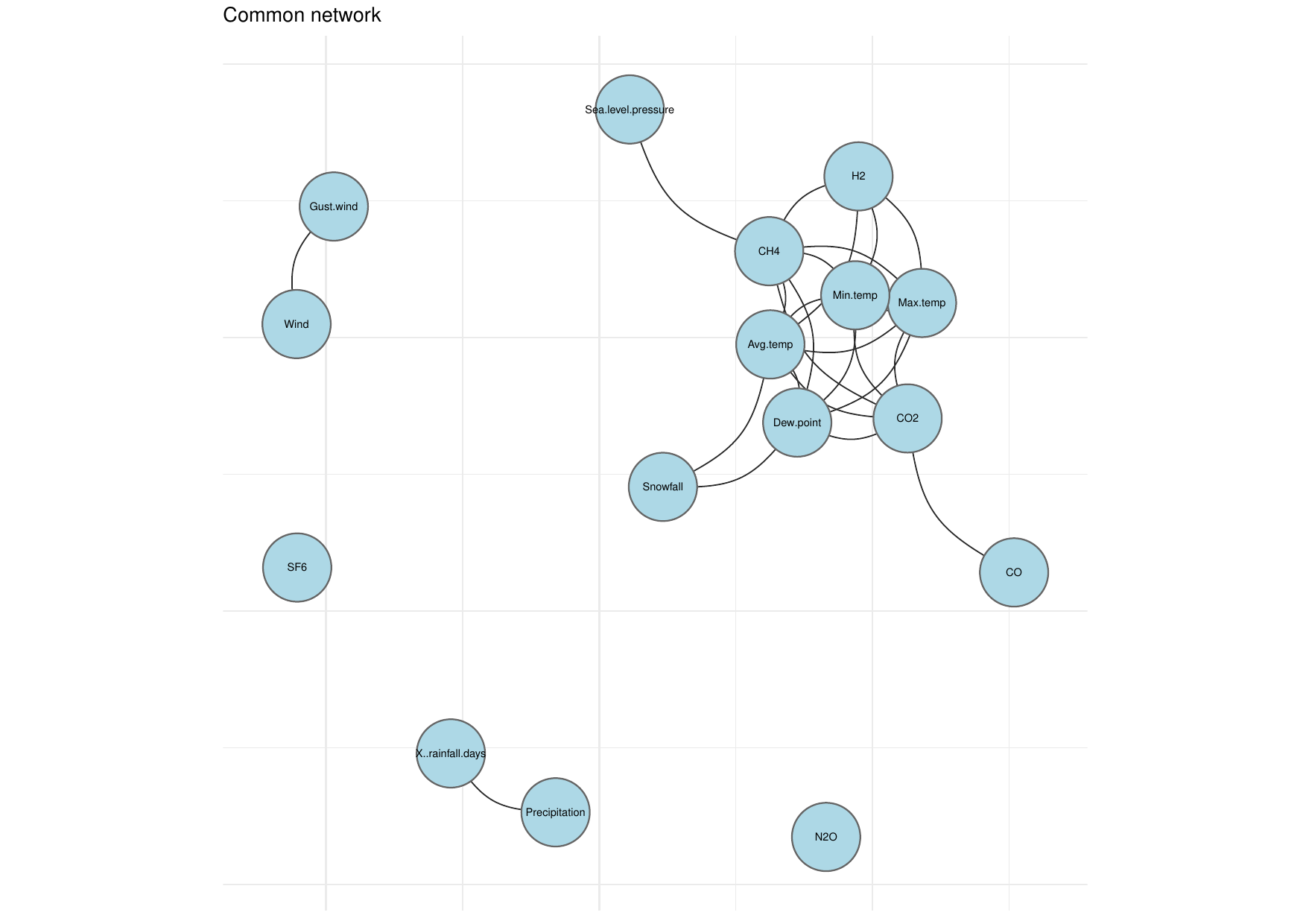}
\par\end{centering}
\label{Figure 20}
\end{figure}

We shall mainly focus on the performance of the estimates over the edges in the common network. We shall analyse the performance of the proportion of the common edges that detects the correct sign in both populations, the average supremum norm distance of the predicted matrices, and the power of the test statistic, if we had done a hypothesis test over some of the edges of the networks. To do this, we first select $N=16,20,25,36$ samples from both populations. Then, for a particular value of $N$, we randomly select $N$ observations from each population without replacement and obtain the precision matrix estimates for each population. Then we check the proportion of the common edges from each precision matrix of the sampled setup that matches the same sign as the proxy predicted matrices, considering the whole population. We repeat this process for $B=100$
times and report the average of this proportion. We also report the average of the supremum norm distance of the sampled precision matrices and the proxy precision matrices. Finally, we report the empirical power of the test at the edges {(Precipitation- Sea level pressure), (Gust wind- Snowfall)}. Also, the power of the test $H_{0}:\Omega_{0,ij}^{Oklahoma}-\Omega_{0,ij}^{Colorado}=0$ increases as the sample size increases. The detailed result is reported in Table \ref{Table 4}. From Table \ref{Table 4} we can see, as the sample size increases, the proportion of the common edges that detects the correct sign increases and the average of the sup-norm distance decreases. Also, the power of the tests of the difference is edges increases as the sample size increases.

\begin{table}
\caption{Analysis of climatic data}

\centering{}%
\begin{tabular}{|c|c|c|c|c|c|}
\hline 
\multicolumn{2}{|c|}{} & $N=16$ & $N=20$ & $N=25$ & $N=33$\tabularnewline
\hline 
\hline 
\multicolumn{2}{|c|}{Correctly signed estimated edges (in percent)} & 3.32 & 15.68 & 35.92 & 70.56\tabularnewline
\hline 
\multirow{2}{*}{Average of sup-norm distance} & Oklahoma & 1.4207 & 0.7753 & 0.5134 & 0.2686\tabularnewline
\cline{2-6} \cline{3-6} \cline{4-6} \cline{5-6} \cline{6-6} 
 & Colorado & 0.7144 & 0.4716 & 0.3344 & 0.1723\tabularnewline
\hline 
\multirow{4}{*}{Power of the test} & Precip- & \multirow{2}{*}{0.89} & \multirow{2}{*}{0.96} & \multirow{2}{*}{1.00} & \multirow{2}{*}{1.00}\tabularnewline
 & SLP &  &  &  & \tabularnewline
\cline{2-6} \cline{3-6} \cline{4-6} \cline{5-6} \cline{6-6} 
 & Gust wind- & \multirow{2}{*}{0.83} & \multirow{2}{*}{0.91} & \multirow{2}{*}{1.00} & \multirow{2}{*}{1.00}\tabularnewline
 & Snowfall &  &  &  & \tabularnewline
\hline 
\end{tabular}
\label{Table 4}
\end{table}

\subsubsection{WebKb dataset}
The dataset contains webpages collected from computer science departments
at Cornell, the University of Wisconsin, the University of Texas, and the University
of Washington. The data has been pre-processed by removing short words
that are less than 3 characters long, and then stopwords were removed.
Further \href{https://tartarus.org/martin/PorterStemmer/}{Porter stemming algorithm} was applied to the remaining words. The dataset and details of pre-processing can be found \href{https://ana.cachopo.org/datasets-for-single-label-text-categorization}{here}. For more details regarding pre-processing of the data, we refer to \cite{2007:phd-Ana-Cardoso-Cachopo}. Originally, the dataset had 7 classes. The classes are student, faculty, staff, department, course, project, and other, having 1641, 1124, 137, 182, 930, 504, and 3764 webpages respectively. Here, we select 2 classes having the most data,i.e., we shall consider student and faculty as our populations. Moreover, we take a subset of the whole data for our analysis. Our data has 1396 webpages (or observations), where the classes student, faculty, course, and project have 544 and 374 observations, respectively. We first obtain the frequency of $j^{th}$ term in the $i^{th}$ webpage, $f_{ij}$,$i=1,2,\ldots,n;j=1,2,\ldots,p$. We then obtain the whole data matrix as $X=(x_{ij})$, where $x_{ij}=e_{j}log(1+f_{ij})$, $e_{j}=1+\sum_{i=1}^{n}p_{ij}log(p_{ij})/log(n)$ is the log-entropy of the $j^{th}$ term and $p_{ij}=f_{ij}/\sum_{i=1}^{n}f_{ij}$ (\cite{dumais1991improving}). This dataset was previously used in \cite{guo2011joint}, where they studied the structure of the networks considering 100 terms with the highest log-entropy weights. We select a subset of 50 terms from this set of 100 terms and carry out our analysis based on them.

The common network of the classes is as in Figure
\ref{Figure 21}.  We studied the proportion of the
common edges that detects the correct sign in both populations,
the average supremum norm distance of the predicted matrices, and
the power of the test statistic over these common edges. Here, we have
$N_{1}=544$ and $N_{2}=374$. We select samples of $n_{i}=\{N_{i}/2,2N_{i}/3,3N_{i}/4,4N_{i}/5,5N_{i}/6,9N_{i}/10\},i=1,2$
without replacement and repeat this for $B=100$ times. As previously,
we shall see that the proportion of the common edges that detects
the correct sign increases, and the average of the sup-norm distance decreases
as the sample size increases. We perform the hypothesis testing of
equality of precision matrix entries at the edge pairs of (Perform-Support),
(Comput-Science) and (Model-Process). It is expected to have disparity
among these edges for the two classes. The power of the test corresponding
to the edges shall increase as the sample size increases. The entire analysis is summarised in Table \ref{Table 5}.
\begin{figure}
\caption{Predicted common network}

\begin{centering}
\includegraphics[scale=0.4]{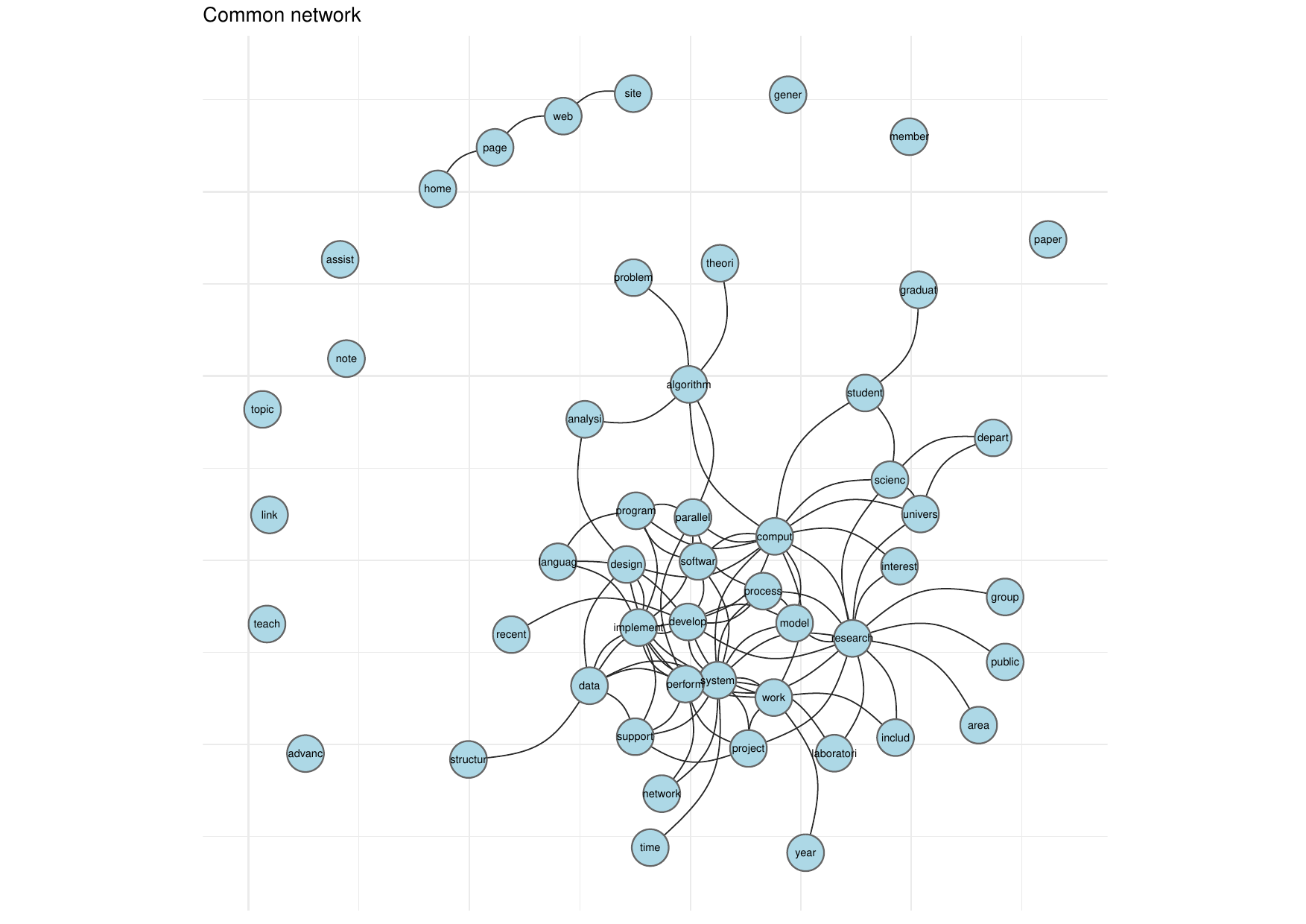}
\par\end{centering}
\label{Figure 21}
\end{figure}
\begin{center}
\begin{table}
\caption{Analysis of WebKb data}

\centering{}%
\begin{tabular}{|c|c|c|c|c|}
\hline 
\multicolumn{2}{|c|}{} & $n_{i}=N_{i}/2$ & $n_{i}=2N_{i}/3$ & $n_{i}=3N_{i}/4$\tabularnewline
\cline{3-5} \cline{4-5} \cline{5-5} 
\multicolumn{2}{|c|}{Correctly signed estimated edges (in percent)} & 25.9 & 42.28 & 55.37\tabularnewline
\cline{3-5} \cline{4-5} \cline{5-5} 
\multicolumn{2}{|c|}{} & $n_{i}=4N_{i}/5$ & $n_{i}=5N_{i}/6$ & $n_{i}=9N_{i}/10$\tabularnewline
\cline{3-5} \cline{4-5} \cline{5-5} 
\multicolumn{2}{|c|}{} & 67.6 & 69.15 & 85.38\tabularnewline
\hline 
\multirow{6}{*}{Average of sup-norm distance} &  & $n_{i}=N_{i}/2$ & $n_{i}=2N_{i}/3$ & $n_{i}=3N_{i}/4$\tabularnewline
\cline{2-5} \cline{3-5} \cline{4-5} \cline{5-5} 
 & Stu & 0.495 & 0.3377 & 0.2492\tabularnewline
\cline{2-5} \cline{3-5} \cline{4-5} \cline{5-5} 
 & Fac & 0.5768 & 0.3179 & 0.2439\tabularnewline
\cline{2-5} \cline{3-5} \cline{4-5} \cline{5-5} 
 &  & $n_{i}=4N_{i}/5$ & $n_{i}=5N_{i}/6$ & $n_{i}=9N_{i}/10$\tabularnewline
\cline{2-5} \cline{3-5} \cline{4-5} \cline{5-5} 
 & Stu & 0.2162 & 0.1923 & 0.1596\tabularnewline
\cline{2-5} \cline{3-5} \cline{4-5} \cline{5-5} 
 & Fac & 0.2159 & 0.1755 & 0.1424\tabularnewline
\hline 
\multirow{18}{*}{Power of the test} &  & $n_{i}=N_{i}/2$ & $n_{i}=2N_{i}/3$ & $n_{i}=3N_{i}/4$\tabularnewline
\cline{2-5} \cline{3-5} \cline{4-5} \cline{5-5} 
 & Perf- & \multirow{2}{*}{0.99} & \multirow{2}{*}{1} & \multirow{2}{*}{1}\tabularnewline
 & Supp &  &  & \tabularnewline
\cline{2-5} \cline{3-5} \cline{4-5} \cline{5-5} 
 &  & $n_{i}=4N_{i}/5$ & $n_{i}=5N_{i}/6$ & $n_{i}=9N_{i}/10$\tabularnewline
\cline{2-5} \cline{3-5} \cline{4-5} \cline{5-5} 
 & Perf- & \multirow{2}{*}{1} & \multirow{2}{*}{1} & \multirow{2}{*}{1}\tabularnewline
 & Supp &  &  & \tabularnewline
\cline{2-5} \cline{3-5} \cline{4-5} \cline{5-5} 
 &  & $n_{i}=N_{i}/2$ & $n_{i}=2N_{i}/3$ & $n_{i}=3N_{i}/4$\tabularnewline
\cline{2-5} \cline{3-5} \cline{4-5} \cline{5-5} 
 & Comput- & \multirow{2}{*}{0.77} & \multirow{2}{*}{0.93} & \multirow{2}{*}{0.99}\tabularnewline
 & Science &  &  & \tabularnewline
\cline{2-5} \cline{3-5} \cline{4-5} \cline{5-5} 
 &  & $n_{i}=4N_{i}/5$ & $n_{i}=5N_{i}/6$ & $n_{i}=9N_{i}/10$\tabularnewline
\cline{2-5} \cline{3-5} \cline{4-5} \cline{5-5} 
 & Comput- & \multirow{2}{*}{1} & \multirow{2}{*}{1} & \multirow{2}{*}{1}\tabularnewline
 & Science &  &  & \tabularnewline
\cline{2-5} \cline{3-5} \cline{4-5} \cline{5-5} 
 &  & $n_{i}=N_{i}/2$ & $n_{i}=2N_{i}/3$ & $n_{i}=3N_{i}/4$\tabularnewline
\cline{2-5} \cline{3-5} \cline{4-5} \cline{5-5} 
 & Model- & \multirow{2}{*}{0.59} & \multirow{2}{*}{0.75} & \multirow{2}{*}{0.85}\tabularnewline
 & Process &  &  & \tabularnewline
\cline{2-5} \cline{3-5} \cline{4-5} \cline{5-5} 
 &  & $n_{i}=4N_{i}/5$ & $n_{i}=5N_{i}/6$ & $n_{i}=9N_{i}/10$\tabularnewline
\cline{2-5} \cline{3-5} \cline{4-5} \cline{5-5} 
 & Model- & \multirow{2}{*}{0.82} & \multirow{2}{*}{0.93} & \multirow{2}{*}{0.94}\tabularnewline
 & Process &  &  & \tabularnewline
\hline 
\end{tabular}
\label{Table 5}
\end{table}
\par\end{center}
\subsubsection{Weekly log returns across companies}

In this example, we started by choosing several businesses from
various industries. The table below provides a list of companies from
several sectors:

\begin{table}[H]
\caption{List of companies from different sectors}

\centering{}%
\begin{tabular}{|c|c|}
\hline 
Sector & Companies\tabularnewline
\hline 
\hline 
Health care & Abbott, Eli Lilly and Company, Johnson and Johnson, Novartis, Pfizer\tabularnewline
\hline 
Semiconductors & AMD, Broadcom, Intel, Nvidia, Qualcomm\tabularnewline
\hline 
\multirow{2}{*}{Financial sector} & Bank of America, Barclays, Citibank, Goldman Sachs, \tabularnewline
\cline{2-2} 
 & HSBC, JP Morgan, Morgan Stanley, Wells Fargo\tabularnewline
\hline 
Automobiles & Ford, General Motors, Tesla\tabularnewline
\hline 
Tech & Amazon, Google, Meta, Yelp\tabularnewline
\hline 
\end{tabular}
\label{Table 6}
\end{table}

For the years 2016, 2019, 2022, and 2024, we gathered the weekly stock
market prices of these firms from \href{https://finance.yahoo.com/}{Yahoo Finance}
. However, we use log returns of the prices, i.e., $log(p_{t}/p_{t-1})$,
because the prices will be dependent over the weeks.
For population 1, we choose the pre-COVID years (2016 and 2019), while
for population 2, we choose the post-COVID years (2022 and 2024).
After that, we standardize the data for both groups. Note that the
companies within sectors should have a significant impact on one another,
but not much on one another. It is anticipated that this behavior
will not change between the two populations. Therefore, we can expect
to acquire a similar sparsity pattern throughout the populations when
working with group graphical lasso. Figures \ref{Figure 22}- \ref{Figure 24} depict the common
and individual networks. The percentage of common edges that correctly
identify the sign in both populations, the average supremum norm distance,
and the test power for edge differences between populations are all
examined as before. For $B=100$, we repeat this approach with samples
of $n_{i}=\{40,60,70,80\},i=1,2$ without replacement. The detailed
result is given in Table \ref{Table 7}. As the sample size grows, the empirical power
of the Google-Meta and Google-Intel edge declines, indicating that
there are no appreciable differences between the populations. Nonetheless,
the empirical power of the edge between Johnson and Johnson-Meta and
AMD-Qualcomm rises, suggesting that the conditional relation of these
edges varies significantly over the population. This suggests that
the conditional dependencies of these pairs of companies significantly
changed due to the COVID pandemic. 

\begin{table}
\caption{Analysis of weekly log returns}

\centering{}%
\begin{tabular}{|c|c|c|c|c|c|}
\hline 
\multicolumn{2}{|c|}{} & $N=40$ & $N=60$ & $N=70$ & $N=80$\tabularnewline
\hline 
\hline 
\multicolumn{2}{|c|}{Correctly signed estimated edges (in percent)} & 32.55 & 55.21 & 65.86 & 76.27\tabularnewline
\hline 
\multirow{2}{*}{Average of sup-norm distance} & \multicolumn{1}{c|}{Pre covid} & 1.0517 & 0.6343 & 0.4908 & 0.3916\tabularnewline
\cline{2-6} \cline{3-6} \cline{4-6} \cline{5-6} \cline{6-6} 
 & \multicolumn{1}{c|}{Post covid} & 0.8195 & 0.497 & 0.3866 & 0.3125\tabularnewline
\hline 
\multirow{4}{*}{Power of the test} & AMD- Qualcomm & 0.29 & 0.46 & 0.64 & 0.65\tabularnewline
\cline{2-6} \cline{3-6} \cline{4-6} \cline{5-6} \cline{6-6} 
 & Google-META & 0.11 & 0.15 & 0.11 & 0.08\tabularnewline
\cline{2-6} \cline{3-6} \cline{4-6} \cline{5-6} \cline{6-6} 
 & Johnson and Johnson-META & 0.71 & 0.97 & 0.97 & 1.00\tabularnewline
\cline{2-6} \cline{3-6} \cline{4-6} \cline{5-6} \cline{6-6} 
 & Google-Intel & 0.1 & 0.07 & 0.08 & 0.06\tabularnewline
\hline 
\end{tabular}
\label{Table 7}
\end{table}
\begin{figure}[H]
\caption{Common network}

\begin{centering}
\includegraphics[scale=0.4]{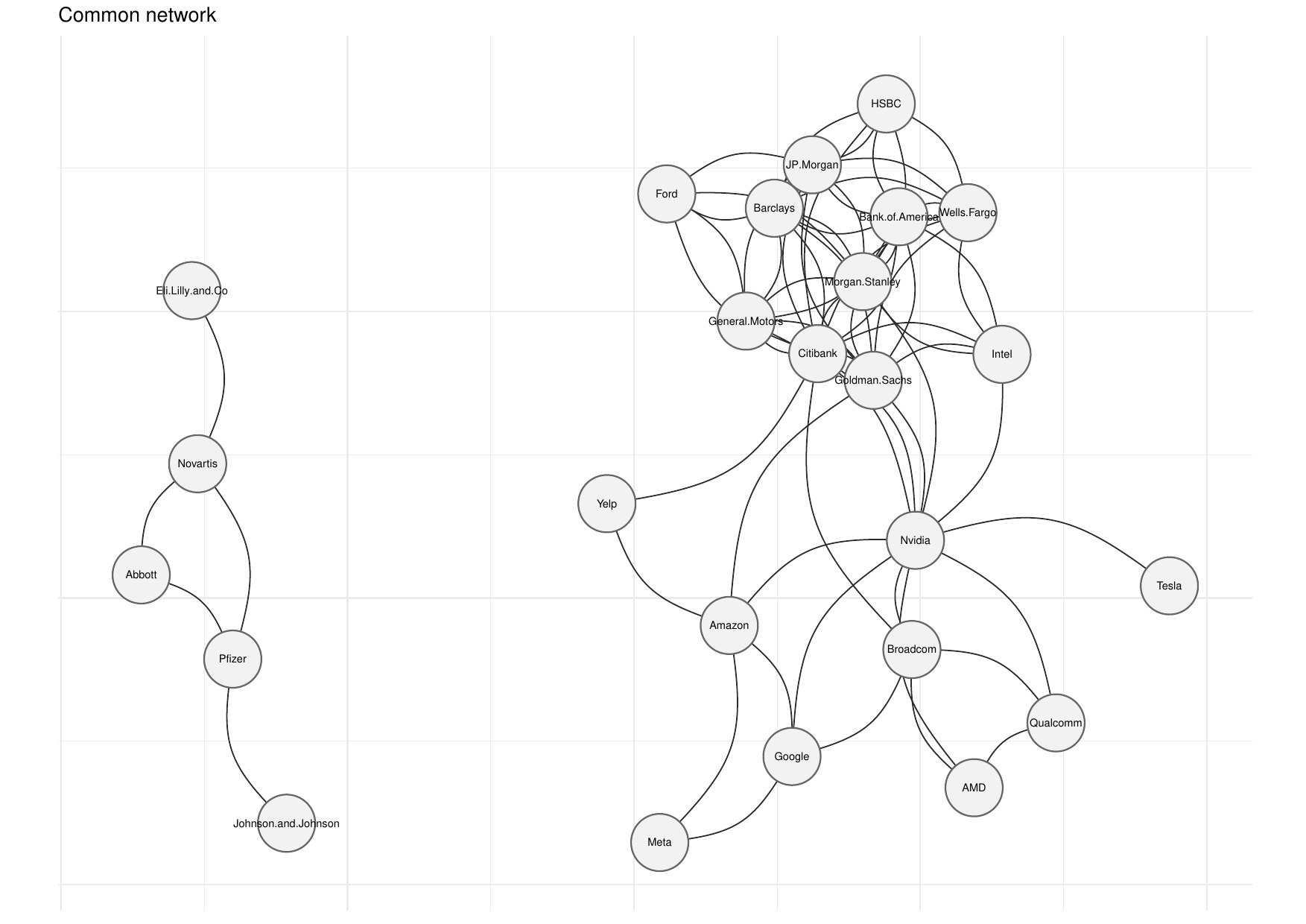}
\par\end{centering}
\label{Figure 22}
\end{figure}
\begin{figure}[H]
\caption{Predicted population 1 network}

\begin{centering}
\includegraphics[scale=0.4]{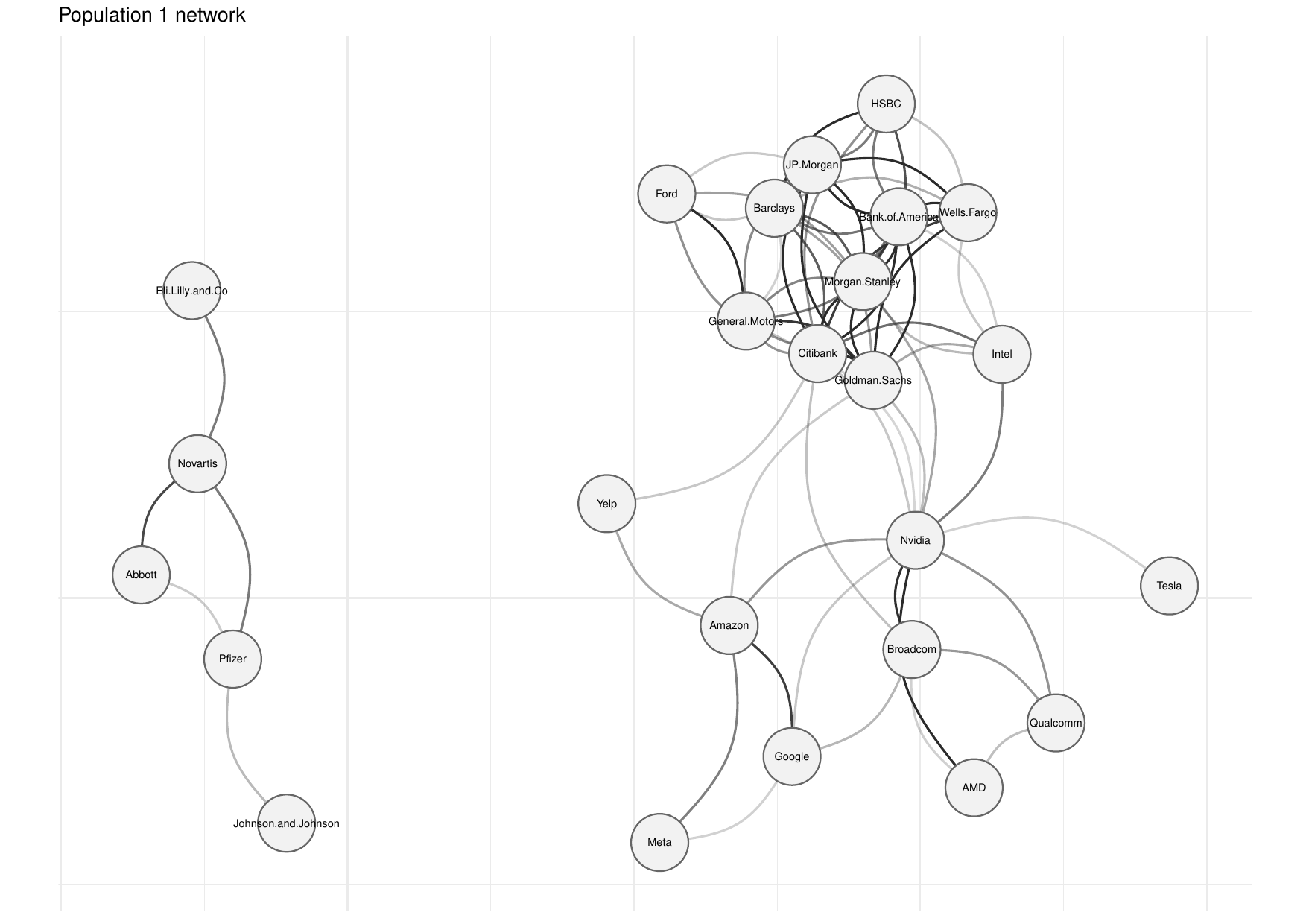}
\par\end{centering}
\label{Figure 23}
\end{figure}
\begin{figure}
\caption{Predicted population 2 network}

\begin{centering}
\includegraphics[scale=0.4]{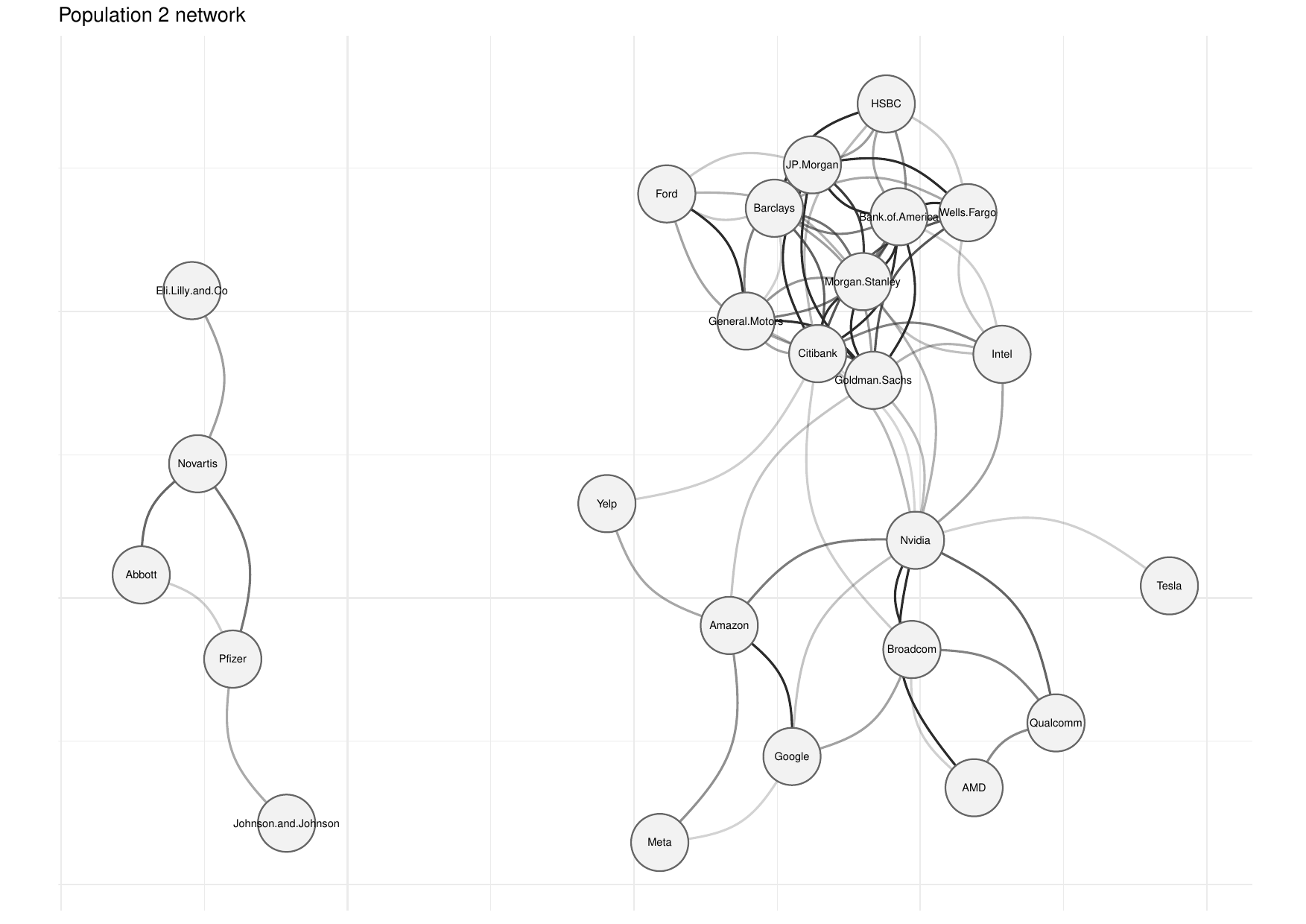}
\par\end{centering}
\label{Figure 24}
\end{figure}
\section{Conclusion}
In this paper, we have introduced a debiased framework for the group graphical lasso estimators to enable statistical inference across multiple population models under a shared sparsity structure. By establishing convergence rates and model consistency under irrepresentability conditions and between-group irrepresentability conditions, we have demonstrated that the debiased estimators achieve asymptotic Gaussianity, facilitating valid hypothesis testing for linear combinations of precision matrix entries across populations, specifically for edges shared with the graphs. We have extended our analysis to moderately high-dimensional regimes where irrepresentability conditions may not hold, showing consistency can still be attained when dimensions grow but at a slower rate than the sample sizes. Simulation studies validated the theoretical results, confirming the debiased estimators' superior performance in finite samples, while applications to real-life datasets highlighted the method's practical utility in uncovering shared dependencies across heterogeneous populations.

In summary, while \cite{danaher2014joint} introduced group graphical lasso for efficient estimation of multiple precision matrices assuming similar sparsity patterns, our work advances this by debiasing the estimates to facilitate rigorous statistical inference. This debiasing approach provides essential tools for hypothesis testing and confidence intervals in high-dimensional settings where sparsity is consistent across populations. By addressing the bias inherent in penalization, we bridge a key gap, enabling practitioners to draw reliable conclusions from multiple Gaussian graphical models beyond mere estimation.
\bibliographystyle{apalike}
\bibliography{References}
\end{document}